\newtheorem{thm}{Theorem}[section]
\newtheorem{prop}[thm]{Proposition}
\newtheorem{lem}[thm]{Lemma}
\newtheorem{cor}[thm]{Corollary}
\newtheorem{conj}[thm]{Conjecture}
\newtheorem{ques}[thm]{Question}
\theoremstyle{definition}
\newtheorem{defn}[thm]{Definition}
\newtheorem{remk}[thm]{Remark}
\newtheorem{remks}[thm]{Remarks}
\newtheorem{exm}[thm]{Example}
\newtheorem{exms}[thm]{Examples}
\newtheorem{notat}[thm]{Notation}
\numberwithin{equation}{section}
\newcommand{\thmref}{Theorem~\ref}
\newcommand{\propref}{Proposition~\ref}
\newcommand{\lemref}{Lemma~\ref}
\newcommand{\sA}{{\mathcal A}}
\newcommand{\sB}{{\mathcal B}}
\newcommand{\sF}{{\mathcal F}}
\newcommand{\sM}{{\mathcal M}}
\newcommand{\sN}{{\mathcal N}}
\newcommand{\sO}{{\mathcal O}}
\newcommand{\sR}{{\mathcal R}}
\newcommand{\sS}{{\mathcal S}}
\newcommand{\sT}{{\mathcal T}}
\newcommand{\sU}{{\mathcal U}}
\newcommand{\sV}{{\mathcal V}}
\newcommand{\A}{{\mathbb A}}
\newcommand{\C}{{\mathbb C}}
\newcommand{\G}{{\mathbb G}}
\newcommand{\Q}{{\mathbb Q}}
\newcommand{\T}{{\mathbb T}}
\newcommand{\Z}{{\mathbb Z}}
\newcommand{\fm}{{\mathfrak m}}
\newcommand{\surj}{\twoheadrightarrow}
\newcommand{\inj}{\hookrightarrow}
\newcommand{\Hom}{{\rm Hom}}
\newcommand{\Spec}{{\rm Spec \,}}
\newcommand{\0}{\emptyset}
\newcommand{\Sch}{{\operatorname{\mathbf{Sch}}}}
\newcommand{\Alg}{{\operatorname{\mathbf{Alg}}}}
\newcommand{\Diag}{{\operatorname{\mathbf{Diag}}}}
\newcommand{\Ab}{{\mathbf{Ab}}}
\newcommand{\Sym}{{\operatorname{\rm Sym}}}
\newcommand{\ds}{{/\kern-3pt/}}
\newcommand{\ov}{\overline}
\newcommand{\wt}{\widetilde}
\newcommand{\tuborg}{\left\{\begin{array}{ll}}
\newcommand{\sluttuborg}{\end{array}\right.}
\begin{document}
\title[Equivariant vector bundles on affine schemes]{Equivariant vector bundles, their derived category and $K$-theory on affine schemes}
\author{Amalendu Krishna, Charanya Ravi}
\address{School of Mathematics, Tata Institute of Fundamental Research,  
1 Homi Bhabha Road, Colaba, Mumbai, India}
\email{amal@math.tifr.res.in, charanya@math.tifr.res.in}

\keywords{Group scheme action, equivariant vector  bundles, equivariant 
$K$-theory}
      
\subjclass[2010]{Primary 13C10; Secondary 14L30}

\begin{abstract}
Let $G$ be an affine group scheme over a noetherian commutative ring $R$.
We show that every $G$-equivariant vector bundle on an affine toric 
scheme over $R$ with $G$-action is equivariantly extended from $\Spec(R)$ for 
several cases of $R$ and $G$.

We show that  given two affine schemes with group scheme actions, an
equivalence of the equivariant derived categories implies isomorphism of the
equivariant $K$-theories as well as equivariant $K'$-theories.
\end{abstract}

\maketitle

\section{Introduction}\label{section:Intro}
The goal of this paper is to answer some well known questions related to group 
scheme actions on affine schemes over a fixed affine base scheme.
Our particular interest is to explore when are the equivariant vector bundles 
on such schemes equivariantly trivial and when does an equivalence of their 
derived categories imply homotopy equivalence of the equivariant $K$-theory.
Both questions have been extensively studied and are now satisfactorily 
answered in the non-equivariant case 
(see \cite{Lindel}, \cite{Rickard}, \cite{DS}).


\subsection{Equivariant Bass-Quillen question}\label{section:BQ}
The starting point for the first question is the following classical problem 
from \cite[Problem IX]{Bass}.

\begin{conj}$($Bass-Quillen$)$\label{conj:BQ}
Let $R$ be a regular commutative noetherian ring of finite Krull dimension. 
Then every finitely generated projective module over the polynomial ring
$R[x_1, \cdots , x_n]$ is extended from $R$.
\end{conj}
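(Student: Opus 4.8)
The plan is to follow the now-classical strategy of Quillen and Suslin, refined by Lindel, reducing the statement to a one-variable, local assertion where the regularity of $R$ can be exploited. First I would invoke \emph{Quillen patching}: for a finitely generated projective $R[x_1,\dots,x_n]$-module $P$, the set of $f \in R$ for which the localization $P_f$ is extended from $R_f$ forms an ideal, so $P$ is extended from $R$ as soon as $P_{\fm}$ is extended from $R_{\fm}$ for every maximal ideal $\fm$. This lets me assume $R$ is local. Next I would set up an induction on the number $n$ of variables: since $R[x_1,\dots,x_{n-1}]$ is again regular, noetherian, and of finite Krull dimension whenever $R$ is, the inductive step reduces everything to the single-variable case, namely showing that a finitely generated projective module over $R[x]$, with $R$ regular local, is free.

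For the one-variable case I would separate the input into two pieces. The first is the \emph{affine Horrocks theorem}: over a local ring $R$, a finitely generated projective $R[x]$-module whose localization $P[1/x]$ is free is itself free; this is the mechanism that upgrades a generic triviality to a genuine one. The second, and the real content for regular $R$, is \emph{Lindel's device}: a regular local ring essentially of finite type over a field (with suitably separable residue field) admits an étale, residually trivial neighborhood identifying the problem with one over a localized polynomial ring $k[y_1,\dots,y_d]$, where the Quillen--Suslin theorem applies. A dévissage transporting extendedness across this étale map then yields freeness. To pass from rings essentially of finite type over a field to arbitrary regular rings containing a field, I would feed in \emph{Popescu's general Néron desingularization}, writing the regular local ring as a filtered colimit of smooth algebras over the prime field and checking that extendedness of finitely presented modules is preserved under such colimits.

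The main obstacle is precisely the step that lifts the equicharacteristic argument to all regular local rings: Lindel's étale-neighborhood reduction and Popescu desingularization are available when $R$ contains a field, but in mixed characteristic there is no known analogue bringing an arbitrary unramified regular local ring into the range of Quillen--Suslin. I therefore expect the argument to be complete for $R$ containing a field and to remain genuinely open otherwise; a proof of the full statement would have to supply a new desingularization or approximation input in the mixed-characteristic, ramified case, which is where the difficulty is concentrated.
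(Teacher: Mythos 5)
This statement is labeled as a \emph{conjecture} in the paper, and the paper supplies no proof of it: the authors only record that Lindel (building on Quillen and Suslin) settled it when $R$ is essentially of finite type over a field, and that the general case ``is still unknown.'' Your proposal is therefore not in competition with any argument in the paper; it is an accurate reconstruction of the known partial results. The steps you list --- Quillen patching to localize, induction on the number of variables, affine Horrocks in one variable, Lindel's \'etale-neighborhood reduction to the Quillen--Suslin range, and Popescu's d\'esingularization to pass from rings essentially of finite type over a field to arbitrary regular local rings containing a field --- are exactly the ingredients of the strongest results currently available, and your diagnosis that the genuinely open part is the mixed-characteristic (especially ramified) case agrees with the paper's own description of the state of the art. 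In short: you have correctly identified that this is an open problem rather than a theorem, and your outline is a faithful account of why the equicharacteristic case is settled and the general case is not. No gap to report beyond the one you have already, correctly, flagged as the open problem itself.
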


The most complete answer to this conjecture was given by Lindel \cite{Lindel}, 
who showed (based on the earlier solutions
by Quillen and Suslin when $R$ is a field) that the above  conjecture has an 
affirmative solution when $R$ is essentially of finite type over a field. For 
regular rings which are not of this type, some cases have been  solved 
(e.g., see \cite{Rao}), but
the complete answer is still unknown. In this paper, we are interested in the 
equivariant version of this conjecture which can be loosely phrased as follows.

Let $R$ be a noetherian regular ring and let $G$ be a flat affine group scheme 
over $R$. Let $A = R[x_1, \cdots , x_n]$ be a polynomial $R$-algebra
with a linear $G$-action and let $P$ be a finitely generated $G$-equivariant 
projective $A$-module. The equivariant version of the above conjecture asks:
\begin{ques}\label{ques:EBQ}
Is $P$ an equivariant extension of a $G$-equivariant projective 
module over $R$?
\end{ques}

The equivariant Bass-Quillen question was studied  by authors like Knop, 
Kraft-Schwarz and others (e.g., see \cite{Knop}, \cite{Kraft}, \cite{Kraft-1}
and \cite{MJP}) when $R = \C$ is the field of complex numbers. 
This question is known to be very closely related to the 
linearization problem for reductive group action on affine spaces.

The first breakthrough was achieved by Knop \cite{Knop}, who found 
counter-examples to this question when 
$G$ is a non-abelian reductive group over $\C$. In fact, he showed that
every connected reductive non-abelian group over $\C$ admits a linear action 
on a polynomial ring for which the equivariant Bass-Quillen conjecture fails.
Later, such counter-examples were found by Masuda and Petrie \cite{MP} when 
$G$ is a finite non-abelian group.  Thus  the only hope to 
prove this conjecture is when $G$ is diagonalizable. 
It was subsequently shown by Masuda, Moser-Jauslin and Petrie \cite{MJP} that 
the equivariant
Bass-Quillen conjecture indeed has  a positive solution when $R = \C$ and $G$ 
is diagonalizable. This was independently  shown also by
Kraft and Schwarz \cite{Kraft-1}.

It is not yet known if the equivariant
Bass-Quillen conjecture has a positive solution over any field
other than $\C$.
One of the two goals of this paper is to solve the general case of the
equivariant Bass-Quillen question for diagonalizable group schemes over 
arbitrary ring or field.
Our approach to solve this problem in fact allows us to 
prove a stronger assertion that
such a phenomenon holds over all affine toric schemes over an affine base.
This approach was motivated by a similar result of Masuda \cite{Masuda}
over the field of complex numbers.

\enlargethispage{25pt}

Let $R$ be a commutative noetherian ring.
Recall from \cite[Expos{\'e}~8]{SGA2} that an affine group scheme $G$ over $R$
is called {\sl diagonalizable}, if there is a finitely generated abelian
group $P$ such that $G = \Spec(R[P])$, where $R[P]$ is the group algebra
of $P$ over $R$.

Let $L$ be a lattice and let $\sigma \subseteq L_{\Q}$ be
a strongly convex, polyhedral, rational cone. Let $\Delta$ denote the set of 
all faces of $\sigma$. Let $A = R[\sigma \cap L]$
be the monoid algebra over $R$. Let $\psi: L \to P$ be a homomorphism
which makes $\Spec(A)$ a scheme with $G$-action. Let $A^G$ denote the
subring of $G$-invariant elements in $A$. Let us assume that 
every finitely generated projective module over $R[Q]$ is extended from 
$R$ if $Q$ is torsion-free (see \thmref{thm:Free}).

Our main result can now be stated as follows 
(see Theorem~\ref{Thm: Main thm 1}). The underlying terms and notations can be 
found in the body of this text.

\begin{thm} \label{thm:Intro-0}
Let $R$ and $A$ be as above. 
Assume that all finitely generated projective modules over $A_{\tau}$ and 
$(A_{\tau})^G$ are extended from $R$ for every $\tau \in \Delta$. 
Then every finitely generated $G$-equivariant projective $A$-module is 
equivariantly extended from $R$. 
\end{thm}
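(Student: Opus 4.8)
The plan is to rephrase the assertion in terms of graded modules and then to prove that a single canonical comparison map is an isomorphism, by induction on $\dim\sigma$. Since $G=\Spec(R[P])$ is diagonalizable, giving $A$ a $G$-action is the same as giving it a $P$-grading, with $\psi$ placing $\chi^m$ in degree $\psi(m)$; a $G$-equivariant module is then a $P$-graded module, $A^G=A_0$, and ``equivariantly extended from $R$'' means isomorphic, as a graded $A$-module, to $A\otimes_R V$ for a finitely generated projective $P$-graded $R$-module (i.e. $G$-representation) $V$. The augmentation ideal $\fm=\bigoplus_{0\neq m\in\sigma\cap L}R\chi^m$ is homogeneous, hence $G$-stable, with $A/\fm=R$ carrying the trivial action. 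Given a finitely generated $G$-equivariant projective $A$-module $M$, I would set $V:=M/\fm M$, a finitely generated $P$-graded $R$-module. Using that a finitely generated graded projective module is a graded direct summand of a finitely generated graded free module, $V$ is graded-projective over $R$ and the graded surjection $M\to V$ admits a graded $R$-linear section $s$; this produces a graded $A$-linear comparison map $\eta\colon A\otimes_R V\to M$, $a\otimes v\mapsto a\,s(v)$, satisfying $\eta\bmod\fm=\id_V$. The theorem is then equivalent to the statement that $\eta$ is an isomorphism, and $A\otimes_R V$ is the desired equivariant extension.

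Next, to control $\eta$ away from the closed subscheme $Z:=\Spec(A/\fm)\cong\Spec R$, I would show $\eta$ is an isomorphism on $\Spec A\setminus Z$ by induction on $\dim\sigma$ (the case $\dim\sigma=0$ being trivial). For a nonzero face $\tau$, inverting the monomials of $\tau$ gives an isomorphism $A_\tau\cong R'[\bar\sigma\cap\bar L]$, where $R'=R[\Z^{\dim\tau}]$ is a Laurent extension of $R$ and $\bar\sigma\subseteq L_\Q/\mathrm{span}(\tau)$ is strongly convex of dimension $<\dim\sigma$; moreover the opens $\Spec A_\tau$ with $\tau\neq\{0\}$ cover $\Spec A\setminus Z$. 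The faces of $\bar\sigma$ are exactly the faces $\tau'\succeq\tau$ of $\sigma$, with $(A_\tau)_{\tau'}=A_{\tau'}$ and (localization commuting with invariants) $((A_\tau)_{\tau'})^G=(A_{\tau'})^G$. Since finitely generated projectives over $A_{\tau'}$ and over $(A_{\tau'})^G$ are extended from $R$ by hypothesis, they are a fortiori extended from $R'$, so both families of hypotheses hold for the pair $(\bar\sigma,R')$; the standing assumption \thmref{thm:Free} is what underlies the extremal faces, where the relevant monoid is a free abelian group. Applying the inductive hypothesis to $(\bar\sigma,R')$ then shows the localized map $\eta_\tau$ is an isomorphism for every nonzero $\tau$, hence $\eta$ is an isomorphism over $\Spec A\setminus Z$.

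To cross the closed point, let $C:=\cok\eta$. From $\eta\bmod\fm=\id$ we get $C=\fm C$, and from the previous paragraph $C$ is supported on $Z$, so finite generation gives $\fm^N C=0$ for some $N$; therefore $C=\fm C=\cdots=\fm^N C=0$ and $\eta$ is surjective. Since $M$ is projective, $\eta$ splits and $K:=\ker\eta$ is a finitely generated projective $A$-module that vanishes on the dense open $\Spec A\setminus Z$ (here $\dim\sigma\geq 1$, so $Z$ is nowhere dense). As the rank of a finitely generated projective module is locally constant and vanishes on a dense open, it vanishes identically, whence $K=0$. Thus $\eta$ is an isomorphism and $M\cong A\otimes_R V$ equivariantly.

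The hard part, and the place where the argument needs the most care, is the inductive step: localizing at a nonzero face twists the base from $R$ to $R'=R[\Z^{\dim\tau}]$, which now carries a possibly nontrivial $G$-action through $\psi|_\tau$. The induction must therefore be set up for the more general situation of a diagonalizable action over a base equipped with its own action, and one must verify that \emph{both} families of hypotheses---over the localizations $A_{\tau'}$ and over the invariant subrings $(A_{\tau'})^G$---descend to this twisted base. This is precisely why the theorem assumes extension over the invariant subrings and not merely over the $A_\tau$, and it is the step where the equivariant structure interacts most delicately with the toric combinatorics.
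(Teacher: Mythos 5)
Your global skeleton --- produce a comparison map $\eta\colon A\otimes_R V\to M$ with $V=M/\fm M$, show it is an isomorphism off the vertex $Z=V(\fm)$, then kill $\cok\eta$ by Nakayama and $\ker\eta$ by local constancy of rank --- is internally coherent, but the inductive step that is supposed to supply the isomorphism away from $Z$ has two genuine gaps. First, the statement you induct on is not stable under the induction. ``$M$ is equivariantly extended'' does \emph{not} imply that your map $\eta$ is an isomorphism: take $P=0$, $A=R[x]$, $M=A$; then $V=R$, the section $1\mapsto 1+x$ is a perfectly legal graded section, and $\eta$ is multiplication by $1+x$, which is not invertible even though $M$ is trivially extended. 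So invoking the inductive hypothesis for $(\bar\sigma,R')$ only tells you that $M_\tau$ is extended from $R'$ by \emph{some} isomorphism; it does not tell you that the specific localized map $\eta_\tau$ is one, and your claimed equivalence between the theorem and ``$\eta$ is an isomorphism'' fails in exactly the direction the induction needs. Second, you apply the hypothesis to the wrong rings. In the paper $A_\tau$ denotes the face subalgebra $R[\tau\cap L]$, a \emph{quotient} of $A$ (the closed stratum $X_\tau\inj X_\sigma$), whereas your induction runs over the \emph{localizations} $A[e_m^{-1}]\cong R[\Z^{\dim\tau}][\bar\sigma\cap\bar L]$: for $\sigma=\Q_+^2$ the hypothesis covers $R$, $R[x_1]$, $R[x_2]$, $R[x_1,x_2]$ and their invariants, while you need $R[x_1^{\pm1},x_2]$ and the like. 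Extension of projectives does not pass from a ring to its localizations for free --- this is precisely why the paper's Laurent-polynomial statement requires $R$ to be a PID rather than merely satisfying $(\dagger\dagger)$. Relatedly, your argument never actually \emph{uses} the hypothesis on the invariant subrings $(A_\tau)^G$, nor the standing condition $(\dagger)$ on $R$; these are where the paper does its real work. (A smaller issue: the paper's ``strongly convex'' only means $\sigma$ spans $L_\Q$, so $\sigma$ may contain lines, in which case $\fm=A$ and $V=M/\fm M=0$; your setup silently assumes the cone is pointed.)

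For contrast, the paper also inducts on $\dim\sigma$, but through the closed strata rather than the open star-localizations: it trivializes $E$ on the union $Y$ of the codimension-one faces by an inductive gluing along the $G$-equivariant retractions $X_\sigma\to X_\tau$, extends that trivialization to an invariant neighborhood $\Spec(A_h)$ with $h\in A^G$, separately trivializes $E$ on the big torus $T_\sigma$ using that the action there is free together with the hypothesis on the invariant rings (Corollary~\ref{cor:H subgp-triv-act}), and then patches the two trivializations by descending the transition automorphism to the quotient $\Spec(A^G)$ (Lemma~\ref{lem:descent-of-auto}), using that projectives over $A^G$ are extended from $R$, plus a Quillen-patching-style matrix manipulation to push the discrepancy into high powers of the ideal of $Y$. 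Your Nakayama-at-the-vertex device is an appealing substitute for that patching, but as written it only relocates, without discharging, the obligation that the paper meets with the descent to $\Spec(A^G)$; to salvage your route you would need to strengthen the inductive statement to one about the comparison map, verify it localizes correctly, and add hypotheses (or arguments) covering the Laurent-type localizations and their invariants.
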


For examples of rings satisfying the hypothesis of the theorem, see
sections~\ref{section:mon-alg}, ~\ref{section:toric-sch} and 
~\ref{section:equiv-vb-tor.sch}.

Let us now assume that $R$ is either a PID, or a regular local ring of 
dimension at most 2, or a regular local ring containing a field.
As a consequence of the above theorem, we obtain the
following solution to the equivariant Bass-Quillen question.

\begin{thm}\label{thm:Intro-1}
Let $R$ be as above and let $G$ be a diagonalizable group scheme over 
$R$ acting linearly on a polynomial algebra
$R[x_1, \cdots ,x_n, y_1, \cdots , y_r]$.
Then the following hold.
\begin{enumerate}
\item
If $A = R[x_1, \cdots , x_n]$, then every finitely generated  equivariant 
projective $A$-module is equivariantly extended from $R$. 
\item
If $R$ is a PID and $A = R[x_1, \cdots , x_n, y^{\pm 1}_1, \cdots , y^{\pm 1}_r]$,
then every finitely generated equivariant projective $A$-module is 
equivariantly extended from $R$. 
\end{enumerate}
\end{thm}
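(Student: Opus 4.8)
The plan is to deduce both parts from \thmref{thm:Intro-0} by realizing each $A$ as a monoid algebra $R[\sigma \cap L]$ and then checking its face hypothesis. For part (1) I take $L = \Z^n$ and $\sigma = \R_{\ge 0}^n$, the first orthant, which is strongly convex, rational and polyhedral, with $\sigma \cap L = \N^n$ and $R[\sigma \cap L] = R[x_1,\dots,x_n] = A$. A linear action of the diagonalizable group $G = \Spec(R[P])$ is the same datum as a $P$-grading of $A$, and linearity means the grading is determined by the degrees $p_i = \psi(e_i) \in P$ of the variables; this is the homomorphism $\psi : L \to P$ of the setup, making $\Spec(A)$ a $G$-scheme. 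By \thmref{thm:Intro-0} it then suffices to prove that for every face $\tau \in \Delta$ all finitely generated projective modules over $A_\tau$ and over $(A_\tau)^G$ are extended from $R$.

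The rings $A_\tau$ are easy to identify: a face of the orthant is cut out by setting a subset of coordinates to zero, and localizing along it inverts the remaining monomials, so each $A_\tau$ is a mixed ring $R[x_i : i \in S][x_j^{\pm 1} : j \notin S]$ for some $S \subseteq \{1,\dots,n\}$ (the extreme case $S = \emptyset$ giving the full Laurent ring $R[\Z^n]$). For $R$ a PID, a regular local ring of dimension $\le 2$, or a regular local ring containing a field, extension from $R$ for such rings follows by combining the solved cases of the Bass-Quillen conjecture recorded in the introduction with the group-algebra statement \thmref{thm:Free} for the torsion-free groups $\Z^{\,n-|S|}$ coming from the inverted variables.

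The essential step is the invariant subrings. Writing $\bar\psi : \Z^n \to P$ for the extension of $\psi$, the ring $(A_\tau)^G$ is spanned by the monomials of degree $0$, hence equals $R[M_\tau]$ with $M_\tau = (\N^{S} \times \Z^{S^c}) \cap \ker(\bar\psi)$ an affine submonoid of $\Z^n$; being a submonoid of a free abelian group it is cancellative and torsion-free, and its groupification is again free. The main obstacle is to show that extension from $R$ holds for all of these monoid algebras. I would settle this by appeal to the non-equivariant extension theorems for affine monoid algebras proved in sections~\ref{section:mon-alg} and \ref{section:toric-sch}, whose inductive reduction to the polynomial and group-algebra cases ultimately rests on \thmref{thm:Free}; the real work is to verify that each $M_\tau$ satisfies the hypotheses (normality or seminormality together with freeness of the groupification) needed for those theorems to apply uniformly across the three classes of $R$. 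Granting this, \thmref{thm:Intro-0} gives part (1).

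For part (2) the monoid is $\N^n \times \Z^r$, whose group of units $\{0\} \times \Z^r$ is nontrivial; equivalently the cone $\R_{\ge 0}^n \times \R^r$ has a lineality space and is not strongly convex, so \thmref{thm:Intro-0} does not apply on the nose. I would therefore first split off the torus factor $R[\Z^r]$ and run the face analysis of the two previous paragraphs on the strongly convex part, reassembling equivariantly over the torus directions. This reassembly over the unit directions, and the extension statements for the monoid algebras that now carry a full lattice of units, are where the argument requires more than a general regular base; they are available in the form we need only when $R$ is a PID, which accounts for the restriction imposed in (2).
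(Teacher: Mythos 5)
Your overall architecture is the paper's: realize $\Spec(A)$ as an affine $G$-toric scheme (this is Example~\ref{exm:Linear-action}) and feed it to Theorem~\ref{thm:Intro-0} (= Theorem~\ref{Thm: Main thm 1}), reducing everything to the non-equivariant extension property for certain monoid algebras. But there is a concrete error in your identification of the face rings. In the paper's convention (\S~\ref{subsection:tor-sch}) the scheme $X_\tau$ attached to a face $\tau$ of $\sigma$ is the \emph{closed} toric subscheme cut out by the monomials outside $\tau$, so $A_\tau = R[\tau\cap L]$ is a \emph{quotient} of $A$, not a localization. For the orthant these are the polynomial subrings $R[x_i : i\in S]$, not the mixed rings $R[x_i : i\in S][x_j^{\pm 1} : j\notin S]$ you describe. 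This matters: with your (incorrect) $A_\tau$ the invariant monoids $M_\tau$ acquire nontrivial units from the $\Z^{S^c}$ factor, and the extension property for such monoid algebras is exactly the point where the paper must retreat to the PID case (Gubeladze's theorem, Corollary~\ref{cor:Free-G-inv-L}); your version of the hypothesis would therefore force the PID restriction already in part (1), where the theorem is claimed for all three classes of $R$. With the correct $A_\tau$, the kernel monoid $\ker(\psi)\cap(\sigma\cap L)_\tau$ has no nontrivial units, is cancellative, torsion-free and normal --- this inheritance is precisely Proposition~\ref{prop:G-inv}, and it is the ``real work'' you defer; combined with condition $(\dagger\dagger)$ of Theorem~\ref{thm:Free} it gives Corollary~\ref{cor:Free-G-inv}, which is how the paper verifies the hypothesis of Theorem~\ref{Thm: Main thm 1} for part (1). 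You should actually carry out that verification rather than gesture at ``extension theorems in sections~\ref{section:mon-alg} and \ref{section:toric-sch}''; there is no separate inductive extension theorem there beyond $(\dagger\dagger)$ itself.

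For part (2), your detour is unnecessary. The paper defines ``strongly convex'' nonstandardly as ``spans $L_\Q$'' (\S~\ref{subsection:tor-sch}), so the cone $\Q_+^n\oplus\Q^r$ is admissible and Theorem~\ref{Thm: Main thm 1} applies on the nose; the lineality space is handled \emph{inside} its proof (the minimal face $\tilde\tau$ is the torus factor, trivialized via Corollary~\ref{cor:H subgp-triv-act}), and the nontrivial units in the face monoids are handled by Gubeladze's PID theorem via Corollary~\ref{cor:Free-G-inv-L}, yielding Corollary~\ref{cor:Main thm 1-0}. Your proposed external splitting-off of $R[\Z^r]$ followed by an equivariant ``reassembly'' is left entirely unspecified, and it essentially duplicates the descent-and-patching machinery already packaged in the main theorem. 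The correct reading is that part (2) is a direct special case of Corollary~\ref{cor:Main thm 1-0}, with the PID hypothesis entering only through the units in the invariant monoids, exactly as your last sentence suspects.
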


This theorem is generalized to the case of non-local regular rings in 
Theorem~\ref{thm:Main-3}.
We note here that previously, it was not even known whether every
$G$-equivariant bundle on a polynomial ring over $R$ is `stably'
extended from $R$. 

\vskip .3cm

The above results were motivated in part by the following important
classification problem for equivariant vector bundles over smooth affine 
schemes.
One of the most notable (among many) recent applications of
the non-equivariant Bass-Quillen conjecture
is Morel's classification
of vector bundles over smooth affine schemes (see \cite[Theorem~8.1]{Morel}).
He showed using Lindel's theorem \cite{Lindel} that all isomorphism classes of
rank $n$ vector bundles on a smooth 
affine scheme $X$ over a field $k$ are in bijection with the set of 
$\A^1$-homotopy classes of maps from $X$ to the classifying space of $GL_{n,k}$.
It is important to note here that even though Morel's final result is over a
field, its proof crucially depends on Lindel's theorem for 
geometric regular local rings. 

The equivariant version of the Morel-Voevodsky $\A^1$-homotopy category
was constructed in \cite{HKO}. One can make sense of the
equivariant classifying space in this category, analogous to the one in the 
topological setting \cite{May}. 
The equivariant analogue (\thmref{thm:Intro-1}) of Lindel's theorem now 
completes one very important step in solving the classification problem for
equivariant vector bundles. It remains to see how one can 
use \thmref{thm:Intro-1} to complete the proof of the equivariant version of 
Morel's classification theorem. This will be 
taken up elsewhere.



\subsection{Equivariant derived category and $K$-theory}
\label{section:DG-K-theory}
We now turn to the second question. 
To motivate this, recall that it is a classical question in algebraic 
$K$-theory to determine  if it is possible that  two schemes with equivalent 
derived categories of quasi-coherent sheaves  (or vector bundles)  have 
(homotopy) equivalent algebraic $K$-theories. 
This question gained prominence when Thomason and Trobaugh \cite{TT} showed 
that the equivalence of $K$-theories is true, if the given equivalence 
of derived categories is induced by a  morphism between the  underlying schemes.
There has been no improvement of this result for the general case of schemes 
till date. 

However, Dugger and Shipley \cite{DS} (see also \cite{Rickard}) showed a 
remarkable improvement over the result of Thomason and Trobaugh for affine 
schemes. They showed more generally that any two (possibly non-commutative) 
noetherian rings with equivalent derived categories (which may not be
induced by a map of rings!) have equivalent  $K$-theories.

Parallel to the equivariant analogue of the Bass-Quillen question, one can now 
ask if it is true that two affine schemes with group scheme actions
have equivalent equivariant $K$-theories if their equivariant derived  
categories are equivalent. No case of this problem has been known yet.

In this paper, we show that the general results of Dugger and Shipley \cite{DS} 
apply in the equivariant set-up too and this allows us to solve the 
above question. More precisely, we combine the results of 
Dugger and Shipley \cite{DS} and \propref{prop:Freyd-I} to prove 
the following theorem.

Let $R$ be a commutative noetherian ring and let $G$ be an affine group scheme 
over $R$. Assume that either $G$ is diagonalizable or $R$ contains
a field of characteristic zero and $G$ is a split reductive group scheme over 
$R$. Given a finitely generated $R$-algebra $A$ with $G$-action,
let us denote this datum by $(R, G, A)$. 
Let $D^G(A)$ and $D^G({proj}/A)$ denote the derived categories of 
$G$-equivariant $A$-modules and $G$-equivariant (finitely generated) projective
$A$-modules, respectively. Let $K^G(A)$ and $K'_G(A)$ denote the $K$-theory 
spectra of $G$-equivariant (finitely generated) projective $A$-modules
and $G$-equivariant $A$-modules, respectively.

\begin{thm}\label{thm:Intro-2}
Let $(R_1, G_1, A_1)$ and $(R_2, G_2, A_2)$ be two data of above type. Then 
$D^{G_1}(A_1)$ and $D^{G_2}(A_2)$ are equivalent as triangulated categories if and
only if $D^{G_1}({proj}/{A_1})$ and $D^{G_2}({proj}/{A_2})$ are equivalent as 
triangulated categories.

In either case, there are homotopy equivalences of spectra $K^{G_1}(A_1) \simeq 
K^{G_2}(A_2)$ and $K'_{G_1}(A_1) \simeq K'_{G_2}(A_2)$.
\end{thm}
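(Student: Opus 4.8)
The plan is to reduce the entire statement to the non-equivariant assertions of Dugger and Shipley by means of \propref{prop:Freyd-I}. Under the standing hypotheses on $(R_i,G_i)$ (diagonalizability, or split reductivity over a field of characteristic zero), the category of $G_i$-representations is well-behaved enough that the category of $G_i$-equivariant $A_i$-modules carries a compact projective generator; \propref{prop:Freyd-I} packages this into a (possibly non-commutative) \emph{noetherian} ring $\Lambda_i$ together with an equivalence of abelian categories between the $G_i$-equivariant $A_i$-modules and the $\Lambda_i$-modules, matching $G_i$-equivariant finitely generated projective modules with finitely generated projective $\Lambda_i$-modules, and $G_i$-equivariant finitely generated modules with finitely generated $\Lambda_i$-modules. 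Passing to derived categories and $K$-theory, this identifies $D^{G_i}(A_i)$ with the unbounded derived category $\D(\Lambda_i)$, identifies $D^{G_i}(\text{proj}/A_i)$ with the category $\operatorname{Perf}(\Lambda_i)$ of perfect complexes, and yields homotopy equivalences $K^{G_i}(A_i)\simeq K(\Lambda_i)$ and $K'_{G_i}(A_i)\simeq K'(\Lambda_i)$, the latter using that $\Lambda_i$ is noetherian so that $K'(\Lambda_i)$ is the $K$-theory of finitely generated $\Lambda_i$-modules. It therefore suffices to prove both assertions with the triples replaced by the rings $\Lambda_1,\Lambda_2$.

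For the first assertion I would argue through compact objects. The perfect complexes are exactly the compact objects of $\D(\Lambda_i)$, and since compactness is intrinsic to a triangulated category, any triangulated equivalence $\D(\Lambda_1)\xrightarrow{\sim}\D(\Lambda_2)$ restricts to an equivalence $\operatorname{Perf}(\Lambda_1)\xrightarrow{\sim}\operatorname{Perf}(\Lambda_2)$; this is the forward implication. For the converse, I would use that $\D(\Lambda_i)$ is compactly generated with $\operatorname{Perf}(\Lambda_i)$ as its subcategory of compact objects, and that both categories carry canonical differential graded enhancements for which $\D(\Lambda_i)\simeq\operatorname{Ind}(\operatorname{Perf}(\Lambda_i))$. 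Hence, by the recognition results for compactly generated categories (Keller, Schwede--Shipley), an equivalence $\operatorname{Perf}(\Lambda_1)\xrightarrow{\sim}\operatorname{Perf}(\Lambda_2)$ of enhanced categories extends to an equivalence of the ambient big derived categories, which gives the reverse implication.

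For the second assertion I would invoke the theorem of Dugger and Shipley \cite{DS}: two rings with equivalent unbounded derived categories of modules have homotopy equivalent $K$-theory spectra. Applied to $\Lambda_1,\Lambda_2$ (derived equivalent by the first assertion, in either of its two equivalent forms) this gives $K(\Lambda_1)\simeq K(\Lambda_2)$, hence $K^{G_1}(A_1)\simeq K^{G_2}(A_2)$. For the $K'$-theory statement I would use the noetherian hypothesis: by the Gillet--Waldhausen theorem, $K'(\Lambda_i)$ is the $K$-theory of the bounded derived category $\D^b_{fg}(\Lambda_i)$ of complexes with bounded finitely generated cohomology. Since $\Lambda_i$ is noetherian, this is the subcategory of pseudo-coherent complexes with bounded cohomology inside $\D(\Lambda_i)$, and one must check that the equivalence furnished by \cite{DS} carries $\D^b_{fg}(\Lambda_1)$ to $\D^b_{fg}(\Lambda_2)$, yielding $K'(\Lambda_1)\simeq K'(\Lambda_2)$ and hence $K'_{G_1}(A_1)\simeq K'_{G_2}(A_2)$.

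The main obstacle I anticipate is precisely this last point, the $K'$-theory ($G$-theory) comparison. The Dugger--Shipley argument is tailored to $K$-theory and exploits that perfect complexes are characterised triangulatedly as the compact objects; $G$-theory instead sees the abelian category of finitely generated modules, and an a priori ``exotic'' (non-tilting) triangulated equivalence need not respect the standard $t$-structure. The crux is thus to show the equivalence nonetheless preserves the bounded finitely generated (pseudo-coherent) complexes. For the rings $\Lambda_i$ produced by \propref{prop:Freyd-I} this should follow from their noetherianity together with the fact that the derived equivalence is realised, via the differential graded enhancements, by a two-sided tilting complex and hence by a dg-Morita equivalence, which preserves pseudo-coherence; verifying this compatibility is the delicate step, whereas the perfect-complex and $K$-theory statements are immediate once the Freyd reduction is in place.
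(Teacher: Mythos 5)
Your overall strategy --- reduce to module categories via the Freyd-type equivalence of \propref{prop:Freyd-I} and then quote Dugger--Shipley --- is exactly the paper's strategy, but two of your reductions do not go through as stated.

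First, \propref{prop:Freyd-I} does \emph{not} produce a single compact projective generator, hence not a single ring $\Lambda_i$. For $G=\Spec(R[P])$ diagonalizable the generating set is $\{A\otimes_R R_a \mid a\in P\}$, indexed by the (generally infinite) group $P$; for split reductive $G$ in characteristic zero it is indexed by the irreducible representations. Already for $\G_m$ this set is infinite, and the direct sum of its members is projective and a generator but is not small. What the proposition gives is an equivalence of ($A$-$G$)-Mod with Mod-$\sR$ for a \emph{ringoid} $\sR$ (a ring with several objects), and the paper is forced to work with ringoids throughout: it uses the ringoid versions of the Dugger--Shipley theorems (their Theorems 4.2 and 7.5) and has to prove a ringoid version of Rickard's theorem in the appendix (\propref{prop:Rick}). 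Your claim that $\Lambda_i$ is noetherian is likewise unsubstantiated; the appendix instead imposes completeness and right coherence on the ringoids. None of your subsequent steps is literally available for a single ring, though most have ringoid analogues.

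Second, in the converse direction of the first assertion you extend an equivalence $\operatorname{Perf}(\Lambda_1)\simeq\operatorname{Perf}(\Lambda_2)$ to the big derived categories by invoking recognition results for compactly generated categories ``of enhanced categories.'' But the hypothesis of the theorem is only a \emph{triangulated} equivalence of $D^{G_1}({proj}/{A_1})$ and $D^{G_2}({proj}/{A_2})$; no enhancement is given, and a bare triangulated equivalence of compact objects does not formally Ind-complete. The bridge is precisely the tilting-theoretic content of Dugger--Shipley's Theorem 7.5 for ringoids (equivalently, Rickard's Morita theory): a triangulated equivalence of the bounded derived categories of finitely generated projectives over ringoids forces a zig-zag of Quillen equivalences of the ambient model categories. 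This same input also resolves the $K'$-theory step you correctly flag as delicate: the paper's \propref{prop:Rick} shows the induced derived equivalence restricts to the categories $D^{hb,-}(\mbox{proj})$, which by the resolution property compute $K'_{G_i}(A_i)$ via ~\eqref{eqn:G-theory}, and then \cite[Corollary~3.9]{DS} transfers the $K$-theory of these Waldhausen subcategories. Your Gillet--Waldhausen route is a reasonable alternative in spirit, but as you acknowledge it leaves the preservation of the bounded finitely generated complexes unproved, and it additionally rests on the unjustified noetherianity of a ring that does not exist in this generality.
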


In other words, this theorem says that the equivariant $K$-theory as well as
the $K'$-theory of affine schemes with group action can be completely
determined by the equivariant derived category, which is much simpler
to study than the full equivariant geometry of the scheme.

\vskip .3cm

{\sl Brief outline of the proofs:}
We end this section with an outline of our methods.  Our proof of 
\thmref{thm:Intro-0} is based on the techniques used in
\cite{Kraft-1} to solve the equivariant Bass-Quillen question over $\C$. 
As in {\sl loc. cit.}, we show that all equivariant vector bundles  
actually descend to bundles on the quotient scheme for the group 
action. This allows us then to use the solution to the non-equivariant 
Bass-Quillen question to conclude the final proof.

In order to do this, one runs into several technical ring theoretic issues
and one has to find algebraic replacements for the geometric techniques 
available only over $\C$. Another problem is that the approach of \cite{MJP}
to solve Question~\ref{ques:EBQ}
for $R = \C$ case crucially uses the result of \cite{BH} that every
equivariant vector bundle over $\C[x_1, \cdots , x_n]$ is stably extended
from $\C$. But we do not know this over other rings. 
 
Our effort is to resolve these issues by a careful analysis of group scheme 
actions on affine schemes. Instead of working with schemes, we translate
the problem into studying comodules over some Hopf algebras. 
Sections~\ref{section:Recall} and ~\ref{section:EShv} are meant to do this.
In \S~\ref{section:Proj-Obj}, we prove some 
crucial properties of equivariant vector bundles on affine schemes
which play very important role in proving \thmref{thm:Intro-2}.
These sections generalize several results of \cite{BH} to more general
rings.

In \S~\ref{section:mon-alg}, we prove some properties of equivariant
projective modules over monoid algebras which are the main object of
study. 
In \S~\ref{section:toric-sch}, we show how to descend an equivariant vector 
bundle to the quotient scheme and then we use the solution to 
the Bass-Quillen conjecture in the non-equivariant case to complete the proof 
of Theorem~\ref{thm:Intro-0} in \S~\ref{section:equiv-vb-tor.sch}.
Theorem~\ref{thm:Intro-1} and its generalization are 
proven in \S~\ref{section:R-Trivial}.

We prove Theorem~\ref{thm:Intro-2} in \S~\ref{section:DEKT} by combining  the 
results of  \S~\ref{section:Proj-Obj}, \cite{DS} and
a generalization of a theorem of Rickard \cite{Rickard}. 
This generalization is shown in the appendix.

\vskip .3cm


\section{Recollection of group scheme action and invariants}
\label{section:Recall}
In this section, we recall some aspects of group schemes and their actions
over a given affine scheme from \cite[Expos{\'e}~3]{SGA3} and 
\cite[Expos{\'e}~8]{SGA2}. We prove some
elementary results about these actions which are of relevance to the proofs
of our main results. In this text, a {\sl ring} will always mean a
commutative noetherian ring with unit.

Let $S = \Spec(R)$ be a noetherian affine scheme and let $\Sch_S$ denote
the category of schemes which are separated and of finite type over $S$. 
Let ${\Alg}_R$ denote the category of finite type $R$-algebras.
We shall assume throughout this text that $S$ is connected. 
If $R$ and $S$ are clear in a context, the fiber product
$X {\underset{S}\times} Y$ and tensor product $A {\underset{R}\otimes} B$
will be simply written as $X \times Y$ and $A \otimes B$, respectively.
For an $R$-module $M$ and an $R$-algebra $A$, the base extension $M {\underset{R}\otimes}A$ will be denoted by $M_A$.

\vskip .3cm

\subsection{Group schemes and Hopf algebras}\label{subsection:GS}
Recall that a group scheme $G$ over $S$ (equivalently, over $R$) is an object 
of $\Sch_S$ which is 
equipped with morphisms $\mu_G: G \times G \to G$ (multiplication), 
$\eta: S \to G$ (unit)  
and $\tau: G \to G$ (inverse) which satisfy the known associativity,
unit and symmetry axioms. These axioms are equivalent to saying that
the presheaf $X \mapsto h_G(X): = \Hom_{\Sch_S}(X,G)$ on $\Sch_S$ is a group 
valued (contravariant) functor. 

If $G$ is an affine group scheme over $S$, one can represent it 
algebraically in terms of Hopf algebras over $R$. As this Hopf algebra
representation will be a crucial part of our proofs, we recall it
briefly. 

Let us assume that $G$ is an affine group scheme with coordinate ring 
$R[G]$. Then the multiplication, unit section and inverse maps above
are equivalent to having the morphisms $\Delta: R[G] \to R[G] \otimes R[G]$,
$\epsilon: R[G] \to R$ and $\sigma: R[G] \to R[G]$ in $\Alg_R$ such that
$\mu_G = \Spec(\Delta)$, $\eta = \Spec(\epsilon)$ and $\tau = \Spec(\sigma)$. 
The associativity, unit and symmetry axioms are equivalent to the
commutative diagrams:

\begin{equation}\label{eqn:Hopf-1}
\xymatrix{
  R[G] \ar[r]^>>>>>{\Delta}  \ar[rd]_{\Delta}
 & R[G] \otimes R[G] \ar[r]^>>>>>{\Delta \otimes Id} 
 & (R[G] \otimes R[G]) \otimes R[G]  \\
 & R[G] \otimes R[G] \ar[r]^>>>>>{Id \otimes \Delta} 
 & R[G] \otimes (R[G] \otimes R[G]) \ar[u]_{can~iso}}
\end{equation}
 
\begin{equation}\label{eqn:Hopf-2}
\xymatrix{
R[G] & R[G] \otimes R[G] \ar[l]_>>>>>{Id \otimes \epsilon}
\ar[r]^>>>>>{\epsilon \otimes Id} & R[G] & 
R[G] \otimes R[G] \ar[r]^>>>>>{\sigma \cdot Id}
& R[G] \\
& R[G] \ar[lu]^{Id} \ar[u]^{\Delta} \ar[ru]_{Id} 
& & R[G] \ar[u]^{\Delta} \ar[r]^>>>>>>>>>{\epsilon} 
& R. \ar[u]} 
\end{equation}

In other words, $(R[G], \Delta, \epsilon, \sigma)$ is a Hopf algebra over
$R$ and it is well known that the transformation $(G, \mu_G, \eta, \tau)
\mapsto (R[G], \Delta, \epsilon, \sigma)$ gives an equivalence between
the categories of affine group schemes over $S$ and finite type Hopf algebras
over $R$ (see \cite[Chapter~1]{Waterhouse}).

\vskip .3cm

\subsubsection{$R$-$G$-modules}\label{section:RG-mod}
Let $G$ be an affine group scheme over $R$. An $R$-$G$-module is an $R$-module 
$M$ equipped with a natural transformation of group functors 
$h_G(\Spec(A)) \rightarrow GL(M)(A)$, 
where the functor $GL(M)$ associates the group 
${\rm Aut}_A(A {\underset{R}\otimes} M)$ to an $R$-algebra 
\nolinebreak
$A$.

Equivalently, an $R$-$G$-module is an $R$-module $M$ which is also a comodule
over the Hopf algebra $R[G]$ in the sense that there is an $R$-linear map
$\rho: M \rightarrow R[G] {\underset{R}\otimes} M$ such that the following 
diagrams commute:
\begin{equation}\label{eqn:Comod-0}
\xymatrix{
M \ar[r]^>>>>>>>>>>{\rho} \ar[d]^{\rho}
& R[G] \otimes M \ar[d]^{\Delta \otimes Id_M}
& M \ar[r]^>>>>>{\rho} \ar[d]_{Id_M} 
& R[G] \otimes M \ar[d]^{\epsilon \otimes Id_M} \\
R[G] \otimes M \ar[r]^>>>>>{Id_{R[G]} \otimes\rho}
&  R[G] \otimes R[G] \otimes M  
&  M \ar[r]^>>>>>>{\simeq} 
&  R \otimes M.} 
\end{equation}

The reader can check that the comodule structure on $M$ 
associated to a natural transformation of functors
$h_G \to GL(M)$ is given by the map $\rho : M \to R[G] \otimes M$ with
$\rho(m) = h_G(R[G])(Id_{R[G]})(1 \otimes m)$.
We shall denote an $R$-$G$-module $M$ in the sequel in terms of an
$R[G]$-comodule by $(M, \rho)$.

A morphism $f: (M,\rho) \to (M',\rho')$ between $R$-$G$-modules 
is an $R$-linear map $f : M \rightarrow M'$ such that
$\rho' \circ f = (Id_{R[G]} \otimes f) \circ \rho$. 
We say that $M$ is an $R$-$G$-submodule of $M'$ if $f$ is injective.
The set of all $R$-$G$-module homomorphisms from $M$ to $M'$ will be denoted by 
$\Hom_{RG}(M,M')$.

We shall say that an $R$-$G$-module $M$ is  finitely generated 
(resp. projective) if it is finitely generated (resp. projective) as an 
$R$-module. The categories of $R$-$G$-modules will be
denoted by ($R$-$G$)-${\rm Mod}$. The category of finitely generated projective
$R$-$G$-modules will be denoted by ($R$-$G$)-${\rm proj}$.
The category of not necessarily finitely generated projective
$R$-$G$-modules will be denoted by ($R$-$G$)-${\rm Proj}$.

If $G$ is an affine group scheme which is flat over $R$, 
then it is easy to check that ($R$-$G$)-${\rm Mod}$ is an abelian category and 
($R$-$G$)-${\rm proj}$ is an exact category.
The flatness is essential here because 
in its absence, the kernel of an $R$-$G$-module map $f: M \to M'$ may
not acquire a $G$-action as $R[G] {\underset{R}\otimes} {\rm Ker}(f)$
may fail to be a submodule of $R[G] {\underset{R}\otimes} M$.

\vskip .3cm

\subsubsection{Submodule of invariants}\label{section:Inv}
Let $G$ be an affine group scheme over $R$ and let $(M, \rho)$ be an 
$R$-$G$-module. An element $m \in M$ is said to be 
{\it $G$-invariant} under the action of $G$ if $\rho(m) = 1 \otimes m$. 
The $R$-submodule of $G$-invariant elements of $M$ will be denoted by $M^G$.

Given an element $\lambda \in R[G]$, we say that $m \in M$ is 
{\it semi-invariant} of weight $\lambda$ under the $G$-action if 
$\rho(m) = \lambda \otimes m$. 
The following is a straightforward consequence of the definitions
and $R$-linearity of $\rho$. 

The group scheme $G$ is called linearly reductive if
the functor ${\rm Inv}:$ ($R$-$G$)-${\rm Mod} \to R$-${\rm Mod}$
sending $M$ to $M^G$ is exact.

\begin{lem}\label{lem:Inv-RG}
Given an $R$-$G$-module $(M, \rho)$ and $\lambda \in R[G]$, the set
$M_{\lambda}:= \{m \in M| \rho(m) = \lambda \otimes m\}$ is an $R$-$G$-submodule
of $M$. In particular, $M^G$ is an $R$-$G$-submodule of $M$.
Every $R$-submodule of $M_{\lambda}$ is an $R$-$G$-submodule of $M_{\lambda}$.
\end{lem}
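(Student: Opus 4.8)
The plan is to exhibit the comodule structure on $M_\lambda$ as a restriction of $\rho$, exploiting the fact that the coaction on semi-invariants is ``diagonal''. First I would check that $M_\lambda$ is an $R$-submodule of $M$, which is immediate from the $R$-linearity of $\rho$: for $m, m' \in M_\lambda$ and $r \in R$ one has $\rho(rm + m') = r\rho(m) + \rho(m') = \lambda \otimes (rm + m')$, so $rm + m' \in M_\lambda$.

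Next, to upgrade $M_\lambda$ to an $R$-$G$-submodule I need a coaction $\rho_\lambda \colon M_\lambda \to R[G] \otimes M_\lambda$ for which the inclusion $\iota \colon M_\lambda \hookrightarrow M$ becomes an $R$-$G$-morphism. The key observation is that $\rho$ already carries $M_\lambda$ into $R[G] \otimes M_\lambda$: for $m \in M_\lambda$ we have $\rho(m) = \lambda \otimes m \in R[G] \otimes M_\lambda$. Hence $\rho_\lambda := \rho|_{M_\lambda}$ is well defined, and by construction $\rho \circ \iota = (Id_{R[G]} \otimes \iota) \circ \rho_\lambda$, so $\iota$ is an injective $R$-$G$-morphism once $\rho_\lambda$ is seen to be a comodule structure. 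The comodule axioms \eqref{eqn:Comod-0} for $(M_\lambda, \rho_\lambda)$ are inherited by restriction from those of $(M, \rho)$; concretely, evaluating both axioms on $m \in M_\lambda$ reduces them to $\Delta(\lambda) \otimes m = \lambda \otimes \lambda \otimes m$ and $\epsilon(\lambda)\,m = m$, each of which already holds inside $M$. This proves that $M_\lambda$ is an $R$-$G$-submodule, and the assertion for $M^G$ follows by taking $\lambda$ to be the unit $1 \in R[G]$, since $M^G = M_1$ by definition.

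Finally, for the last claim I would take an arbitrary $R$-submodule $N \subseteq M_\lambda$. For $n \in N$ one has $\rho_\lambda(n) = \lambda \otimes n \in R[G] \otimes N$, so $\rho_\lambda(N) \subseteq R[G] \otimes N$, and the same restriction argument endows $N$ with a comodule structure making $N \hookrightarrow M_\lambda$ an $R$-$G$-morphism. I do not expect any genuine obstacle here: the entire lemma rests on the single observation that on $M_\lambda$ the coaction is scalar multiplication by $\lambda$ in the first tensor factor, and therefore preserves every $R$-submodule. The only point meriting a word of care is the claim that the comodule axioms survive restriction, but this is harmless, since they are equalities of maps into $R[G]^{\otimes k} \otimes M$ that hold on all of $M$ and hence a fortiori on the submodule $M_\lambda$.
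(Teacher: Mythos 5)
Your argument is correct and is precisely the direct verification the paper has in mind: the paper states this lemma with no proof at all beyond the preceding remark that it is ``a straightforward consequence of the definitions and $R$-linearity of $\rho$'', and your write-up supplies exactly that verification (restrict $\rho$, observe it lands in $R[G]\otimes M_\lambda$, check the two comodule axioms reduce to identities already known). The only point meriting a footnote is that transporting the coassociativity identity from $R[G]\otimes R[G]\otimes M$ down to $R[G]\otimes R[G]\otimes M_\lambda$ tacitly uses injectivity of the natural map between these tensor products --- automatic when $R[G]$ is flat over $R$ (as for the diagonalizable groups to which the lemma is applied) or when $\lambda$ is group-like, and a subtlety the paper itself does not address.
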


\vskip .3cm

\begin{exm}\label{exm:Inv-Ex}
Let $k$ be an algebraically closed field and let $G$ be a linear algebraic
group over $k$. In this case, a (finite) $k$-$G$-module is same as
a finite-dimensional representation $V$ of $G$. We can now check that the
above notion of $G$-invariants is same as the classical definition
of $V^G$ given by $V^G = \{v \in V| g\cdot v = v \ \ \forall \ \ g \in G\}$. 
Choose a  $k$-basis $\{v_1, \cdots , v_n\}$ for $V$ and suppose that
\begin{equation}\label{eqn:Inv-Ex-0}
\rho(v_i) = \stackrel{n}{\underset{j = 1}\sum}a_{ij} \otimes v_j.
\end{equation}

One can use ~\eqref{eqn:Comod-0} to see that $V$ becomes a 
$G$-representation via the homomorphism
$\rho': G \to GL(V)$ given by $\rho'(g) = ((a_{ij}(g)))$. 
Recall here that an element of $k[G]$ is same as a morphism $G \to \A^1_k$.
If we write an element of $V$ in terms of a row vector $x = (x_1, \cdots , x_n)
= \stackrel{n}{\underset{i = 1}\sum} x_i v_i$, then
it follows easily from ~\eqref{eqn:Inv-Ex-0} that
$\rho(x) = 1 \otimes x$ if and only if $((a_{ij}(g)))x = x$ for $g \in G$.
But this is same as saying that $\rho'(g)(x) = x$ for all $g \in G$.
\end{exm}

\vskip .3cm

\subsubsection{Group scheme action}\label{section:GSA}
Let $G$ be a group scheme over $S = \Spec(R)$ and let $X \in \Sch_S$.
Recall that a $G$-action on $X$ is a morphism $\mu_X: G {\underset{S} \times} X
\to X$ which satisfies the usual associative and unital identities for an
action. 

If $G$ is an affine group scheme over $S$ and $X = \Spec(A)$ is an affine
$S$-scheme, then a $G$-action on $X$ as above is equivalent to
a map $\phi: A \to R[G] {\underset{R}\otimes} A$ in $\Alg_R$
such that $\phi$ defines an $R[G]$-comodule structure on $A$. 
In this case, one has $\mu_X = \Spec(\phi)$. We shall denote this 
$G$-action on $X$ by the pair $(A, \phi)$ and call $A$ an $R$-$G$-algebra.
Note that this notion of $R$-$G$-algebra makes sense for any $R$-algebra
(possibly non-commutative) $R \to A$ such that the image of $R$ is contained
in the center of $A$. We shall use this $R$-$G$-algebra structure on
the endomorphism rings (see Lemma~\ref{lem:AG-hom}).

We also recall in the language of Hopf algebras, 
the $G$-action on an $R$-$G$-algebra $A$ is {\sl free} if
the map $\Phi:A {\underset{R}\otimes} A \to  R[G] {\underset{R}\otimes} A$,
given by $\Phi(a_1 \otimes a_2) = \phi(a_1) (1 \otimes a_2)$, is surjective.

\vskip .3cm

\section{Equivariant quasi-coherent sheaves on affine schemes}
\label{section:EShv}
Recall from \cite[\S~1.2]{Thom} that if $X \in \Sch_S$ has a $G$-action
$\mu_X: G {\underset{S}\times} X \to X$, then a $G$-equivariant 
quasi-coherent sheaf on $X$ is a quasi-coherent sheaf $\sF$ on $X$ together 
with an isomorphism of sheaves of $\sO_{G {\underset{S}\times} X}$-modules on 
$G {\underset{S}\times} X$:
\begin{equation}\label{eqn:Esh-0}
\theta: p^*(\sF) \xrightarrow{\simeq} \mu^*_X(\sF)
\end{equation}
where $p: G {\underset{S}\times} X \to X$ is the projection map. 
This isomorphism satisfies the cocycle condition on 
$G {\underset{S}\times} G {\underset{S}\times} X$:
\begin{equation}\label{eqn:Esh-1}
(1 \times \mu_X)^*(\theta) \circ p^*_{23}(\theta)=
(\mu_G \times 1)^*(\theta)
\end{equation}
where $p_{23}: G {\underset{S}\times} G {\underset{S}\times} X \to
G {\underset{S}\times} X$ is the projection to the last two factors.

A morphism of $G$-equivariant sheaves 
$f: (\sF_1, \theta_1) \to (\sF_2, \theta_2)$ is a map of sheaves
$f: \sF_1 \to \sF_2$  such that $\mu^*_X(f) \circ \theta_1 =
\theta_2 \circ p^*(f)$.

\vskip .3cm

\subsection{$A$-$G$-modules}\label{section:AG-mod}
Let us now assume that $G$ is an affine group scheme over $S = \Spec(R)$
which acts on an affine $S$-scheme $X = \Spec(A)$ with $A \in \Alg_R$.
Let $\phi:  A \to R[G] {\underset{R}\otimes} A$ be the action map
such that $\mu_X = \Spec(\phi)$.

\begin{defn}\label{defn:BGmod}
An $A$-module $M$ is an $A$-$G$-module, if $(M,\rho)$ is an 
$R$-$G$-module such that
\begin{equation}\label{eqn:Esh-2}
\rho(a.m) = \phi(a).\rho(m) \ \ \forall \ \ a \in A \ \ {\rm and} \ \ m \in M.
\end{equation}

An $A$-$G$-module homomorphism is an $A$-module homomorphism which is also an 
$R$-$G$-module homomorphism. Given a pair of $A$-$G$-modules, the set of 
$A$-$G$-module homomorphisms will be denoted by $\Hom_{AG}(\_,\_)$.
\end{defn}

We shall denote the category of $A$-$G$-modules by ($A$-$G$)-Mod.
An $A$-$G$-module $M$ will be called projective, if it is projective as an 
$A$-module.
We shall denote the category of finitely generated projective $A$-$G$-modules by
($A$-$G$)-proj. The category of (not necessarily finitely generated) 
projective $A$-$G$-modules will be denoted by ($A$-$G$)-Proj. 
Notice that given a morphism of $R$-$G$ algebras 
$f:(A, \phi_A) \to (B, \phi_B)$, there is a pull-back map
$f^*:$ ($A$-$G$)-Mod $\to$ ($B$-$G$)-Mod which preserves 
projective modules. 
It is easy to check that given an $R$-$G$-module $M$ and an $A$-$G$-module $N$, 
the extension of scalars gives an isomorphism
\begin{equation}\label{eqn:adjoint}
\Hom_{RG}(M, N) \xrightarrow{\simeq} \Hom_{AG}(M_A, N).
\end{equation}

\begin{prop}\label{prop:Esh-AG-mod}
There is an equivalence between the category of $G$-equivariant quasi-coherent
$\sO_X$-modules and the category of $A$-$G$-modules.
\end{prop}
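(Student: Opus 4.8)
The plan is to reduce everything to the classical affine equivalence and then track the extra datum. Since $X=\Spec(A)$ is affine, the global sections functor gives an equivalence between quasi-coherent $\sO_X$-modules and $A$-modules, with quasi-inverse $M\mapsto\widetilde{M}$; I will use this throughout to convert sheaf-theoretic statements into statements about $A$-modules. The first task is to compute the two pullbacks appearing in \eqref{eqn:Esh-0}. The projection $p\colon G\times X\to X$ and the action $\mu_X\colon G\times X\to X$ correspond respectively to the $R$-algebra maps $\iota,\phi\colon A\to R[G]\otimes_R A$, where $\iota(a)=1\otimes a$ and $\phi$ is the coaction defining the $G$-action. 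Hence, for $\sF=\widetilde{M}$, one gets $p^*\sF\cong\widetilde{R[G]\otimes_R M}$ (with the evident $R[G]\otimes_R A$-module structure $(\lambda\otimes a)(\mu\otimes m)=\lambda\mu\otimes am$) and $\mu_X^*\sF\cong\widetilde{M^\phi}$, where $M^\phi:=(R[G]\otimes_R A)\otimes_{A,\phi}M$.

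Second, I would translate the equivariance isomorphism $\theta$ into a coaction. Under the above identifications, $\theta$ becomes an isomorphism of $R[G]\otimes_R A$-modules $R[G]\otimes_R M\xrightarrow{\simeq}M^\phi$. I define $\rho\colon M\to R[G]\otimes_R M$ by sending $m$ to $\theta(1\otimes m)$, after identifying $M^\phi$ with $R[G]\otimes_R M$ via the standard antipode-twist isomorphism $\Psi\colon R[G]\otimes_R A\xrightarrow{\simeq}R[G]\otimes_R A$, $\Psi(\lambda\otimes a)=(\lambda\otimes 1)\,\phi(a)$ (whose inverse is built from the antipode $\sigma$ and which intertwines the $\iota$- and $\phi$-module structures). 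The heart of the proof is then the verification that the axioms match up: the $R[G]\otimes_R A$-linearity of $\theta$ (as opposed to mere $R[G]\otimes_R 1$-linearity) is exactly the compatibility $\rho(a\cdot m)=\phi(a)\rho(m)$ of \eqref{eqn:Esh-2}; the cocycle condition \eqref{eqn:Esh-1} on $G\times G\times X$, obtained by computing the three pullbacks $(1\times\mu_X)^*\theta$, $p_{23}^*\theta$ and $(\mu_G\times 1)^*\theta$ and comparing them through $\Delta$, becomes the coassociativity (left-hand square of \eqref{eqn:Comod-0}); and the normalization $e^*\theta=\id$, obtained from the cocycle condition by restricting along the unit section $e=(\eta\times\id_X)$, becomes the counit axiom (right-hand square of \eqref{eqn:Comod-0}). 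This shows that $(M,\rho)$ is an $A$-$G$-module. Conversely, given $\rho$, the same formulas run backwards produce an $R[G]\otimes_R A$-linear $\theta$ satisfying the cocycle condition, and the two constructions are mutually inverse.

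Finally, I would check functoriality: a morphism $f\colon(\sF_1,\theta_1)\to(\sF_2,\theta_2)$ of equivariant sheaves is, on global sections, an $A$-linear map $f\colon M_1\to M_2$, and the compatibility $\mu_X^*(f)\circ\theta_1=\theta_2\circ p^*(f)$ translates under the dictionary above into the identity $\rho_2\circ f=(\id_{R[G]}\otimes f)\circ\rho_1$, i.e. into $f$ being a morphism of $R$-$G$-modules; since it is already $A$-linear, $f$ is an $A$-$G$-module homomorphism, and conversely. Thus the two constructions are inverse equivalences of categories.

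I expect the main obstacle to be the bookkeeping in the middle step: correctly identifying which of the several $R[G]\otimes_R A$-module structures is in play on each pullback, and matching the cocycle condition on the triple product $G\times G\times X$ with the coassociativity square of \eqref{eqn:Comod-0}. The antipode-twist isomorphism $\Psi$ is what makes the identification of $\mu_X^*\sF$ with $R[G]\otimes_R M$ clean, and one must take care that the compatibility \eqref{eqn:Esh-2} emerges precisely from the $A$-linearity---rather than only the $R$-linearity---of $\theta$.
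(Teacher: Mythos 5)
Your proposal is correct and follows essentially the same route as the paper: both set $\rho(m)=\theta(1\otimes m)$, extract the compatibility \eqref{eqn:Esh-2} from the $(R[G]\otimes_R A)$-linearity of $\theta$, derive the two comodule squares from the cocycle condition and its restriction along the unit section, and invert the construction via $\theta(x\otimes m)=x\cdot\rho(m)$ with $\theta^{-1}$ built from the antipode. Your explicit antipode-twist identification of $\mu_X^*\sF$ with $R[G]\otimes_R M$ only makes precise a bookkeeping point the paper leaves implicit.
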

\begin{proof}
Let $M$ be an $A$-module which defines a $G$-equivariant quasi-coherent
sheaf on $X$ and let $\theta: R[G] {\underset{R}\otimes} M 
\xrightarrow{\simeq} R[G] {\underset{R}\otimes} M$ be an isomorphism
of $ R[G] {\underset{R}\otimes} A$-modules as in ~\eqref{eqn:Esh-0}
satisfying ~\eqref{eqn:Esh-1}. 

We define an $A$-$G$-module structure on $M$ by setting
$\rho: M \to R[G] {\underset{R}\otimes} M$ to be the map 
$\rho(m) = \theta(1 \otimes m)$.
The map $\rho$ is clearly $R$-linear and one checks that
\[
\begin{array}{lll}
\rho(a\cdot m) & = & \theta(1 \otimes a \cdot m) \\
& = & \theta(a \cdot (1 \otimes m)) \\
& = & \phi(a) \cdot \theta(1 \otimes m) \\
& = & \phi(a) \cdot \rho(m).
\end{array}
\]

Since the map $\phi: A \to R[G] {\underset{R}\otimes} A$
is just the inclusion map $a \mapsto 1 \otimes a$ when restricted to $R$,
one checks easily from ~\eqref{eqn:Esh-1} that 
\[
(1 \times \mu_X)^*(\theta) \circ p^*_{23}(\theta) (1 \otimes 1 \otimes m)
= (1 \times \mu_S)^*(\theta) \circ  p^*_{23}(\theta)(1 \otimes 1 \otimes m)
= (Id_{R[G]} \otimes \rho) \circ \rho (m)
\]
and it is also immediate that $(\mu_G \times 1)^*(\theta)(1 \otimes 1 \otimes m)
= (\Delta \otimes Id_{R[G]}) \circ \rho (m)$.
This is the first square of ~\eqref{eqn:Comod-0}.
The second square of ~\eqref{eqn:Comod-0} is obtained at once by applying 
the map $(\eta \times \eta \times 1)^*$ to ~\eqref{eqn:Esh-1},
where $\eta: S \to G$ is the
unit map. We have thus shown that $M$ is an $A$-$G$-module.

Conversely, suppose that $M$ is an $A$-$G$-module. 
We define the map $\theta:  R[G] {\underset{R}\otimes} M \to
R[G] {\underset{R}\otimes} M$ by
setting $\theta(x \otimes m) = x \cdot \rho(m)$.
In other words, we have 
\begin{equation}\label{eqn:Esh-3}
\theta = (\alpha \otimes Id_M) \circ (Id_{R[G]} \otimes \rho)
\end{equation}
where $\alpha: R[G] {\underset{R}\otimes} R[G] \to R[G]$ is the
multiplication of the ring $R[G]$.

Since $\rho$ is $R$-linear, we see that $\theta$ is $R[G]$-linear.
To show that $\theta$ is $(R[G] {\underset{R}\otimes} A)$-linear, it is thus
enough to show that it is $A$-linear. This is standard and can be checked
as follows. For any $a \in A, x \in R[G]$ and $m \in M$, we get
the following identities inside $R[G] {\underset{R}\otimes} M =
R[G] {\underset{R}\otimes} A {\underset{A}\otimes} M$:
\[
\begin{array}{lll}
\theta(a \cdot (x \otimes m)) & = & \theta(x \otimes a \otimes m) \\
& = & \theta(x \otimes 1 \otimes a \cdot m) \\
& = & (x \otimes 1) \cdot \rho(a\cdot m) \\
& {=}^1 & (x \otimes 1) \cdot (\phi(a) \cdot \rho(m)) \\
& = & (x \otimes 1) \cdot \phi(a) \cdot \rho(m).
\end{array}
\]
The equality ${=}^1$ above follows from ~\eqref{eqn:Esh-2}.
On the other hand, we have
\[
\begin{array}{lll}
a \cdot \theta(x \otimes m) & = & \phi(a) 
\cdot \theta( x \otimes 1 \otimes m) \\
& = & \phi(a) \cdot (x \otimes 1) \cdot \theta(1 \otimes m) \\
& = & \phi(a) \cdot (x \otimes 1) \cdot \rho(m).
\end{array}
\]

The two sets of identities above show that $\theta$ is 
$(R[G] {\underset{R}\otimes} A)$-linear.
To show that $\theta$ is an isomorphism, we define
$\theta^{-1}: R[G] {\underset{R}\otimes} M \to R[G] {\underset{R}\otimes} M$
by
\[
\theta^{-1} = (\alpha \otimes Id_M) \circ 
(Id_{R[G]} \otimes \sigma \otimes Id_M) \circ (Id_{R[G]} \otimes \rho),
\]
where $\sigma: R[G] \to R[G]$ is the inverse map of its Hopf algebra structure.

It is easy to check using ~\eqref{eqn:Hopf-2} and ~\eqref{eqn:Comod-0} that
$\theta \circ \theta^{-1} = \theta^{-1} \circ \theta = 
Id_{(R[G] {\underset{R}\otimes} M)}$.
The cocyle condition ~\eqref{eqn:Esh-1} is a formal consequence of the left 
square in ~\eqref{eqn:Comod-0}. It is also straightforward to check that
the two constructions given above yield the desired equivalence between the
categories of $G$-equivariant quasi-coherent sheaves on $X$ and
$A$-$G$-modules on $A$. We leave these verifications as an exercise. 
\end{proof}

\vskip .3cm

\begin{lem}\label{lem:AG-hom}
Assume that $G$ is flat over $R$ and let $(A, \phi)$ be an $R$-$G$-algebra. 
Let $(L, \rho_L)$, \\
$(M,\rho_M)$ and $(N, \rho_N)$ be  $A$-$G$-modules and let 
$p: (M,\rho_M) \to (N, \rho_N)$
be an $A$-$G$-linear map. 
Assume that $(L, \rho_L)$ is finitely generated.  
Then, $\Hom_A(L,N)$ has a natural 
$A$-$G$-module structure and $\Hom_A(L,L)$ has a natural 
$A$-$G$-algebra structure such that the following hold.
\begin{enumerate}
\item
The induced map
$\Hom_A(L,M) \xrightarrow{p \circ \_} \Hom_A(L,N)$
is $A$-$G$-linear.
\item
$\Hom_{AG}(L,N)= \Hom_{A}(L,N)^G$.
\item
If $(M, \rho_M)$ and $(N, \rho_N)$ are finitely generated, then 
$\Hom_A(N, L) \xrightarrow{\_ \circ p} \Hom_A(M,L)$
is $A$-$G$-linear.
\end{enumerate}
\end{lem}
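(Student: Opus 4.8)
The plan is to construct the equivariant structure on $\Hom_A(L,N)$ by transporting those of $L$ and $N$ across a flat base change, and then to read off the three claims from the resulting formula. Write $A'=R[G]\otimes_R A$, and for an $A$-$G$-module $M$ let $\theta_M\colon R[G]\otimes_R M\to R[G]\otimes_R M$, $\theta_M(x\otimes m)=x\cdot\rho_M(m)$, be the isomorphism produced in \propref{prop:Esh-AG-mod} (it realizes the equivariant structure \eqref{eqn:Esh-0}). The hypotheses are used only once: since $G$ is flat over $R$, $R[G]$ is flat over $R$ and hence $A'$ is flat over $A$; and since $A$ is noetherian and $L$ is finitely generated, $L$ is finitely presented over $A$. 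Therefore the canonical map
\[
\beta\colon R[G]\otimes_R\Hom_A(L,N)\xrightarrow{\ \simeq\ }\Hom_{A'}\big(R[G]\otimes_R L,\,R[G]\otimes_R N\big),\qquad x\otimes h\mapsto\big(y\otimes\ell\mapsto xy\otimes h(\ell)\big),
\]
is an isomorphism. For any $f\in\Hom_A(L,N)$ the composite $g_f:=\theta_N\circ(Id\otimes f)\circ\theta_L^{-1}$ is $A'$-linear, so I would \emph{define} the comodule map by $\rho_{\Hom}(f)=\beta^{-1}(g_f)$, with $\Hom_A(L,N)$ carrying its usual $A$-module structure.

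Next I would verify that $(\Hom_A(L,N),\rho_{\Hom})$ is an $A$-$G$-module. The module compatibility \eqref{eqn:Esh-2} is exactly where the asymmetry of $\theta$ enters: the $A$-linearity computation in \propref{prop:Esh-AG-mod} reads $\theta_N(x\otimes am)=\phi(a)\cdot\theta_N(x\otimes m)$, so for $a\in A$ one gets $g_{a\cdot f}=\phi(a)\cdot g_f$ (post-composition with multiplication by $\phi(a)\in A'$), and since $\beta$ intertwines this with the $A'$-action on $R[G]\otimes_R\Hom_A(L,N)$, it follows that $\rho_{\Hom}(a\cdot f)=\phi(a)\cdot\rho_{\Hom}(f)$. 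The counit square of \eqref{eqn:Comod-0} holds because each $\theta_M$ becomes the identity after restricting along the unit $\epsilon\colon R[G]\to R$, so specializing $\rho_{\Hom}(f)$ along $\epsilon$ recovers $f$. When $N=L$ the same formula is conjugation $f\mapsto\theta_L\circ(Id\otimes f)\circ\theta_L^{-1}$, which is multiplicative, and since $\beta$ is then an isomorphism of $A'$-algebras onto $\End_{A'}(R[G]\otimes_R L)$ we obtain the asserted $A$-$G$-algebra structure on $\End_A(L)$.

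The three assertions then follow formally. For (2), $f$ is $G$-invariant iff $\rho_{\Hom}(f)=1\otimes f$; as $\beta(1\otimes f)=Id_{R[G]}\otimes f$ this says $(Id\otimes f)\circ\theta_L=\theta_N\circ(Id\otimes f)$, and unwinding $\theta(x\otimes m)=x\cdot\rho(m)$ turns this into $\rho_N\circ f=(Id\otimes f)\circ\rho_L$, i.e. $f\in\Hom_{AG}(L,N)$; hence $\Hom_{AG}(L,N)=\Hom_A(L,N)^G$. For (1) and (3) the maps $p\circ\_$ and $\_\circ p$ are visibly $A$-linear, and their commutation with $\rho_{\Hom}$ reduces, by naturality of $\beta$ in the second and first variable respectively, to the single identity $\theta_N\circ(Id\otimes p)=(Id\otimes p)\circ\theta_M$, which is just the $A$-$G$-linearity of $p$ transcribed through $\theta$; note (3) requires $M$ and $N$ finitely generated so that $\beta$ is available for both $\Hom_A(N,L)$ and $\Hom_A(M,L)$.

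I expect the one genuinely non-formal step to be comodule coassociativity, the left square of \eqref{eqn:Comod-0}. This is equivalent to the cocycle condition \eqref{eqn:Esh-1} for the conjugated isomorphism $\theta_{\Hom}$ on $\sHom(L,N)$ built from $\theta_L$ and $\theta_N$, and checking it means pushing \eqref{eqn:Esh-1} for $\theta_L$ and $\theta_N$ through $\beta^{-1}$ while keeping track of the two tensor factors of $R[G]$ over $G\times G\times X$. Everything else is bookkeeping with the defining formula for $\rho_{\Hom}$.
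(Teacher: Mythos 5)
Your proposal follows essentially the same route as the paper's proof: both use the flat base change isomorphism $\beta$ (valid because $R[G]$ is $R$-flat and $L$ is finitely generated over the noetherian ring $A$) to define the comodule map as $f\mapsto\beta^{-1}\bigl(\theta_N\circ(Id\otimes f)\circ\theta_L^{-1}\bigr)$, reduce (1) and (3) to the single identity $\theta_N\circ(Id\otimes p)=(Id\otimes p)\circ\theta_M$ expressing $G$-linearity of $p$, and obtain (2) by unwinding $\theta(x\otimes m)=x\cdot\rho(m)$. The argument is correct; you are merely more explicit than the paper about verifying the comodule axioms, which it leaves as a check.
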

\begin{proof}
To define an $A$-$G$-module structure on $\Hom_A(L,N)$, we need to define an 
$R$-linear map $\psi_{LN}: \Hom_A(L,N) \rightarrow R[G] {\underset{R}\otimes} 
\Hom_A(L,N)$ satisfying ~\eqref{eqn:Comod-0} and ~\eqref{eqn:Esh-2}.

Since $R[G]$ is flat over $R$ and $L$ is a finitely generated $A$-module,  it  
is well known (e.g., see \cite[Proposition~2.10]{Eisenbud}) that  there is a
canonical isomorphism of $(R[G] {\underset{R}\otimes} A)$-modules:
\[
\beta : R[G] {\underset{R}\otimes} \Hom_A(L,N) \to
\Hom_{R[G] {\underset{R}\otimes} A}(R[G] {\underset{R}\otimes} L,
R[G] {\underset{R}\otimes} N).
\]

Using $\beta$, we can define for any $f \in \Hom_A(L,N)$,
$\psi_{LN}(f)$ to be the composition 
\begin{equation}\label{eqn:AG-hom-1}
R[G] {\underset{R}\otimes} L \xrightarrow{\theta_L^{-1}} 
R[G] {\underset{R}\otimes} L \xrightarrow{Id \otimes f} R[G] 
{\underset{R}\otimes} N \xrightarrow{\theta_N} R[G] {\underset{R}\otimes} N,
\end{equation}
where $\theta_L$ and $\theta_N$ are as in ~\eqref{eqn:Esh-3}.
One checks using ~\eqref{eqn:Esh-1}, ~\eqref{eqn:Esh-2} and 
~\eqref{eqn:Esh-3} that $\psi_{LN}$ defines an $A$-$G$-module structure on
$\Hom_A(L,N)$.
To show that $\Hom_A(L,L)$ has an $A$-$G$-algebra structure, we
need to show that $\psi_{LL}(f \circ g) = \psi_{LL}(f) \circ \psi_{LL}(g)$.
But this is immediate from ~\eqref{eqn:AG-hom-1}.

The map $\Hom_A(L,M) \xrightarrow{p \circ \_} \Hom_A(L,N)$
is known to be $A$-linear. Thus we only need to show that it is $R$-$G$-linear
in order to prove (1).
Using ~\eqref{eqn:AG-hom-1}, this is equivalent to showing that for any 
$f \in \Hom_A(L,M)$, the identity
\begin{equation}\label{eqn:AG-hom-2}
(Id_{R[G]} \otimes p) \circ \theta_M \circ 
(Id_{R[G]} \otimes f) \circ \theta^{-1}_L = \theta_N \circ 
(Id_{R[G]} \otimes (p \circ f)) \circ \theta^{-1}_L
\end{equation} 
holds in $\Hom_{R[G] {\underset{R}\otimes} A}(R[G] {\underset{R}\otimes} L, 
R[G] {\underset{R}\otimes} N)$.
In order to prove this identity, it suffices to show that
$(Id_{R[G]} \otimes p) \circ \theta_M = \theta_N \circ (Id_{R[G]} \otimes p)$.
But this is equivalent to saying that $p$ is $R$-$G$-linear
(see the definition of morphism of $G$-equivariant  sheaves 
below ~\eqref{eqn:Esh-1}).
This proves (1) and the proof of (3) is similar.

To prove (2), recall that
$f \in \Hom_A(L,N)^G$ if and only if $\psi_{LN}(f) = \theta_N \circ 
(Id \otimes f) \circ \theta_L^{-1} = Id \otimes f$
(see \S~\ref{section:Inv}). Equivalently, $\theta_N \circ (Id \otimes f) =
(Id \otimes f) \circ \theta_L$.
We are thus left with showing that
$\theta_N \circ (Id \otimes f) = (Id \otimes f) \circ \theta_L$ if and
only if $\rho_N \circ f = (Id \otimes f) \circ \rho_L$.
But the `if' part follows directly from ~\eqref{eqn:Esh-3} and the `only if' 
part follows by evaluating $\theta_L$ on $1 \otimes L \inj R[G] 
{\underset{R}\otimes} L$.
\end{proof}

\vskip .2cm

\subsection{Diagonalizable group schemes}
\label{section:Diag}
Recall from \cite[Expos{\'e}~8]{SGA2} that an affine group scheme $G$ over $R$
is called {\sl diagonalizable} if there is a finitely generated abelian
group $P$ such that $G = \Spec(R[P])$, where $R[P]$ is the group algebra
of $P$ over $R$. Recall that there is a group homomorphism
(exponential map) $e: P \to (R[P])^{\times}$ and the $R$-algebra $R[P]$ carries 
the following Hopf algebra 
structure: \ $\Delta(e_a) = e_a \otimes e_a, \ \ \sigma(e_a) = e_{-a}$ and 
$\epsilon(e_a) = 1$ for $a \in P$, where we write $e_a$ for $e(a)$.
As $R[P]$ is a free $R$-module with basis $P$, we see that
$G$ is a commutative group scheme which is flat over $R$.
It is smooth over $R$ if and only if the order of the finite part of $P$
is prime to all residue characteristics of $R$.

Taking $P = \Z$, we get the group scheme $\G_m = \Spec(R[\Z]) = 
\Spec(R[t^{\pm 1}])$. For an affine group scheme $G$ over $R$, 
its group of characters is the set $X(G):= \Hom(G, \G_m)$ whose elements are 
the morphisms $f: G \to \G_m$ in the category of affine group schemes over $R$.
Every element of $P$ defines a unique homomorphism of abelian groups
$\Z \to P$ and defines a unique morphism of group schemes 
$\Spec(R[P]) \to \G_m$. One checks that this defines an
isomorphism $P \xrightarrow{\simeq} X(G)$ and yields an anti-equivalence
of categories from finitely generated abelian groups to diagonalizable
group schemes over $R$. In particular, the category ${\Diag}_R$ of 
diagonalizable group schemes over $R$ is abelian.
We shall use the following known facts about the diagonalizable group schemes
and quasi-coherent sheaves for action of such group schemes.

\begin{prop}$($\cite[Expos{\'e}~8, \S~3]{SGA3}$)$\label{prop:Diag-exact}
Let $\phi:G \to G'$ be a morphism of diagonalizable group schemes.
Then there are diagonalizable group schemes, $H$, $G/H$ and $G'/G$
together with exact sequences in ${\Diag}_R$:
\[
0 \to H \to G \xrightarrow{\phi} G/H \to 0 \]
\[
0 \to G/H \to G' \to G'/G \to 0.
\]
\end{prop}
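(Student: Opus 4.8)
The plan is to reduce everything to the anti-equivalence between finitely generated abelian groups and diagonalizable group schemes recorded just above the statement, and to produce the two exact sequences as the images of the tautological kernel--image--cokernel factorization of the corresponding homomorphism of character groups. Write $D$ for the contravariant functor $Q \mapsto \Spec(R[Q])$ and $X$ for its quasi-inverse $G \mapsto X(G)$; both are anti-equivalences between the abelian category $\Ab^{fg}$ of finitely generated abelian groups and $\Diag_R$.

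First I would pass from the given morphism $\phi\colon G \to G'$ to the induced homomorphism of character groups $f := X(\phi)\colon P' \to P$, where $P = X(G)$ and $P' = X(G')$; note that $f$ runs in the reverse direction because $X$ is contravariant. In $\Ab^{fg}$ the map $f$ has a kernel, an image and a cokernel, all again finitely generated, yielding the two canonical short exact sequences
\begin{equation*}
0 \to \ker(f) \to P' \to \im(f) \to 0, \qquad 0 \to \im(f) \to P \to \cok(f) \to 0 .
\end{equation*}

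Next I would apply the anti-equivalence $D$ to these. Setting $H := D(\cok(f))$, $G/H := D(\im(f))$ and $G'/G := D(\ker(f))$ --- all diagonalizable, since $D$ carries finitely generated abelian groups to objects of $\Diag_R$ --- the contravariance of $D$ turns the second displayed sequence into $0 \to H \to G \xrightarrow{\phi} G/H \to 0$ and the first into $0 \to G/H \to G' \to G'/G \to 0$. Here the middle map of the first sequence is the corestriction of $\phi$ onto its image, and the subobject abbreviated $G$ in the symbol $G'/G$ is understood as $\phi(G) = G/H \inj G'$; thus $H = \ker(\phi)$, $G/H = \im(\phi)$ and $G'/G = \cok(\phi)$ in $\Diag_R$, exactly as required.

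The one point that genuinely needs justification, and where I would be most careful, is that $D$ carries short exact sequences to short exact sequences. This is automatic once we know $X$ (equivalently $D$) is an anti-equivalence of abelian categories: such a functor interchanges kernels and cokernels and hence preserves short exact sequences, the notion being self-dual. If one prefers a concrete check, one verifies directly that for a short exact sequence $0 \to Q'' \to Q \to Q' \to 0$ in $\Ab^{fg}$ the ring maps $R[Q''] \to R[Q] \to R[Q']$ exhibit $R[Q]$ as a faithfully flat $R[Q'']$-algebra with $R[Q] {\underset{R[Q'']}\otimes} R \cong R[Q']$, which translates into the asserted exactness of $D(Q') \inj D(Q) \surj D(Q'')$ in $\Diag_R$. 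Since the statement is cited from \cite[Expos{\'e}~8,~\S~3]{SGA3}, this exactness is precisely the content one must invoke, and no further computation is needed.
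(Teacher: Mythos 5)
Your proof is correct. The paper itself gives no argument for this proposition---it is quoted directly from \cite[Expos{\'e}~8, \S~3]{SGA3}---and your derivation is exactly the intended one: it is a formal consequence of the anti-equivalence between finitely generated abelian groups and ${\Diag}_R$ that the paper records immediately before the statement, since an anti-equivalence of abelian categories interchanges kernels and cokernels and therefore carries the two canonical short exact sequences attached to $X(\phi)\colon X(G')\to X(G)$ (kernel--image and image--cokernel) to the two displayed sequences, with $H=\ker(\phi)$, $G/H=\im(\phi)$ and $G'/G=\cok(\phi)$.
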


\begin{prop}$($\cite[Expos{\'e}~1, Proposition~4.7.3]{SGA3}$)$
\label{prop:graded structure}
Let $G = \Spec(R[P])$ be a diagonalizable group scheme. Then the 
category of $R$-$G$-modules is equivalent to the category of $P$-graded 
$R$-modules. The equivalence is given by associating to every $R$-$G$-module 
$(M, \rho)$, the $P$-graded $R$-module $M = {\underset{a \in P}\oplus} M_a$, 
where $M_a := \{m \in M | \rho(m) = e_a \otimes m\}$ is the subspace of $M$ 
containing elements of weight $e_a$ (see \S~\ref{section:Inv}). 
To every $P$-graded $R$-module $M = {\underset{a \in P}\oplus} M_a$, 
we associate the $R$-$G$ module $(M,\rho)$ where 
$\rho(m) := (e_a \otimes m) \ \forall \ m \in M_a$ and $\forall \ a \in P$.
\end{prop}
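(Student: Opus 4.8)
The plan is to construct the two functors explicitly and verify that they are mutually inverse, the only nonformal point being the direct sum decomposition $M = \bigoplus_{a \in P} M_a$ of a comodule. First I would record the one structural fact that drives everything: since $R[P] = \bigoplus_{a \in P} R\,e_a$ is a free $R$-module on the basis $\{e_a\}_{a \in P}$, tensoring gives a canonical decomposition $R[P] \otimes_R M = \bigoplus_{a \in P} (e_a \otimes M)$ for every $R$-module $M$, and I write $\pi_a \colon R[P] \otimes_R M \to M$ for the projection onto the $a$-th factor. I would then check that the assignment sending a $P$-graded module $M = \bigoplus_a M_a$ to $(M, \rho)$ with $\rho(m) = e_a \otimes m$ for $m \in M_a$ does land in $R$-$G$-modules: the counit square of \eqref{eqn:Comod-0} reduces to $\epsilon(e_a) = 1$, and the coassociativity square to $\Delta(e_a) = e_a \otimes e_a$, both of which hold by the definition of the Hopf structure on $R[P]$. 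This direction is purely formal.

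The substance is the opposite functor. Given $(M, \rho)$, the weight spaces $M_a = \{m : \rho(m) = e_a \otimes m\}$ are $R$-$G$-submodules by Lemma~\ref{lem:Inv-RG}, and I must show $M = \bigoplus_a M_a$. For each $m$ I would expand $\rho(m) = \sum_{a} e_a \otimes m_a$ as a finite sum using the decomposition above, so that $m_a = \pi_a(\rho(m))$. The counit axiom $(\epsilon \otimes \id)\circ\rho = \id$ together with $\epsilon(e_a) = 1$ yields $m = \sum_a m_a$, which gives spanning. To see that $m_a \in M_a$, I would apply both sides of the coassociativity axiom $(\Delta \otimes \id)\circ\rho = (\id \otimes \rho)\circ\rho$ to $m$: the left side equals $\sum_a e_a \otimes e_a \otimes m_a$ while the right side equals $\sum_a e_a \otimes \rho(m_a)$, and comparing the $e_a$-components in the outer tensor factor forces $\rho(m_a) = e_a \otimes m_a$. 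Directness of the sum is then immediate: if $\sum_i m_{a_i} = 0$ with the $a_i$ distinct, applying $\rho$ gives $\sum_i e_{a_i} \otimes m_{a_i} = 0$, and reading off the components of the free decomposition forces each $m_{a_i} = 0$.

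Finally I would verify that the two functors are inverse to one another and compatible with morphisms. On objects this is bookkeeping: rebuilding $\rho$ from the grading reproduces the original $\rho$ since the two agree on each $M_a$ and $M = \bigoplus_a M_a$, and taking weight spaces of the comodule built from a grading recovers exactly $M_a$ by the same component-matching argument. For morphisms, an $R$-linear map $f$ satisfies $\rho_N \circ f = (\id \otimes f)\circ\rho_M$ if and only if $f(M_a) \subseteq N_a$ for all $a$: the forward implication is obtained by evaluating the intertwining relation on $m \in M_a$, and the converse assembles the relation from its graded pieces by $R$-linearity. I expect the coassociativity computation establishing $m_a \in M_a$ to be the only step requiring genuine care; the remainder follows formally from $R[P]$ being free with the given diagonal Hopf structure.
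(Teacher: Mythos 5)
Your proof is correct. The paper does not prove this proposition at all --- it is quoted from \cite[Expos\'e~1, Proposition~4.7.3]{SGA3} --- and your argument is precisely the standard one underlying that reference: the decomposition $R[P]\otimes_R M=\bigoplus_{a\in P}(e_a\otimes M)$ coming from freeness of $R[P]$, the counit axiom giving $m=\sum_a m_a$, coassociativity with $\Delta(e_a)=e_a\otimes e_a$ forcing $\rho(m_a)=e_a\otimes m_a$, and component-matching for directness and functoriality. Nothing is missing.
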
 

\vskip .3cm

\begin{cor}\label{cor:SES-mod}
Let $G = \Spec(R[P])$ be a diagonalizable group scheme and let
\[
0 \rightarrow M_1 \rightarrow M_2 \rightarrow M_3 \rightarrow 0
\]
be an exact sequence of $R$-$G$-modules. Then the following hold.
\begin{enumerate}
\item
For each  $a \in P$, there is an exact sequence 
$0 \rightarrow (M_1)_a \rightarrow (M_2)_a \rightarrow (M_3)_a \rightarrow 0$
of $R$-$G$-modules. 
\item
$0 \rightarrow M_1^G \rightarrow M_2^G \rightarrow M_3^G \rightarrow 0$ is an 
exact sequence of $R$-$G$-modules.
\item
The sequence $0 \rightarrow M_1 \rightarrow M_2 \rightarrow M_3 \rightarrow 0$ 
splits as a sequence of $R$-$G$-modules if and only if it splits as a sequence 
of $R$-modules.
\end{enumerate}
\end{cor}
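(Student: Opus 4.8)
The plan is to transport the entire statement across the equivalence of \propref{prop:graded structure}, which identifies $R$-$G$-modules with $P$-graded $R$-modules by sending $(M,\rho)$ to $M = \bigoplus_{a \in P} M_a$. Under this equivalence the maps $M_1 \to M_2 \to M_3$ become homogeneous (degree-preserving $R$-linear) morphisms of graded modules, and the given short exact sequence becomes a short exact sequence of graded modules. Since $G = \Spec(R[P])$ is flat over $R$ (the group algebra $R[P]$ is a free $R$-module), the category of $R$-$G$-modules is abelian and exactness is detected on the underlying $R$-modules, so this translation is legitimate. All three parts then reduce to elementary facts about $P$-graded modules.

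For (1), the key point is that exactness of a sequence of $P$-graded modules is checked degree by degree: for a degree-$0$ map $f : M \to N$ one has $\ker f = \bigoplus_a \ker(f_a)$ and $\im f = \bigoplus_a \im(f_a)$, where $f_a : M_a \to N_a$ is the restriction, because $M = \bigoplus_a M_a$ and $f$ respects the grading. Hence exactness of $0 \to M_1 \to M_2 \to M_3 \to 0$ forces exactness of $0 \to (M_1)_a \to (M_2)_a \to (M_3)_a \to 0$ for every $a$. That each $(M_i)_a$ is an $R$-$G$-submodule is \lemref{lem:Inv-RG}, and the induced maps are restrictions of $R$-$G$-linear maps, hence $R$-$G$-linear.

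For (2), I would simply observe that $M^G = \{m \mid \rho(m) = 1 \otimes m\}$ coincides with the weight-zero space $M_0$, since $e_0 = 1$. Thus (2) is exactly the case $a = 0$ of (1). For (3), the forward implication is trivial, as a splitting of $R$-$G$-modules is in particular a splitting of $R$-modules. For the converse I would homogenize: given any $R$-linear section $s : M_3 \to M_2$ of the surjection $p : M_2 \to M_3$, let $s_0$ be its degree-$0$ component, sending $x \in (M_3)_a$ to the degree-$a$ component of $s(x)$. Then $s_0$ is a homogeneous, hence $G$-equivariant, $R$-linear map, and comparing degree-$a$ components of the identity $p \circ s = \id_{M_3}$ — using that both $p$ and $\id$ are homogeneous of degree $0$ — yields $p \circ s_0 = \id_{M_3}$. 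Therefore $s_0$ is a $G$-equivariant section and the sequence splits as $R$-$G$-modules.

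I expect no serious obstacle here: once \propref{prop:graded structure} is in hand, everything is formal. The only step meriting a little care is the homogenization in (3), where one must verify that taking the degree-$0$ part of a possibly inhomogeneous section again produces a section — which works precisely because $p$ is homogeneous of degree $0$, so that the degree-$a$ component of $p\circ s$ is $p\circ s_0$ on $(M_3)_a$.
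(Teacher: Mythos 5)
Your proposal is correct and follows essentially the same route as the paper: both reduce everything to the $P$-graded picture of \propref{prop:graded structure}, prove (1) and (2) by reading off exactness degree by degree (with $M^G = M_0$), and prove (3) by homogenizing an $R$-linear section — your $s_0$ restricted to $(M_3)_a$ is exactly the paper's composite $u_a = p_a \circ s \circ i_a$, and the verification $p \circ s_0 = \mathrm{id}$ via degree-$a$ components matches the paper's use of $t_a \circ p_a = p_a \circ t$.
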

\begin{proof}
The assertions (1) and (2) follow directly from Lemma~\ref{lem:Inv-RG} and
Proposition~\ref{prop:graded structure}.
The `only if' part of (3) is immediate and to prove the `if' part,
it is enough, using the assertion (1) and 
Proposition~\ref{prop:graded structure}, to give a splitting of the 
$R$-$G$-linear map $t_a: (M_2)_a \to (M_3)_a$ for $a \in P$.

Let $s: M_3 \to M_2$ be an $R$-linear splitting of $t: M_2 \to M_3$.
For $a \in P$, consider the composite map 
$u_a: (M_3)_a \xrightarrow{i_a} M_3 \xrightarrow{s} M_2
\xrightarrow{p_a} (M_2)_a$, where $i_a$ and $p_a$ are the inclusion and the
projection maps, respectively. As $t = {\underset{a \in P}\oplus} t_a$ and
hence $t_a \circ p_a = p_a \circ t$, one checks at once that $t_a \circ u_a$
is identity on $(M_3)_a$. Moreover, for each $m \in (M_3)_a$, one has 
\[
(Id_{R[G]} \otimes u_a) \circ \rho_3(m) = e_a \otimes u_a(m) =
\rho_2 \circ u_a(m)\]
and this shows that $u_a:  (M_3)_a \to  (M_2)_a$ is an $R$-$G$-linear 
splitting of $t_a$.
\end{proof}

Given any $v \in P$, we shall denote the free $R$-$G$-module of rank one
with constant weight $e_v$ by $R_v$ (see \S~\ref{section:Inv}).

\begin{lem}\label{lem:Hom-rk-1}
Let $G = \Spec(R[P])$ be a diagonalizable group scheme and let $(A,\phi)$ be 
an $R$-$G$-algebra. Given two free 
$R$-$G$-modules $(V,\rho_V)$ and $(W,\rho_W)$ of rank one and 
respective constant weights $e_v$ and $e_w$, 
the $A$-$G$-module structure on $\Hom_A(V_A, W_A)$ is given by
\[
\Hom_A(V_A, W_A) \simeq (R_{w-v}) {\underset{R} \otimes} A.
\]  
In particular, $\Hom_{AG}(V_A, W_A) \simeq A_{v-w}$ 
and ${\rm  End}_{AG}((V, \rho_V)) \simeq A^G$.
\end{lem}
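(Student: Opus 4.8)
The plan is to translate the abstract $A$-$G$-module structure on $\Hom_A(V_A,W_A)$, described in Lemma~\ref{lem:AG-hom} via the conjugation formula \eqref{eqn:AG-hom-1}, into explicit weight data using the grading equivalence of Proposition~\ref{prop:graded structure}. First I would fix $R$-basis vectors $v_0 \in V$ and $w_0 \in W$, so that $\rho_V(v_0) = e_v \otimes v_0$ and $\rho_W(w_0) = e_w \otimes w_0$ by hypothesis. Since $V_A \cong A$ and $W_A \cong A$ as $A$-modules (free of rank one, generated by $1 \otimes v_0$ and $1 \otimes w_0$), the $A$-module $\Hom_A(V_A,W_A)$ is free of rank one over $A$, generated by the map $\xi$ sending $1 \otimes v_0 \mapsto 1 \otimes w_0$. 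The content of the claim is entirely about which weight this generator carries.

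The main computation is to evaluate the comodule structure $\psi_{V_AW_A}(\xi)$ using \eqref{eqn:AG-hom-1}, namely $\psi(\xi) = \theta_{W_A} \circ (Id \otimes \xi) \circ \theta_{V_A}^{-1}$. Here I would use that on the rank-one free module $V_A$ with generator $1 \otimes v_0$ of weight $e_v$, the map $\theta_{V_A}$ defined in \eqref{eqn:Esh-3} acts by $\theta_{V_A}(x \otimes (1 \otimes v_0)) = x \cdot \rho(1 \otimes v_0) = x e_v \otimes (1 \otimes v_0)$, using the $A$-$G$-compatibility \eqref{eqn:Esh-2} which forces $\rho_{V_A}(1 \otimes v_0) = \phi(1) \cdot (e_v \otimes v_0) = e_v \otimes (1 \otimes v_0)$. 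Consequently $\theta_{V_A}^{-1}$ multiplies by $\sigma(e_v) = e_{-v}$, and $\theta_{W_A}$ multiplies by $e_w$. Chasing the generator $\xi$ through the composite then yields $\psi(\xi) = e_{w - v} \otimes \xi$, so that $\xi$ is semi-invariant of weight $e_{w-v}$. By Proposition~\ref{prop:graded structure} (applied to $R$-$G$-modules) this identifies the weight-graded structure, and combined with the $A$-linearity of the generator gives the isomorphism $\Hom_A(V_A,W_A) \cong R_{w-v} {\underset{R}\otimes} A$ as $A$-$G$-modules, since tensoring the rank-one weight-$e_{w-v}$ module $R_{w-v}$ up to $A$ via \eqref{eqn:adjoint} recovers exactly this object.

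For the two consequences, I would apply $(-)^G$ and read off invariants through the grading. By Lemma~\ref{lem:AG-hom}(2), $\Hom_{AG}(V_A,W_A) = \Hom_A(V_A,W_A)^G$, and under the isomorphism just established this is $(R_{w-v} \otimes A)^G$. Since $R_{w-v}$ contributes a constant weight shift by $e_{w-v}$, an element $e_{w-v} \otimes a$ is $G$-invariant precisely when $a$ is homogeneous of weight $e_{v-w}$ in $A$, i.e. $a \in A_{v-w}$ in the notation of Lemma~\ref{lem:Inv-RG}; this gives $\Hom_{AG}(V_A,W_A) \cong A_{v-w}$. Taking $V = W$ (so $v = w$) specializes the weight shift to zero, whence ${\rm End}_{AG}((V,\rho_V)) \cong A_0 = A^G$, and one checks the multiplicative structure is respected since composition of endomorphisms corresponds to multiplication in $A^G$ under this identification.

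I expect the one genuinely delicate point to be the bookkeeping in the $\theta^{-1}$ step: one must verify, using the explicit inverse formula for $\theta$ given in the proof of Proposition~\ref{prop:Esh-AG-mod} together with the antipode identity $\sigma(e_a) = e_{-a}$ and \eqref{eqn:Hopf-2}, that the weight on $V_A$ really inverts to $e_{-v}$ rather than merely cancelling. Everything else is a formal consequence of Lemma~\ref{lem:AG-hom} and Proposition~\ref{prop:graded structure}, so the proof should be short once this weight-tracking is carried out carefully.
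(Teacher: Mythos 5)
Your proposal is correct and follows exactly the route the paper intends: the paper's own proof is a one-line appeal to ``unraveling'' the $A$-$G$-module structure from Lemma~\ref{lem:AG-hom}, and your computation via \eqref{eqn:AG-hom-1} and \eqref{eqn:Esh-3} (showing $\theta_{V_A}^{-1}$ multiplies by $e_{-v}$, $\theta_{W_A}$ by $e_w$, so the generator has weight $e_{w-v}$, and then reading off invariants through the grading) is precisely that unraveling carried out in detail.
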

\begin{proof}
This follows directly from Lemma \ref{lem:AG-hom} by unraveling the 
$A$-$G$-module structure defined on $\Hom_A(V_A, W_A)$.
\end{proof}

\begin{lem} \label{lem:SES-ab-gp}
Let \[
0 \to P_1 \xrightarrow{\phi_1} P_2 \xrightarrow{\phi_2} P_3 \to 0
\]
be an exact sequence of finitely generated abelian groups and set $G_i = 
\Spec(R[P_i])$. Let $\phi_i^*: R[P_i] \to R[P_{i+1}]$ denote the corresponding 
map of group algebras. Let $(A, \theta)$ be an $R$-$G_1$-algebra.

\begin{enumerate}
\item
$(A, (\phi_1^* \otimes id_A) \circ \theta)$ is an $R$-$G_2$-algebra. 
\item
If $(E, \rho) \in \mbox{$(A$-$G_2)$-Mod}$, then
$E_b := \{\lambda \in E | (\phi_2^* \otimes id_E) \circ \rho(\lambda) = 
e_b \otimes \lambda\} 
\subseteq E$ is an $(A$-$G_2)$-submodule for each $b \in P_3$.
\item
If $E \in \mbox{$(A$-$G_2)$-proj}$, then so does $E_b$.
\end{enumerate}
\end{lem}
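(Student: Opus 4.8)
The plan is to reduce everything to the language of $P$-gradings via Proposition~\ref{prop:graded structure}, and to isolate the one place where exactness of the sequence of abelian groups is genuinely needed.

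For (1), I would simply note that $\phi_1^*\colon R[P_1]\to R[P_2]$, being induced by the group homomorphism $\phi_1$, is a homomorphism of Hopf algebras, i.e.\ it is compatible with the comultiplications and counits of $R[P_1]$ and $R[P_2]$. Pulling $\theta$ back to $(\phi_1^*\otimes \id_A)\circ\theta$ then inherits the coassociativity and counit diagrams of \eqref{eqn:Comod-0} from those for $\theta$ by a formal diagram chase, and it remains a map of $R$-algebras since $\theta$ is one and $\phi_1^*\otimes\id_A$ is. This is pure functoriality of the comodule–algebra structure along the group scheme morphism $\Spec(\phi_1^*)\colon G_2\to G_1$, and requires no computation beyond unwinding the definitions.

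For (2), the key observation is a grading computation. Writing $A=\bigoplus_{d\in P_1}A_d$ for the $P_1$-grading determined by $\theta$, an element of $G_1$-weight $d$ acquires under $(\phi_1^*\otimes\id_A)\circ\theta$ the $G_2$-weight $\phi_1(d)$; hence the $G_2$-grading of $A$ is concentrated in degrees lying in $\im(\phi_1)=\ker(\phi_2)$, and this is precisely where exactness of the sequence enters. Next, decompose the $A$-$G_2$-module $E=\bigoplus_{c\in P_2}E_c$ into its $G_2$-weight spaces. Since $(\phi_2^*\otimes\id_E)\circ\rho$ carries $E_c$ into $G_3$-weight $\phi_2(c)$, comparing coefficients against the $R$-basis $\{e_b\}$ of $R[P_3]$ gives $E_b=\bigoplus_{c\in\phi_2^{-1}(b)}E_c$. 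Being a sum of $G_2$-weight spaces, $E_b$ is automatically an $R$-$G_2$-submodule; the real content is $A$-stability, and here is where the obstacle lies. For homogeneous $a\in A_{c_0}$ and $m\in E_c$ with $\phi_2(c)=b$, the compatibility \eqref{eqn:Esh-2} yields $a\cdot m\in E_{c_0+c}$, and since $c_0\in\ker(\phi_2)$ we get $\phi_2(c_0+c)=b$, so $a\cdot m\in E_b$. Thus $E_b$ is an $A$-$G_2$-submodule; without the hypothesis $\im(\phi_1)=\ker(\phi_2)$ one would only obtain $R$-$G_2$-stability, so the exactness is exactly what makes the argument go through.

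For (3), I would observe that (2) upgrades the decomposition $E=\bigoplus_{b\in P_3}E_b$ to a decomposition into $A$-submodules; in particular $E=E_b\oplus\bigl(\bigoplus_{b'\neq b}E_{b'}\bigr)$ exhibits $E_b$ as an $A$-module direct summand of $E$. A direct summand of a finitely generated projective $A$-module is again finitely generated and projective, so $E_b\in(A\text{-}G_2)\text{-proj}$, completing the proof. The only delicate point in the whole argument is the $A$-stability in (2), driven by exactness; steps (1) and (3) are formal.
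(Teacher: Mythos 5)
Your proof is correct and follows essentially the same route as the paper's: decompose $E$ into $P_2$-weight spaces so that $E_b=\bigoplus_{c\in\phi_2^{-1}(b)}E_c$, check $A$-stability using the fact that homogeneous elements of $A$ have $G_2$-weights killed by $\phi_2$, and deduce (3) from $E_b$ being an $A$-module direct summand (the paper states the middle verification as ``straightforward using $(\phi_2^*\circ\phi_1^*)(e_c)=1$'' and skips it, which is exactly the computation you wrote out). One small remark: your argument only uses $\im(\phi_1)\subseteq\ker(\phi_2)$, i.e.\ $\phi_2\circ\phi_1=0$, so the claim that full exactness at $P_2$ is ``exactly what makes the argument go through'' slightly overstates what is needed.
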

\begin{proof}
The item (1) is clear.
For (2), we can write $E = \bigoplus\limits_{a \in P_2} E_a$,
where each $E_a$ is an $R$-$G_2$-submodule. In particular,
each $E_b$ is an $R$-$G_2$-submodule.
To see that it is an $A$-$G_2$-submodule, it suffices to know that that 
$E_b$ is an $A$-submodule of $E$.
Setting $A = \bigoplus\limits_{c \in P_1} A_c$,
it suffices to check that $x\lambda \in E_b$ for $x \in A_c$ and 
$\lambda \in E_b$. But this is a straightforward verification using
the fact that $(\phi_2^* \circ \phi_1^*)(e_c) = 1$ and we skip it.
The item (3) is clear as each $E_b$ is a direct factor of $E$ as an $A$-module.
\end{proof}

\begin{cor} \label{cor:H subgp-triv-act}
Continuing with the assumptions of Lemma~\ref{lem:SES-ab-gp}, 
assume furthermore that the action of $G_1$ on $A$ is free
and that every finitely generated projective module over $A^{G_1}$ is extended 
from $R$. Given any finitely generated projective $A$-$G_2$-module $E$,
we have $E \simeq F_A$ for some $R$-$G_2$-module $F$. 
\end{cor}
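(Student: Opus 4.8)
The plan is to use the character anti\-equivalence to read \lemref{lem:SES-ab-gp} geometrically: the sequence $0 \to P_1 \to P_2 \to P_3 \to 0$ corresponds to an exact sequence $0 \to G_3 \to G_2 \to G_1 \to 0$ of diagonalizable group schemes, the $G_2$\-action on $A$ factors through the quotient $G_2 \twoheadrightarrow G_1$, and the closed subgroup $G_3 = \Spec(R[P_3])$ acts trivially on $A$ (indeed $\phi_2^*\circ\phi_1^* $ is the counit). Thus the idea is to split $E$ by its $G_3$\-weights, untwist each piece so that $G_3$ acts trivially and only the free quotient $G_1$ remains, descend to $\Spec(A^{G_1})$, and feed in the extension hypothesis. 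Concretely, by \lemref{lem:SES-ab-gp}(2),(3) together with \propref{prop:graded structure} applied to $G_3$, the map $(\phi_2^* \otimes \id_E)\circ\rho$ makes $E = \bigoplus_{b \in P_3} E_b$ a direct sum of finitely many finitely generated projective $A$\-$G_2$\-submodules, where $G_3$ acts on $E_b$ through the single weight $e_b$; the sum is finite because $E$ is finitely generated over $A$.

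For each $b$ with $E_b \neq 0$ I would choose a lift $\tilde b \in P_2$ of $b$ and twist by the rank\-one character module, setting $E_b' := E_b \otimes_R R_{-\tilde b}$. This is again a finitely generated projective $A$\-$G_2$\-module, since tensoring with a rank\-one $R$\-$G_2$\-module leaves the underlying $A$\-module unchanged and merely shifts weights. Its $G_3$\-weight is $b + \phi_2(-\tilde b) = 0$, so $G_3$ acts trivially on $E_b'$. Consequently all $P_2$\-weights of $E_b'$ lie in $\ker(\phi_2) = \im(\phi_1)$, its comodule structure factors through $\phi_1^* : R[P_1] \inj R[P_2]$, and $E_b'$ is canonically an $A$\-$G_1$\-module, finitely generated and projective over $A$.

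The crux is the descent step. Since $G_1$ acts freely on $A$, the $P_1$\-grading of $A$ is strong, and I would use that the invariants functor $N \mapsto N^{G_1}$ gives an equivalence between finitely generated projective $A$\-$G_1$\-modules and finitely generated projective $A^{G_1}$\-modules, with a natural isomorphism $N \simeq N^{G_1} \otimes_{A^{G_1}} A$ in which $G_1$ acts only through $A$ (a Dade\-type descent for the free action; this is the one genuinely non\-formal input and the main obstacle). Applying this to $E_b'$ and then the hypothesis that finitely generated projective $A^{G_1}$\-modules are extended from $R$, we obtain finitely generated projective $R$\-modules $M_b$ with $(E_b')^{G_1} \simeq M_b \otimes_R A^{G_1}$, whence $E_b' \simeq M_b \otimes_R A$ as $A$\-$G_1$\-modules (equivalently, as $A$\-$G_2$\-modules with trivial $G_3$\-action), with $M_b$ carrying the trivial action.

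Finally I would untwist and reassemble. Untwisting gives $E_b \simeq E_b' \otimes_R R_{\tilde b} \simeq (M_b \otimes_R R_{\tilde b}) \otimes_R A = (F_b)_A$, where $F_b := M_b \otimes_R R_{\tilde b}$ is a finitely generated projective $R$\-$G_2$\-module. Summing over the finitely many nonzero $b$ yields $E = \bigoplus_b E_b \simeq \bigoplus_b (F_b)_A = F_A$ with $F := \bigoplus_b F_b$ an $R$\-$G_2$\-module, as required. Everything outside the descent step is weight bookkeeping (the twisting to kill the $G_3$\-weight and the reassembly of the grading) together with the supplied extension hypothesis, so I expect the proof to hinge entirely on having the free\-action descent equivalence available.
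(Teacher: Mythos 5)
Your argument is correct and follows essentially the same route as the paper: decompose $E$ by its $P_3$-weights via Lemma~\ref{lem:SES-ab-gp}, untwist each piece by a character of $G_2$ lifting the weight so that $G_3$ acts trivially, descend along the free $G_1$-action to a finitely generated projective $A^{G_1}$-module, and invoke the extension hypothesis before reassembling. The only cosmetic differences are that the paper realizes the untwist as the evaluation isomorphism for $\Hom_{AG_3}(R_a\otimes_R A, E)$ rather than as $E_b\otimes_R R_{-\tilde b}$, and justifies the descent step (your ``Dade-type'' equivalence for the free, i.e.\ strongly graded, action) by citing Vistoli's descent theorem.
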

\begin{proof}
We can use Lemma~\ref{lem:SES-ab-gp} to assume that $E = E_{b}$ for some 
$b \in P_3$. For any $a \in \phi_2^{-1}(b)$, it is easy to check that the
evaluation map 
\begin{equation}\label{eqn:SES-ab-gr-0}
\Hom_{AG_3}(R_{a} {\underset{R}\otimes} A , E) {\underset{A}\otimes}
(R_{a} {\underset{R}\otimes} A) \to E
\end{equation}
is an isomorphism of $A$-$G_3$-modules.
Lemma~\ref{lem:SES-ab-gp} however says that
$E':= \Hom_{AG_3}(R_{a} {\underset{R}\otimes} A , E) =
(\Hom_{A}(R_{a} {\underset{R}\otimes} A, E))^{G_3}$
is an $A$-$G_2$-module.
It follows that ~\eqref{eqn:SES-ab-gr-0} is an
$A$-$G_2$-linear isomorphism.

As $E'$ has trivial $G_3$-action, 
it can be viewed as a projective $A$-$G_1$-module.
It follows from our assumption and \cite[Theorem 4.46]{vis}
that this is the pull back of a finitely generated projective module over  
$A^{G_1}$. 
Since every such module over $A^{G_1}$ is extended from $R$, we 
conclude that $E' \simeq F' {\underset{R} \otimes} A$ as an $A$-$G_2$-module 
for some finitely generated projective $R$-module $F'$.
Taking $F = F' {\underset{R}\otimes} R_a$, we get $E \simeq F_A$. 
\end{proof}

\section{Structure of ringoid modules on ($A$-$G$)-Mod}
\label{section:Proj-Obj}
Let $R$ be a commutative noetherian ring and let $G$ be a flat affine group 
scheme over $R$. Let $(A, \phi)$ be an $R$-$G$-algebra.
We have observed in \S~\ref{section:RG-mod} that the flatness of $G$ ensures 
that ($A$-$G$)-Mod is an abelian category.
In this section, we show that $A$-$G$-modules have structure of modules  over 
a ringoid (defined below) for various cases of $G$. 
We shall say that an $A$-$G$-module is {\sl $A$-$G$-projective}, if it is a 
projective object of the abelian category ($A$-$G$)-Mod. 

\vskip .2cm

\begin{lem}\label{lem:Res-Prp-D}
 $(${Resolution property}$)$ 
Let $G = \Spec(R[P])$ be a diagonalizable group scheme over $R$. Then
every  finitely generated $A$-$G$-module is a quotient of a finitely 
generated, free $A$-$G$-module in the category $(A$-$G)$-Mod.
\end{lem}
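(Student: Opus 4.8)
The plan is to translate the statement into the language of $P$-graded modules and then run the standard argument that a finitely generated graded module admits finitely many homogeneous generators. By Proposition~\ref{prop:graded structure}, for $G = \Spec(R[P])$ every $R$-$G$-module $M$ is canonically a $P$-graded $R$-module $M = \bigoplus_{a \in P} M_a$, with $M_a$ the weight-$e_a$ subspace. First I would record the corresponding description of the $R$-$G$-algebra $(A,\phi)$: since $\phi$ is a homomorphism of $R$-algebras, the induced grading $A = \bigoplus_{c \in P} A_c$ satisfies $A_c A_{c'} \subseteq A_{c+c'}$, and the compatibility condition~\eqref{eqn:Esh-2} for an $A$-$G$-module $M$ becomes exactly $A_c \cdot M_a \subseteq M_{a+c}$. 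In other words, the category $(A$-$G)$-Mod is identified with the category of $P$-graded $A$-modules, under which $(R_v)_A = R_v \otimes_R A$ corresponds to the rank-one free module generated in degree $v$.

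The one step that genuinely uses the grading is the production of homogeneous generators. Given a finitely generated $A$-$G$-module $M$, I would choose $A$-module generators $x_1, \dots, x_n$ and decompose each into its finitely many nonzero homogeneous components $x_j = \sum_a (x_j)_a$ with $(x_j)_a \in M_a$. Because $1 \in A_0$, each $x_j$ lies in the $A$-submodule generated by its own homogeneous components, so the finite collection $\{(x_j)_a\}_{j,a}$ generates $M$ as an $A$-module. Relabelling these as $m_1, \dots, m_k$ of weights $a_1, \dots, a_k$ (so $m_i \in M_{a_i}$) gives a finite homogeneous generating set. I note that $P$ may have torsion, but this plays no role, since the homogeneous-decomposition argument is valid for an arbitrary abelian grading group.

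Finally I would build the equivariant free cover. Set $V = \bigoplus_{i=1}^k R_{a_i}$, a free $R$-$G$-module, so that $V_A = \bigoplus_{i=1}^k (R_{a_i})_A$ is a finitely generated free $A$-$G$-module. The adjunction~\eqref{eqn:adjoint} yields $\Hom_{AG}(V_A, M) \simeq \Hom_{RG}(V, M) \simeq \bigoplus_{i=1}^k M_{a_i}$, the last isomorphism being evaluation of an $R$-$G$-linear map $R_{a_i} \to M$ at its weight-$e_{a_i}$ generator (using Lemma~\ref{lem:Inv-RG} and Proposition~\ref{prop:graded structure}). The tuple $(m_1, \dots, m_k)$ then corresponds to an $A$-$G$-linear map $\pi : V_A \to M$ sending the $i$-th basis vector to $m_i$, whose image is the $A$-submodule generated by the $m_i$, namely all of $M$. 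Thus $\pi$ is the required surjection in $(A$-$G)$-Mod. Once the identification with $P$-graded modules is in place, surjectivity and $A$-$G$-linearity of $\pi$ are formal, so I expect the only real point of care to be the middle step; there is no serious obstacle.
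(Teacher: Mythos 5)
Your proof is correct and follows essentially the same route as the paper: decompose $M$ into weight spaces via Proposition~\ref{prop:graded structure}, extract a finite set of homogeneous generators, form the free $R$-$G$-module $\bigoplus_i R_{a_i}$, and apply the adjunction~\eqref{eqn:adjoint} to obtain the $A$-$G$-linear surjection. The only difference is that you spell out the step the paper merely asserts (that the homogeneous components of a finite generating set still generate), which is a worthwhile but routine addition.
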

\begin{proof}
Let $M$ be a finitely generated $A$-$G$-module. As an $R$-$G$-module, we can 
write $M = \bigoplus\limits_{a \in P} M_a$, where each $M_a$ is an $R$-module 
and has constant weight `$e_a$'. We can find a finite set of elements 
$S = \{ m_{a_1}^{1}, \cdots , m_{a_1}^{k_1}, \cdots  , m_{a_m}^{1}, \cdots , 
m_{a_m}^{k_m} \} \subset M$ which generates $M$ as an $A$-module such that 
$a_1, \cdots ,a_m \in P$, $k_i \in \mathbb{N}$ and $m_{a_i}^j \in M_{a_i}$. 
Consider the free $R$-$G$-module $F = \bigoplus\limits_{i=1}^m R_{a_i}^{k_i}$, 
where $R_{a_i}$ denotes the free rank 1 $R$-$G$-module with constant weight 
$e_{a_i}$. Then we have an $R$-$G$-module map $F \rightarrow M$ such that the 
set $S$ lies in its image. Therefore, ~\eqref{eqn:adjoint}
yields a unique  $A$-$G$-module surjection $F_A \surj M$, where $F_A $ is a 
free  $A$-$G$-module of finite rank.
\end{proof}

\begin{remk}\label{remk:Inf-gen}
Note that a similar argument shows that every $A$-$G$-module 
(not necessarily finitely generated) has an $A$-$G$-linear epimorphism from a 
direct sum of (possibly infinite)  rank 1 free $A$-$G$-modules.
\end{remk}

\begin{lem}\label{lem:Splitting lemma}
Let $G$ be as above. Then a finitely generated $A$-$G$-module is 
$A$-$G$-projective if and only if it is projective as an $A$-module.
In particular, the category $(A$-$G)$-Mod has enough projectives. 
\end{lem}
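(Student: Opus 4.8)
The plan is to reduce the whole statement to two facts about the diagonalizable case: that finite free $A$-$G$-modules are projective objects of $(A$-$G)$-Mod, and that they are free when the $G$-action is forgotten. For the first, I would record that a rank-one free module $R_v \otimes_R A$ corepresents the weight-$v$ functor. Combining the adjunction \eqref{eqn:adjoint} with the identification $\Hom_{RG}(R_v, N) \cong N_v$ (an $R$-$G$-map out of $R_v$ is determined by the image of the weight-$e_v$ generator, which must lie in $N_v$), one gets $\Hom_{AG}(R_v \otimes_R A, -) \cong (-)_v$. By \corref{cor:SES-mod}(1) the functor $N \mapsto N_v$ is exact, so $R_v \otimes_R A$ is a projective object; since an arbitrary direct sum of projectives is again projective (its corepresented functor being a product of exact functors), every free $A$-$G$-module is $A$-$G$-projective.

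Granting this, the ``only if'' direction is immediate. If $M$ is a finitely generated projective object, the resolution property (\lemref{lem:Res-Prp-D}) gives an $A$-$G$-surjection $F_A \surj M$ with $F_A$ finite free; projectivity splits it in $(A$-$G)$-Mod, so $M$ is an $A$-$G$-direct summand of $F_A$. Forgetting the $G$-action, each $R_{a_i} \otimes_R A \cong A$, so $F_A$ is a finite free $A$-module and $M$ is a direct summand of it, hence $A$-projective.

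The content is the ``if'' direction. Suppose $M$ is finitely generated and $A$-projective, and choose again an $A$-$G$-surjection $\pi \colon F_A \surj M$ with $F_A$ finite free; the kernel $K = \ker(\pi)$ is an $A$-$G$-module because $G$ is flat. I want to promote the $A$-linear splitting of $\pi$ (available since $M$ is $A$-projective) to an $A$-$G$-linear one, for then $M$ becomes an $A$-$G$-summand of the projective object $F_A$. Since $M$ is finitely generated, \lemref{lem:AG-hom} puts an $A$-$G$-structure on $\Hom_A(M, F_A)$ and $\Hom_A(M, M)$, makes $\pi_* = \pi \circ (-) \colon \Hom_A(M, F_A) \to \Hom_A(M, M)$ an $A$-$G$-linear map, and identifies $\Hom_{AG}(-,-) = \Hom_A(-,-)^G$. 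The map $\pi_*$ is surjective on underlying $A$-modules (if $s$ is an $A$-linear section then $g = \pi_*(s \circ g)$), hence a surjection of $R$-$G$-modules. The step I expect to carry the real weight is the invocation of linear reductivity: because $G$ is diagonalizable, the invariants functor $(-)^G$ is exact (\corref{cor:SES-mod}(2)), so it preserves this surjection. Passing to invariants yields a surjection $\Hom_{AG}(M, F_A) \surj \Hom_{AG}(M, M)$, and any preimage $\widetilde{s}$ of $\mathrm{id}_M$ is an $A$-$G$-linear section of $\pi$. Thus $M$ is a summand of $F_A$ in $(A$-$G)$-Mod and is $A$-$G$-projective, completing the equivalence.

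Finally, for ``enough projectives'' I would invoke \remref{remk:Inf-gen}: every $A$-$G$-module admits an epimorphism from a direct sum of rank-one free $A$-$G$-modules, and such a direct sum is projective by the first paragraph. Hence every object of $(A$-$G)$-Mod is a quotient of a projective object.
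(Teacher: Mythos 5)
Your proof is correct and follows essentially the same route as the paper's: the heart of both arguments is that $\Hom_A(L,-)$ carries an $A$-$G$-module structure with $\Hom_{AG}=\Hom_A(-,-)^G$ (\lemref{lem:AG-hom}) and that taking $G$-invariants is exact for diagonalizable $G$ (\corref{cor:SES-mod}). The only cosmetic difference is in the ``if'' direction: the paper verifies the lifting property of an $A$-projective module against an arbitrary equivariant surjection directly, whereas you first show free $A$-$G$-modules are projective objects via the corepresentability $\Hom_{AG}(R_v\otimes_R A,-)\cong (-)_v$ and then equivariantly split the single surjection $F_A \surj M$ --- both steps resting on exactly the same ingredients.
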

\begin{proof}
Suppose $L$ is a finitely generated projective $A$-$G$-module. 
Let $M \xrightarrow{\phi} N$ be a surjective $A$-$G$-module homomorphism. Then 
$\Hom_A(L,M) \xrightarrow{\phi \circ \_} \Hom_A(L,N)$ is an $A$-$G$-linear map 
by \lemref{lem:AG-hom} and is surjective as $L$ is a projective 
$A$-module. By Corollary \ref{cor:SES-mod} (2),  the map  
$\Hom_A(L,M)^G \xrightarrow{\phi \circ \_} \Hom_A(L,N)^G$ is also surjective
and therefore, $\Hom_{AG}(L,M) \xrightarrow{\phi \circ \_} \Hom_{AG}(L,N)$ is 
surjective by \lemref{lem:AG-hom}.  Hence, $L$  is $A$-$G$-projective.

Conversely, suppose $L$ is $A$-$G$-projective. By  \lemref{lem:Res-Prp-D}, 
there exists a finitely generated free $A$-$G$-module $F$ and an 
$A$-$G$-module surjection $F \surj L$. Since  $L$ is $A$-$G$-projective, there 
is a splitting and hence it is a direct summand of $F$. Since $F$ is a 
projective $A$-module, $L$ is $A$-projective as well. The existence of enough 
projectives in ($A$-$G$)-Mod now follows from this,
\lemref{lem:Res-Prp-D}  and Remark~\ref{remk:Inf-gen} since any direct sum of 
$A$-$G$-projectives is also $A$-$G$-projective.
\end{proof}

\vskip .3cm

Let us now consider more general situations. Recall from 
\cite[Expos{\'e} 19]{SGA3} that an affine  group scheme $G$ over $R$ is called 
{\sl reductive}, if it is smooth over $R$ and for every point $x \in S = 
\Spec(R)$, the geometric fiber $G {\underset{S}\times} \Spec(\ov{k(x)})$ is a 
reductive linear algebraic group over
$\Spec(\ov{k(x)})$. We say that $G$ is split reductive, if it is a connected 
and reductive group scheme over $R$ and it admits a maximal torus
$T \simeq \G^r_{m,R}$ such that the pair $(G, T)$  corresponds to a (reduced) 
root system $(A, \sR, A^{\vee}, \sR^{\vee})$ defined over $\Z$
(see \cite[Expos{\'e} 22]{SGA3}).
It is known that all Chevalley groups such as $GL_n, \ SL_n,  \ PGL_n, \  
Sp_{2n}$ and $SO_n$ are split reductive group schemes over $R$.

Using similar techniques, we can now extend Lemmas~\ref{lem:Res-Prp-D} and 
~\ref{lem:Splitting lemma} to the class of split reductive group schemes over 
$R$ as follows.

\begin{lem}\label{lem:Reductive}
Let $R$ be a unique factorization domain containing a field of characteristic 
zero. Let $G$ be a connected reductive group scheme over $R$ which
contains a split maximal torus $\G^r_{m,R}$. Let $(A, \phi)$ be an 
$R$-$G$-algebra. Then the following hold.
\begin{enumerate}
\item
Every  finitely generated $A$-$G$-module is a quotient of a finitely 
generated, free $A$-$G$-module in the category $(A$-$G)$-Mod.
\item
A finitely generated $A$-$G$-module is $A$-$G$-projective if and only if it is 
projective as an $A$-module. 
\end{enumerate}
\end{lem}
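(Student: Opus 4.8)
The plan is to reduce the reductive case to the diagonalizable case that was already handled in Lemmas~\ref{lem:Res-Prp-D} and ~\ref{lem:Splitting lemma}, by restricting the $G$-action to the split maximal torus $T \simeq \G^r_{m,R}$ and then exploiting linear reductivity of $G$ in characteristic zero to average back. For part (1), given a finitely generated $A$-$G$-module $M$, I would first forget down to an $A$-$T$-module. Since $T$ is diagonalizable, \lemref{lem:Res-Prp-D} produces a finitely generated free $A$-$T$-module $F_T \surj M$ surjecting onto $M$ as $A$-$T$-modules. The issue is that this $F_T$ carries only a $T$-action, not a $G$-action. The standard remedy is to induce: one forms the induced module $\Ind^G_T(F_T)$ (equivalently, apply the right adjoint to restriction), which is an $A$-$G$-module, and the counit $\Ind^G_T(F_T) \to M$ of the adjunction is an $A$-$G$-linear map that is surjective because its composite with the unit recovers the original surjection. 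One then replaces $\Ind^G_T(F_T)$ by a finitely generated free $A$-$G$-module mapping onto it, using the resolution property for $G$-representations over $R$: every finitely generated $R$-$G$-module is a quotient of a sum of duals of the coordinate ring, tensored up via \eqref{eqn:adjoint}.

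For part (2), the forward direction is formal and mirrors \lemref{lem:Splitting lemma}: if $L$ is a finitely generated $A$-$G$-module that is $A$-projective, and $M \surj N$ is an $A$-$G$-surjection, then $\Hom_A(L,M) \surj \Hom_A(L,N)$ is an $A$-$G$-linear surjection by \lemref{lem:AG-hom}, and passing to $G$-invariants preserves surjectivity precisely because $G$ is linearly reductive (the functor ${\rm Inv}$ is exact, as defined in \S~\ref{section:Inv}); this gives the lift showing $L$ is $A$-$G$-projective. The crucial input here is that a split reductive group scheme over a $\Q$-algebra is linearly reductive, which replaces the role played by \corref{cor:SES-mod}(2) in the diagonalizable setting. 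The converse runs exactly as before: by part (1) choose a finitely generated free $A$-$G$-module $F \surj L$; if $L$ is $A$-$G$-projective the surjection splits $A$-$G$-linearly, exhibiting $L$ as a direct summand of a free, hence $A$-projective, module.

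The main obstacle I anticipate is establishing linear reductivity in this relative, non-algebraically-closed setting, i.e. that the invariants functor $(-)^G$ on $(R\text{-}G)$-Mod is exact when $R$ is a $\Q$-algebra and $G$ is connected split reductive over $R$. Over a field of characteristic zero this is classical (Haboush/Weyl unitarian trick, or complete reducibility of rational representations), but over a general base ring $R$ containing $\Q$ one needs a Reynolds-type operator that is $R$-linear and natural, constructed compatibly with base change from $\Z$ using the split root datum $(A, \sR, A^{\vee}, \sR^{\vee})$. The UFD and characteristic-zero hypotheses on $R$ are presumably what make this averaging and the finiteness of the resulting construction go through. Once exactness of invariants is in hand, the two parts follow by the same $\Hom$-and-invariants argument used for diagonalizable groups, so I would isolate linear reductivity as a preliminary lemma and then quote \lemref{lem:AG-hom} and the adjunction \eqref{eqn:adjoint} to finish both (1) and (2).
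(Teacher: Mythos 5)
Your formal skeleton for part (2) matches the paper's, but there are two genuine gaps, and the single idea that closes both is absent from your proposal. The paper's key move is this: since $R$ is a UFD containing a field $k$ of characteristic zero and $G$ contains a split maximal torus, $G$ is split reductive (by \cite[Proposition~2.2, Expos{\'e} 22]{SGA3}) and hence admits a $k$-form $G_0$ with $k[G_0]\otimes_k R \simeq R[G]$. Because $R[G]\otimes_R M = k[G_0]\otimes_k M$, an $R$-$G$-module structure on $M$ is \emph{literally the same data} as a $k$-$G_0$-comodule structure, and $E^G = E^{G_0}$ for every $R$-$G$-module $E$. Classical complete reducibility and linear reductivity of the reductive group $G_0$ over the characteristic-zero field $k$ then apply verbatim, and no relative Reynolds operator over $R$ has to be constructed at all. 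Your proposed construction ``compatibly with base change from $\Z$'' cannot work as stated: reductive group schemes over $\Z$ are not linearly reductive (their positive-characteristic fibers fail), so any averaging must be built over $\Q$ or over $k$; the $k$-form trick is precisely the rigorous version of this, and in your write-up linear reductivity remains an unproved preliminary lemma on which everything else rests.

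The second gap is in part (1). Coinduction $\Ind^G_T$ is the \emph{right} adjoint to restriction, so the adjunction supplies the unit $M \to \Ind^G_T(\res\, M)$, a map \emph{into} the induced module; the ``counit $\Ind^G_T(F_T)\to M$'' that you invoke does not exist in that direction, and restriction along $T\hookrightarrow G$ has no usable left adjoint here. Even granting such a map, $\Ind^G_T$ destroys finite generation (already $\Ind^G_T$ of the trivial module is the coordinate ring of the positive-dimensional affine variety $G/T$), so you would not land in the category of finitely generated free $A$-$G$-modules that the statement requires; and the fallback you mention, surjecting from ``duals of the coordinate ring,'' again involves objects that are not finitely generated over $R$. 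The paper avoids induction entirely: viewing $M$ as a $k$-$G_0$-module, complete reducibility writes it as a (possibly infinite) direct sum of irreducibles; finitely many of these contain a chosen finite $A$-module generating set $S$, their direct sum $F$ gives a $k$-$G_0$-linear map $F\to M$ whose image contains $S$, and extension of scalars via \eqref{eqn:adjoint} yields the required surjection $F_A\surj M$. Once $E^G=E^{G_0}$ and linear reductivity of $G_0$ are in place, your argument for part (2) does go through essentially as you wrote it.
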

\begin{proof}
Let $k \inj R$ be a field of characteristic zero. Since $R$ is a UFD and $G$  
contains a split maximal torus, it is known in this case (e.g., see
\cite[Proposition~2.2, Expos{\'e} 22]{SGA3}) that $G$ is in fact a split 
reductive group scheme 
over $R$. In particular, it is defined over the ring $\Z$
and hence over $k$. Let $G_0$ be a $k$-form for $G$. In other words, $G_0$ is 
a connected reductive group over $k$ such that $k[G_0] {\underset{k}\otimes} R
\simeq R[G]$.

Let $M$ be a finitely generated $A$-$G$-module. Since $G_0$ is reductive and 
${\rm char}(k) = 0$, we see that it is linearly reductive 
(see \S~\ref{section:Inv}). 
Since $R[G] = k[G_0] {\underset{k}\otimes} R$, we see that the $R$-$G$-module 
structure on $M$ given by $(M, \rho)$ is same thing as
the $k$-$G_0$-module structure $(M, \rho)$ (see \S~\ref{section:RG-mod}). With 
this  $k$-$G_0$-module structure, 
we can write $M$ as a (possibly infinite) direct sum of irreducible 
$k$-$G_0$-modules. Let $S = \{m_1, \cdots , m_s\}$ be a generating set of 
$M$ as an $A$-module.
Then we can find finitely many irreducible $k$-$G_0$-submodules of $M$  whose 
direct sum contains $S$. Letting $F$ denote this direct sum, we get
a $k$-$G_0$-linear map $F \to M$ whose image contains $S$. This map uniquely 
defines an $R$-$G$-linear map
$F_R \to M$.  Extending this further to $A$ using ~\eqref{eqn:adjoint}, we get 
a unique $A$-$G$-linear map $F_A \to M$ which is clearly surjective.
This proves (1).

Suppose $L$ is a finitely generated projective $A$-$G$-module. Let 
$M \xrightarrow{\phi} N$ be a surjective $A$-$G$-module homomorphism. Then 
$\Hom_A(L,M) \xrightarrow{\phi \circ \_} \Hom_A(L,N)$ is an $A$-$G$-linear map 
by \lemref{lem:AG-hom} and is surjective as $L$ is a projective 
$A$-module. Using the linear reductivity of $G_0$ and arguing as in the proof 
of \lemref{lem:Splitting lemma}, we see that  the map  
$\Hom_A(L,M)^{G_0} \xrightarrow{\phi \circ \_} \Hom_A(L,N)^{G_0}$ is  surjective.
As argued in the proof of (1) above, it is easy to see from the identification 
of $(M, \rho_R)$ with $(M, \rho_k)$ and \S~\ref{section:Inv} that
$E^{G} = E^{G_0}$ for any $R$-$G$-module $E$. We conclude that the map 
$\Hom_A(L,M)^{G} \xrightarrow{\phi \circ \_} \Hom_A(L,N)^{G}$ is surjective.
Therefore, $\Hom_{AG}(L,M) \xrightarrow{\phi \circ \_} \Hom_{AG}(L,N)$ is 
surjective. Hence $L$  is $A$-$G$-projective.
The converse follows exactly as in the diagonalizable group case using (1).
\end{proof}

\vskip .2cm

We recall a few definitions in category theory.

\begin{defn} \label{defn: Strong generators}
Let $\mathcal{A}$ be a cocomplete abelian category.
We say that a set of objects $\{ P_{\alpha} \}_{\alpha}$ is a 
{\it set of strong generators} for $\sA$ , if for every object $X$ in $\sA$, 
we have $X = 0$ whenever $\Hom_{\mathcal{A}} (P_{\alpha},X) = 0$ for all $\alpha$.

An  object $P$ is called {\it small}, if 
${\underset{\lambda}\oplus}  \Hom_{\mathcal{A}}(P,X_{\lambda}) \rightarrow 
\Hom_{\mathcal{A}}(P,{\underset{\lambda}\oplus} X_{\lambda})$ is a bijection for 
every set of objects $\{ X_{\lambda} \}_{\lambda}$.
\end{defn}

Recall that a {\sl ringoid} $\sR$ is a small category which is enriched over 
the category $\Ab$ of abelian groups. This means that the hom-sets in
$\sR$ are abelian groups and the compositions of morphisms are bilinear maps 
of abelian groups.
A ringoid with only one object can be easily seen to be equivalent to a 
(possibly non-commutative) ring $R$.

A (right) $\sR$-module is a contravariant functor $M: (\sR)^{\rm op} \to \Ab$. 
It is known that the category $\sR$-Mod of (right) $\sR$-modules is a 
complete and cocomplete abelian category where the limits and colimits are 
defined object-wise. An $\sR$-module is called free of rank one, if it is 
of the form $B \mapsto \Hom_{\sR}(B, A)$ for some $A \in \sR$. Such modules 
are denoted by $H_A$.   
We say that an $\sR$-module is finitely generated, if it is a quotient of a 
finite coproduct of rank one free $\sR$-modules. It is known that $\sR$-Mod
is a Grothendieck category which has a set of small and projective strong 
generators. This set is given by the collection 
$\{H_A | A \in {\rm Obj}(\sR)\}$.
We refer to \cite{Mitchell} for more details about ringoids.

A combination of the previous few results gives us the following conclusion.

\begin{prop}\label{prop:Freyd-I}
Given a commutative noetherian ring $R$, an affine group scheme $G$ over $R$ 
and an $R$-$G$-algebra $(A, \phi)$,  the following hold.
\begin{enumerate}
\item
If $G = \Spec(R[P])$ is a diagonalizable group scheme, then the category 
($A$-$G$)-Mod has a set of small and projective strong generators.
\item
If $R$ is a UFD containing a field of characteristic zero and if $G$ is a 
split reductive group scheme, then the category ($A$-$G$)-Mod has a set of
small and projective strong generators.
\end{enumerate}

In either case, the category ($A$-$G$)-Mod is equivalent to the category 
$\sR$-mod for some ringoid $\sR$ and this equivalence preserves
finitely generated projective objects.
\end{prop}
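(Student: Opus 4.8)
The plan is to produce, in each of the two cases, an explicit \emph{set} of small projective strong generators for ($A$-$G$)-Mod, and then to feed this into the standard recognition theorem expressing a cocomplete abelian category with such generators as a category of modules over a ringoid. In the diagonalizable case $G = \Spec(R[P])$ I would take the family $\{R_v \otimes_R A\}_{v \in P}$ of rank one free $A$-$G$-modules; since $P$ is finitely generated this is indeed a set. In the split reductive case I would fix a $k$-form $G_0$ of $G$ as in the proof of \lemref{lem:Reductive} and take $\{V \otimes_k A\}$, with $V$ running over a set of representatives of the isomorphism classes of irreducible $k$-$G_0$-modules (a set, indexed for instance by dominant weights).

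Each chosen generator $P_\alpha$ must be checked to be projective, small, and (collectively) strongly generating. Projectivity as an object of ($A$-$G$)-Mod follows from \lemref{lem:Splitting lemma} (resp. \lemref{lem:Reductive}(2)): in the first case $R_v \otimes_R A \cong A$ is $A$-free, while in the second $V \otimes_k A$ is $A$-free of rank $\dim_k V < \infty$, so in both cases the module is finitely generated and $A$-projective, hence $A$-$G$-projective. Smallness and strong generation I would read off simultaneously from the adjunction \eqref{eqn:adjoint}: one has $\Hom_{AG}(R_v \otimes_R A, X) \cong \Hom_{RG}(R_v, X) \cong X_v$, the weight-$v$ subspace, and $X \mapsto X_v$ commutes with arbitrary direct sums and detects vanishing (if every $X_v$ is zero then $X = \bigoplus_v X_v = 0$ by \propref{prop:graded structure}). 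In the reductive case $\Hom_{AG}(V \otimes_k A, X) \cong \Hom_{kG_0}(V, X)$ by \eqref{eqn:adjoint} together with the identification of the $R$-$G$- and $k$-$G_0$-module structures in \lemref{lem:Reductive}; this functor commutes with direct sums because, $V$ being finite dimensional, every map $V \to \bigoplus_\lambda X_\lambda$ factors through a finite sub-sum, and it detects vanishing because $X$ is completely reducible as a $k$-$G_0$-module (linear reductivity in characteristic zero).

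Granting these generators, and noting that ($A$-$G$)-Mod is a cocomplete abelian category (the abelian structure coming from the flatness of $G$ as in \S\ref{section:RG-mod}), the final assertion is the recognition theorem for ringoid module categories (see \cite{Mitchell}). Concretely, let $\sR$ be the full additive subcategory on the chosen generators, with hom-groups $\Hom_{AG}(P_\alpha, P_\beta)$, and let $T\colon (A\text{-}G)\text{-Mod} \to \sR\text{-Mod}$ be $T(M) = \big(P_\alpha \mapsto \Hom_{AG}(P_\alpha, M)\big)$. Smallness makes $T$ preserve coproducts and projectivity makes it exact, so $T$ is cocontinuous; the Yoneda lemma identifies $T(P_\beta)$ with the rank one free module $H_{P_\beta}$ and shows $T$ is fully faithful on the generators. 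Since every object on either side admits a presentation by coproducts of generators (resp. rank one free modules) --- on the left by \remref{remk:Inf-gen} --- a routine diagram chase using cocontinuity and the full faithfulness on generators upgrades $T$ to an equivalence.

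Finally, the equivalence preserves finitely generated projective objects: combining \lemref{lem:Splitting lemma} (resp. \lemref{lem:Reductive}(2)) with the resolution property shows that the finitely generated projective objects of ($A$-$G$)-Mod are exactly the direct summands of finite coproducts of the $P_\alpha$, and since $T$ is an additive equivalence carrying $P_\alpha$ to $H_{P_\alpha}$ it matches these bijectively with the direct summands of finite coproducts of the $H_{P_\alpha}$, which are precisely the finitely generated projective $\sR$-modules. The recognition theorem itself is formal; the real work, and the main obstacle, is the verification that the generators are small and strongly generating --- most delicately in the reductive case, where it rests on the finite dimensionality of irreducible representations in characteristic zero and on the compatibility $E^G = E^{G_0}$ of invariants under base change from $k$ to $R$ established in \lemref{lem:Reductive}.
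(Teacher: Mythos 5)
Your proposal is correct and follows essentially the same route as the paper: the same generating sets $\{R_a\otimes_R A\}_{a\in P}$ (resp. $\{V_a\otimes_k A\}_a$), the same appeal to the Freyd--Mitchell recognition theorem via $\Hom(S,-)$, and the same identification of finitely generated projectives with summands of finite coproducts of generators. The only difference is presentational: you verify smallness and strong generation directly from the weight decomposition (resp. complete reducibility), whereas the paper defers these checks to the resolution-property lemmas, and the paper deduces preservation of finitely generated projectives by showing preservation of finitely generated objects; both variants are sound.
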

\begin{proof}
If $G = \Spec(R[P])$ is diagonalizable,  we set 
$S = \{A {\underset{R}\otimes} R_a | a \in P\}$ and if $G$ is split reductive, 
we set $S = \{A {\underset{k}\otimes} V_a\}_a$, where $\{V_a\}_a$ is the set 
of isomorphism classes of all irreducible $k$-$G_0$-modules.
The proposition now follows from 
Lemmas~\ref{lem:Res-Prp-D}, ~\ref{lem:Splitting lemma},  ~\ref{lem:Reductive} 
and Remark~\ref{remk:Inf-gen}.
It is shown as part of the proofs of these lemmas that $S$ is a set of strong 
generators for  ($A$-$G$)-Mod.

The last part follows from (1) and (2) and, \cite[Exer. 5.3H]{Freyd} which 
says that the functor
\[
\Hom(S, -): \mbox{(A-G)-Mod} \to \mbox{{{\rm End}}(S)-Mod}
\]
is  an equivalence of categories, where ${\rm End}(S)$ is the full subcategory 
of ($A$-$G$)-Mod consisting of objects in $S$.
To show that this equivalence preserves finitely generated projective objects, 
we only need to show that it preserves finitely generated objects 
since any equivalence of abelian categories preserves projective objects. 
Suppose now that $M$ is a finitely generated $A$-$G$-module in case (1).

It was shown in the proof of Lemma~\ref{lem:Res-Prp-D} that there is a finite 
set $\{a_1, \cdots , a_m\} \subseteq P$ and a 
surjective $A$-$G$-linear map $\stackrel{m}{\underset{i =1}\oplus} 
(A {\underset{R}\otimes} R_{a_i}) \surj M$.
But this precisely means that $\stackrel{m}{\underset{i =1}\oplus} 
H_{a_i}(A {\underset{R}\otimes} R_{a}) \surj
\Hom(S, M)(A {\underset{R}\otimes} R_{a})$ for all $a \in P$ and this means 
$\Hom(S, M)$ is a finitely generated object of End($S$)-Mod.
The case (2) follows similarly.
\end{proof}
 

\vskip .2cm

\begin{remk}\label{remk:Fin-Grp}
If $G$ is a finite constant group scheme over $R$ whose order is invertible in 
$R$, then one can show using the same argument as above that
the category ($A$-$G$)-Mod has a single small and projective generator given 
by $A {\underset{R}\otimes} R[G]$.
In particular, a variant of Freyd's theorem implies that ($A$-$G$)-Mod is 
equivalent
to the category of right $S$-modules, where $S$ is the endomorphism ring of 
$A {\underset{R}\otimes} R[G]$.
\end{remk}


\vskip .3cm

\section{Group action on monoid algebras} \label{section:mon-alg}
In this section, we prove some properties of 
projective modules over the ring of invariants when a
diagonalizable group acts on a monoid algebra.
We fix a commutative noetherian ring $R$ and a diagonalizable group scheme 
$G = \Spec(R[P])$ over $R$.

Let $Q$ be a monoid, i.e., a commutative semi-group with unit. Let $G(Q)$ be 
the Grothendieck group associated to $Q$.
\begin{defn}\label{defn:monoid}
We say that $Q$ is: \\
{\it cancellative} if $ax = ay$ implies $x =y$ in $Q$.\\
{\it semi-normal} if $x \in G(Q)$ and $x^2, x^3 \in Q$ implies 
$x \in Q$.\\ 
{\it normal} if $x \in G(Q)$ and $x^n \in Q$ for any $n > 0$ implies 
$x \in Q$.\\
{\it torsion-free} if $x^n = y^n$ for some 
$n > 0$ implies $x = y$. \\
{\it having no non-trivial unit} if $x, y \in Q$ and $xy = 1$ imply
that $x$ is the unit of $Q$. 
\end{defn}

Given a monoid $Q$, we can form the monoid algebra $R[Q]$. 
As an $R$-module $R[Q]$ is free with a basis consisting of the symbols 
$\{ e_{a}, a \in Q \}$, 
and the multiplication on $R[Q]$ is defined by the $R$-bilinear extension of 
$e_{a}.e_{b} = e_{ab}$.
The elements $e_{a}$ are called the monomials of $R[Q]$.
For example, polynomial ring $R[x_1, \cdots , x_n]$ is a monoid algebra 
defined by the monoid $\Z^{n}_{+}$, 
and the monomials of $R[\Z^{n}_{+}]$ are exactly the monomials of the 
polynomial ring.

\vskip .2cm

\subsection{Projective modules over monoid algebras}
\label{section:Proj-monoid}
For $R$ as above, consider the following 
conditions. \\
$(\dagger):$
Every (not necessarily finitely generated) projective $R$-module is free
and every finitely generated projective $R[Q]$-module is extended from
$R$ if $Q$ is a torsion-free abelian group. 

\vskip .2cm
\noindent
$(\dagger  \dagger):$
Every (not necessarily finitely generated) projective $R$-module is free and 
every (finitely generated) projective 
module over $R[Q \times \Z^n]$ is extended from $R$, if $Q$ is a torsion-free, 
semi-normal 
and cancellative monoid which has no non-trivial unit and $n \ge 0$ is 
an integer.

\begin{thm}\label{thm:Free}
Let $R$ be a commutative noetherian ring 
satisfying any of the following conditions.
Then it satisfies $(\dagger)$ and $(\dagger \dagger)$.
\begin{enumerate}
\item 
Principal ideal domains.
\item 
Regular local rings of dimension $\leq 2$. 
\item 
Regular local rings containing a field.
\end{enumerate}
\end{thm}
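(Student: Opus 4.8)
The plan is to prove that each class of rings---PIDs, regular local rings of dimension $\le 2$, and regular local rings containing a field---satisfies both $(\dagger)$ and $(\dagger\dagger)$. Each condition has two parts: freeness of arbitrary projective $R$-modules, and extension of projective modules over monoid algebras of the indicated type. The freeness of projective modules is classical: for a PID every projective module is free (Kaplansky's theorem for the non-finitely-generated case), and for any local ring every projective module is free (again by Kaplansky), which covers cases (2) and (3). So the substantive work is entirely in the extension statements, and the structure of the argument will be to reduce the monoid-algebra extension problem to the non-equivariant Bass--Quillen-type results available in the literature.

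For the core extension claims, I would first observe that a torsion-free abelian group $Q$ of finite rank is a filtered colimit of finitely generated free abelian groups $\Z^n$, so $R[Q]$ is a filtered colimit (indeed a localization) of polynomial rings $R[\Z^n] = R[x_1^{\pm 1},\dots,x_n^{\pm 1}]$; a finitely generated projective module descends to a finite stage, and extension over Laurent polynomial rings reduces to extension over ordinary polynomial rings. This reduces $(\dagger)$ to the Bass--Quillen statement for $R[x_1,\dots,x_n]$. For case (1), this is the Quillen--Suslin theorem over a PID (Quillen's solution handles $R$ a PID via patching). For case (3), this is precisely Lindel's theorem \cite{Lindel} that Bass--Quillen holds for regular local rings containing a field. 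For case (2), regular local rings of dimension $\le 2$ are handled by the known low-dimensional cases of Bass--Quillen (dimension $\le 2$ is classical, following from Serre's and Bass's results together with the structure of such rings). The condition $(\dagger\dagger)$ is more general since it allows the monoid $Q$ itself (not just a group) to appear: here I would invoke the theory of projective modules over monoid algebras, specifically the result (due to Gubeladze and its refinements, and for the regular-local-containing-a-field case, the Lindel-type extensions by Swan and others) that for a seminormal, cancellative, torsion-free monoid $Q$ without nontrivial units, projective modules over $R[Q\times\Z^n]$ are extended from $R$ whenever $R$ is in the given class.

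The cleanest approach, and the one I would carry out in detail, is to treat $(\dagger\dagger)$ as the master statement and derive $(\dagger)$ from it, since a torsion-free abelian group is built from $\Z^n$ which is $Q\times\Z^n$ with $Q$ trivial. Thus the whole theorem collapses to verifying $(\dagger\dagger)$ for the three classes. For the monoid part, the key input is Gubeladze's theorem on projective modules over seminormal monoid rings, which states that if $R$ is a PID (or more generally satisfies an appropriate regularity hypothesis) and $Q$ is seminormal, then all projective $R[Q]$-modules are free/extended; I would cite \cite{Gubeladze} (and Swan's account) for the precise statements, checking that the monoid hypotheses of $(\dagger\dagger)$ match the hypotheses under which these theorems are proved. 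The $\Z^n$ factor is absorbed by combining the monoid result with the Laurent-polynomial reduction already described.

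The main obstacle, and where I expect the real work to lie, is case (2): regular local rings of dimension $\le 2$ are not of finite type over a field, so Lindel's theorem does not apply directly, and one cannot simply quote Gubeladze's monoid theorem in the form proved for geometric rings. The resolution will be to exploit the low dimension explicitly: over a regular local ring of dimension $\le 2$, finitely generated projective modules have controlled rank behavior (a projective module of rank $>\dim$ splits off a free summand by Serre/Bass, and one reduces to rank-one modules governed by the Picard group, which vanishes), and this must be married to the monoid structure. I anticipate the delicate point is verifying the seminormality and cancellation hypotheses propagate correctly through the $Q\times\Z^n$ construction and that the dimension-$2$ Bass--Quillen input is genuinely available for Laurent-polynomial extensions $R[x_1,\dots,x_n,y_1^{\pm1},\dots,y_r^{\pm1}]$; here I would lean on the dimension-specific results in \cite{Rao} and the Quillen patching mechanism to glue local data, which is exactly the technical machinery that lets one bypass the finite-type-over-a-field restriction.
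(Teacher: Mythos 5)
Your high-level strategy coincides with the paper's: freeness of all projective $R$-modules via Kaplansky, and reduction of the monoid-algebra statements to the non-equivariant literature (the paper cites \cite[Theorem~8.4]{Gub} for PIDs and \cite[Theorem~1.2, Corollary~3.5]{Swan} for cases (2) and (3)); your observation that $(\dagger)$ follows from $(\dagger\dagger)$ by taking the monoid factor trivial is also sound, since a finitely generated torsion-free abelian group is $\Z^n$. However, there are two genuine gaps where you replace the citation by an argument of your own. For case (3) you assert that ``Bass--Quillen for regular local rings containing a field'' is ``precisely Lindel's theorem.'' It is not: Lindel \cite{Lindel} proved the conjecture for regular rings essentially of finite type over a field, and the passage to an arbitrary regular local ring containing a field requires N\'eron--Popescu desingularization (writing $R$ as a filtered colimit of smooth algebras over a field and descending the projective module to a finite stage). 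This is exactly the reduction the paper performs via \cite[Theorem~2.1]{Swan Popescu}, and your proposal omits it entirely.

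The more serious gap is case (2). Your plan is to exploit $\dim R\le 2$ through Serre splitting and vanishing of the Picard group, but those tools control projective modules of rank exceeding the Krull dimension of \emph{the ring they live over}, and the relevant ring is $R[Q\times\Z^n]$, whose dimension is $\dim R$ plus the rank of the monoid --- in general much larger than $2$. So neither the splitting-off of free summands nor the reduction to rank-one modules and ${\rm Pic}$ is available, and the claim that the dimension-$\le 2$ case of Bass--Quillen (let alone its monoid-ring strengthening $(\dagger\dagger)$) is ``classical'' is not justified. The paper does not attempt such an argument; it quotes \cite[Theorem~1.2, Corollary~3.5]{Swan}, where the extension property for seminormal monoid rings over regular rings of dimension $\le 2$ is established by Gubeladze--Swan's own machinery. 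As written, your case (2) would not compile into a proof without importing that result, at which point the Serre/Picard discussion is superfluous.
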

\begin{proof}
The first part of $(\dagger)$ holds more generally for any commutative 
noetherian ring $R$ which is either local or a principal ideal domain.
This follows from \cite[Theorem~2]{Kaplansky} and \cite{Bass}.

That the principal ideal domains satisfy $(\dagger)$ and 
$(\dagger \dagger)$ follows from
\cite[Theorem~8.4]{Gub}. These conditions for (2) 
follow from 
\cite[Theorem~1.2, Corollary~3.5 ]{Swan}.
To show $(\dagger)$ and $(\dagger \dagger)$ for (3), we first reduce
to the case when $R$ is essentially of finite type over a field,
following \cite[Theorem~2.1]{Swan Popescu} and Neron-Popescu desingularization.
In the special case when $R$ is essentially of finite type over a field, (3) 
follows from \cite[Theorem~1.2, Corollary~3.5 ]{Swan}.
\end{proof}

\vskip .2cm

\subsection{Projective modules over the ring of invariants}
\label{section:Proj-inv}
Let $Q$ be a monoid and let $u: Q \to P$ be a
homomorphism of monoids. Consider the graph homomorphism
$\gamma_u: Q \to P \times Q$ given by $\gamma_u(a) = (u(a), a)$. 
This defines a unique morphism $\phi: R[Q] \to R[P \times Q] \simeq R[P] 
{\underset{R}\otimes} R[Q]$ of monoid $R$-algebras,
given by $\phi(f_a) = g_{{\gamma_u}(a)} = e_{u(a)} \otimes f_a$,
where $e: P \to (R[P])^{\times}$, $f: Q \to (R[Q])^{\times}$ and 
$g: P \times Q \to (R[P \times Q])^{\times}$ are the exponential maps 
(see \S~\ref{section:Diag}).
Notice that these exponential maps are injective.
Setting $A = R[Q]$, we thus get a canonical map of $R$-algebras
\begin{equation}\label{eqn:Grp-Alg-0}
\phi: A \to R[P] {\underset{R}\otimes} A.
\end{equation} 
One checks at once that this makes $(A, \phi)$ into an $R$-$G$-algebra.

\begin{prop}\label{prop:G-inv}
Let $Q' = {\rm Ker}(u)$ be the submonoid of $Q$. Assume that $Q$ satisfies
any of the properties listed in Definition~\ref{defn:monoid}. Then
$Q'$ also satisfies the same property. 
In any case, there is an isomorphism of 
$R$-algebras $R[Q'] \xrightarrow{\simeq} A^G$.
\end{prop}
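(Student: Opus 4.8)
```latex
The plan is to prove the two assertions separately, beginning with the
identification $R[Q'] \xrightarrow{\simeq} A^G$, since this is the concrete
statement and the monoid-theoretic closure claims will follow by a more
elementary inspection.

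First I would make the $G$-action completely explicit in terms of the
$P$-grading. By \propref{prop:graded structure}, the $R$-$G$-module structure
on $A = R[Q]$ is nothing but a $P$-grading, and by the description of $\phi$ in
~\eqref{eqn:Grp-Alg-0} we have $\phi(f_a) = e_{u(a)} \otimes f_a$. Hence each
basis monomial $f_a$ is semi-invariant of weight $e_{u(a)}$, so the
homogeneous component $A_p$ of weight $p \in P$ is exactly the free $R$-module
spanned by $\{f_a : u(a) = p\}$. An element $\sum_a c_a f_a$ is $G$-invariant
precisely when $\rho(\sum_a c_a f_a) = 1 \otimes \sum_a c_a f_a$, which by
$R$-linearity and the injectivity of the exponential map $e$ forces $c_a = 0$
whenever $u(a) \neq 0$. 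Therefore $A^G = A_0$ is the free $R$-module on
$\{f_a : u(a) = 0\} = \{f_a : a \in Q'\}$. Since the multiplication
$f_a \cdot f_b = f_{ab}$ restricts to $Q'$ (as $u$ is a monoid homomorphism,
$Q' = \ker(u)$ is a submonoid), the $R$-linear isomorphism sending the basis
$\{e_a : a \in Q'\}$ of $R[Q']$ to $\{f_a : a \in Q'\}$ is a ring isomorphism
$R[Q'] \xrightarrow{\simeq} A^G$. This is the main content and is essentially
forced once the grading is unwound.

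For the hereditary properties I would check each clause of
\defref{defn:monoid} directly, using that $Q' \subseteq Q$ is a submonoid and
that $G(Q') \hookrightarrow G(Q)$ when $Q$ is cancellative. Cancellativity and
torsion-freeness are inherited immediately because the defining equations for
$Q'$ are instances of the same equations in $Q$. Absence of non-trivial units
is likewise clear, since a unit of $Q'$ is a unit of $Q$. For semi-normality
and normality I would argue that if $x \in G(Q')$ with $x^n \in Q'$ (or
$x^2, x^3 \in Q'$), then viewing $x \in G(Q)$ the corresponding hypothesis in
$Q$ yields $x \in Q$; combined with $x \in G(Q') = \ker(G(u))$, which records
that the $P$-weight of $x$ is zero, one concludes $x \in Q \cap \ker(G(u)) =
Q'$. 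The one point requiring a little care is identifying $G(Q')$ inside
$G(Q)$: I would note that $G$ is a left-exact functor on cancellative monoids,
so the inclusion $Q' \hookrightarrow Q$ induces an injection
$G(Q') \hookrightarrow G(Q)$ whose image is exactly the kernel of
$G(u): G(Q) \to P$, and this is what lets the normality-type arguments close up.

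The step I expect to be the mildest obstacle is the semi-normality and
normality inheritance, because there one must be sure that the element $x$
produced in $Q$ by the hypothesis on $Q$ genuinely lands in the kernel
submonoid $Q'$ rather than merely in $Q$; the grading/weight bookkeeping via
$G(u)$ is what guarantees this, and I would phrase it in terms of the weight
homomorphism to keep the argument transparent.
```
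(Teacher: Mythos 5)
Your proof is correct and takes essentially the same route as the paper: you identify $A^G$ with the weight-zero component $A_0$ spanned by $\{f_a : u(a)=0\}$ (the paper does the same computation directly with the basis $\{e_b\}$ of $R[P]$ rather than citing the grading proposition, but the content is identical), and you deduce the inheritance of (semi-)normality by combining the corresponding property of $Q$ with the vanishing of the $P$-weight of $x$, just as the paper does via its commutative diagram. One correction to a side remark: your claim that the image of $G(Q') \hookrightarrow G(Q)$ is \emph{exactly} $\ker(G(u))$ is false in general --- for instance, with $Q = \Z^2_{+}$ and $u(a,b) = a+b$ one has $Q' = \{0\}$, so $G(Q') = 0$, while $\ker(G(u)) \cong \Z$. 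Fortunately your argument only uses the containment $G(Q') \subseteq \ker(G(u))$, which does hold, so the proof itself is unaffected; you should simply drop the word ``exactly.''
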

\begin{proof}
Since we work with (commutative) monoids, we shall write their elements
additively.
It is immediate from the definition that the properties of being cancellative,
torsion-free and having no non-trivial units are shared by all submonoids of
$Q$. The only issue is to show that $Q'$ is semi-normal (resp. normal)
if $Q$ is so.

So let us assume that $Q$ is semi-normal and let $x \in G(Q')$ be such that 
$2x, 3x \in Q'$. Since $G(Q') \subseteq G(Q)$, we see that $x \in Q$.
Setting $y = u(x)$, we get $2y = u(2x) = 0 = u(3x) = 3y$.
Since $P = G(P)$, we get $y = 3y - 2y = 0$ and this means
$x \in Q'$. 

Suppose now that $Q$ is normal and $x \in G(Q')$ is such that $nx \in Q'$
for some $n > 0$. As $G(Q') \subseteq G(Q)$ and $Q$ is normal, we get
$x \in Q$. The commutative diagram 
\[
\xymatrix@C1pc{
Q' \ar[r] \ar[d] & Q \ar[d] \ar[dr]^{u} & \\
G(Q') \ar[r] & G(Q) \ar[r]_>>>{G(u)} & P}
\]
now shows that $u(x) = G(u)(x) = 0$ and hence $x \in Q'$.

It is clear from the definition that $R[Q'] \subseteq A^G$ and so we only need
to show the reverse inclusion to prove the second part of the proposition.
Let $p = \sum_a r_a f_a \in A^G$ with $0 \neq r_a \in R$. 
This means that $\phi(p) = 1 \otimes p = e_0 \otimes p$. Equivalently,
we get 
\[
\begin{array}{lllll}
& \sum_a r_a(e_{u(a)} \otimes f_a) & = & \sum_a r_a (e_0 \otimes f_a) \\ 
\Leftrightarrow & \sum r_a(e_{u(a)} - e_0) \otimes f_a & = & 0 \\
{\Leftrightarrow}^1 &  r_a(e_{u(a)} - e_0) = 0 \ \forall \ a \\ 
{\Leftrightarrow}^2 & e_{u(a)} =  e_0 \ \forall \ a \\ 
{\Leftrightarrow} & u(a) =  0 \ \forall \ a \\ 
{\Leftrightarrow} &  a \in  Q' \ \forall \ a. 
\end{array}
\]
The equivalence ${\Leftrightarrow}^1$ follows from the fact that 
$R[P] {\underset{R}\otimes} R[Q]$ is a free $R[P]$-module with basis
$\{f_a| a \in Q\}$ and  ${\Leftrightarrow}^2$ follows from the fact that 
$R[P]$ is a free $R$-module with basis $\{e_b| b \in P\}$ and
$r_a \neq 0$. 
The last statement implies that each summand of $p$ belongs to $R[Q']$
and so does $p$. This proves the proposition.
\end{proof}


\begin{cor}\label{cor:Free-G-inv}
Assume that $R$ satisfies $(\dagger \dagger)$. Let $Q$ be a
monoid which is cancellative, torsion-free, semi-normal and has no non-trivial
unit. Let $A = R[Q]$ be the monoid algebra having the $R$-$G$-algebra 
structure given by ~\eqref{eqn:Grp-Alg-0}.
Then finitely generated projective modules over $A$ and $A^G$ are free.
\end{cor}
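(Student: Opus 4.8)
The plan is to deduce both freeness statements directly from hypothesis $(\dagger\dagger)$, exploiting the fact that $(\dagger\dagger)$ simultaneously guarantees that every projective $R$-module is free. The key observation is that the monoid $Q$ already satisfies all of the monoid-theoretic hypotheses appearing in $(\dagger\dagger)$, and that by \propref{prop:G-inv} the submonoid $Q' = {\rm Ker}(u)$ inherits exactly these same properties. So the entire corollary should reduce to invoking $(\dagger\dagger)$ twice, once for $Q$ and once for $Q'$, with the number of extra copies of $\Z$ set to zero.

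First I would treat the ring $A = R[Q]$ itself. Since $Q$ is cancellative, torsion-free, semi-normal, and has no non-trivial unit, I may apply $(\dagger\dagger)$ with $n = 0$, so that the monoid $Q \times \Z^n$ appearing there is just $Q$. This yields that every finitely generated projective $A$-module is extended from $R$, that is, of the form $P_0 {\underset{R}\otimes} A$ for some finitely generated projective $R$-module $P_0$. But $(\dagger\dagger)$ also asserts that every projective $R$-module is free, so $P_0$ is free and hence $P_0 {\underset{R}\otimes} A$ is a free $A$-module. This settles the claim for $A$.

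For the ring of invariants, I would invoke \propref{prop:G-inv}, which identifies $A^G$ with $R[Q']$ where $Q' = {\rm Ker}(u)$, and which moreover shows that $Q'$ shares all four properties with $Q$: it is cancellative, torsion-free, semi-normal, and has no non-trivial unit. With these properties established, the argument of the previous paragraph applies verbatim with $Q'$ in place of $Q$: taking $n = 0$ in $(\dagger\dagger)$ shows that every finitely generated projective $R[Q']$-module is extended from $R$, and freeness of projective $R$-modules then forces such a module to be free. Composing with the isomorphism $R[Q'] \xrightarrow{\simeq} A^G$ of \propref{prop:G-inv} gives the statement for $A^G$.

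I do not anticipate any genuine obstacle here; the substantive content has already been isolated into \propref{prop:G-inv} (the inheritance of the monoid properties by the kernel submonoid) and into condition $(\dagger\dagger)$ (the extendedness of finitely generated projective modules over $R[Q \times \Z^n]$). The only points that require a moment's care are the specialization to the case $n = 0$ so that $Q \times \Z^n$ becomes $Q$, and the elementary implication that a module extended from a \emph{free} $R$-module is itself free; both are immediate.
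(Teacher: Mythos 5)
Your proposal is correct and is exactly the argument the paper intends: the corollary is stated without proof precisely because it follows from Proposition~\ref{prop:G-inv} (which supplies both the identification $A^G \simeq R[Q']$ with $Q' = {\rm Ker}(u)$ and the inheritance by $Q'$ of all four monoid properties, including having no non-trivial unit since that passes to submonoids) together with two applications of $(\dagger\dagger)$ at $n=0$ and the freeness of projective $R$-modules. Nothing is missing.
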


\begin{cor}\label{cor:Free-G-inv-L}
Let $R$ be a principal ideal domain and let $Q$ be a monoid which is 
cancellative, torsion-free and semi-normal (possibly having non-trivial units).
Let $A = R[Q]$ be the monoid algebra having the $R$-$G$-algebra  structure 
given by ~\eqref{eqn:Grp-Alg-0}.
Then finitely generated projective modules over $A$ and $A^G$ are free.
\end{cor}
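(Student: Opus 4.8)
The plan is to reduce both assertions to the single statement that, over a principal ideal domain $R$, every finitely generated projective module over $R[N]$ is free whenever $N$ is cancellative, torsion-free and semi-normal (units permitted). For the ring of invariants this reduction is immediate: by \propref{prop:G-inv}, the structure map \eqref{eqn:Grp-Alg-0} gives $A^G \cong R[Q']$ with $Q' = {\rm Ker}(u)$, and the same proposition guarantees that $Q'$ again inherits each of the properties in \defref{defn:monoid}, so that $Q'$ is cancellative, torsion-free and semi-normal. Thus it suffices to prove the freeness statement for $N = Q$ and for $N = Q'$, and I would dispatch these two cases by one and the same argument.

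When $N$ has no non-trivial unit this is exactly \corref{cor:Free-G-inv}: writing $R[N] = R[N \times \Z^0]$ and using that a principal ideal domain satisfies $(\dagger \dagger)$ (\thmref{thm:Free}), every finitely generated projective $R[N]$-module is extended from $R$, and projective modules over a PID are free. So the whole problem is to incorporate the unit group. First I would reduce to $N$ finitely generated: a finitely generated projective module is finitely presented, hence defined over $R[N_0]$ for some finitely generated submonoid $N_0 \subseteq N$; replacing $N_0$ by its semi-normalization inside $G(N)$ — which still lies in $N$ because $N$ is semi-normal, and which is again finitely generated and semi-normal — and writing $R[N]$ as the filtered colimit of the $R[N_\alpha]$ over such submonoids, freeness over $R[N]$ would follow from freeness over each $R[N_\alpha]$. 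For finitely generated $N$ the unit group $U = N^{*}$ is a finitely generated subgroup of the torsion-free group $G(N)$, hence $U \cong \Z^n$.

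The main obstacle I anticipate is precisely the presence of these units. The tempting move is to split $N \cong N_0 \times \Z^{n}$ with $N_0$ having no non-trivial unit and then apply $(\dagger \dagger)$; but such a splitting exists only when $U$ is saturated in $G(N)$, and for a merely semi-normal (as opposed to normal) monoid this can fail — the quotient $N/U$ may acquire torsion, so that $N$ need not be of the form $N_0 \times \Z^n$ at all. Consequently $(\dagger \dagger)$ by itself does not reach the general case, and this is exactly the place where I would use that $R$ is a PID rather than an arbitrary ring satisfying $(\dagger \dagger)$.

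The resolution is to invoke the full strength of Gubeladze's theorem over a principal ideal domain, namely \cite[Theorem~8.4]{Gub} (the result underlying \thmref{thm:Free}), which asserts freeness of finitely generated projective $R[N]$-modules for every cancellative, torsion-free, semi-normal monoid $N$, with non-trivial units allowed. Applying this to $N = Q$ settles the statement for $A = R[Q]$, and applying it to $N = Q'$ settles it for $A^{G} \cong R[Q']$, which completes the proof.
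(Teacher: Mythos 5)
Your proposal is correct and follows essentially the same route as the paper: \propref{prop:G-inv} identifies $A^G$ with $R[Q']$ and shows that $Q'$ inherits cancellativity, torsion-freeness and semi-normality from $Q$, after which both $A$ and $A^G$ are handled by Gubeladze's theorem over a PID in its full strength (units allowed); your intermediate discussion of $(\dagger\dagger)$ and of why a splitting $N \cong N_0 \times \Z^n$ can fail is sound motivation but not a different argument. The only cosmetic discrepancy is the citation: for the units-allowed case the paper invokes the main result of \cite{Gubel-1}, reserving \cite[Theorem~8.4]{Gub} for the no-non-trivial-unit condition $(\dagger\dagger)$, whereas you cite the latter for both.
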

\begin{proof}
Follows from  Proposition~\ref{prop:G-inv} and the  main result of 
\cite{Gubel-1}.
\end{proof}

We end this section with the following description of finitely generated
free $R$-$G$-modules when $R$ satisfies $(\dagger)$ and its consequence.

\begin{lem}\label{lem:rank-1}
Assume that $R$ satisfies $(\dagger)$.
Then every finitely generated free $R$-$G$-module is a direct sum of free 
$R$-$G$-modules 
of rank one. Every free $R$-$G$-module of rank one has constant weight 
of the form $e_a$ for some $a \in P$.
\end{lem}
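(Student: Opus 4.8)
The plan is to reduce both assertions to the weight decomposition provided by \propref{prop:graded structure} and then to invoke the first half of the hypothesis $(\dagger)$, namely that every projective $R$-module is free.

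First I would take $M$ to be a finitely generated $R$-$G$-module whose underlying $R$-module is free. By \propref{prop:graded structure} I may write $M = \bigoplus_{a \in P} M_a$, where $M_a = \{m \in M \mid \rho(m) = e_a \otimes m\}$ is the component of constant weight $e_a$ and the sum is an (internal) direct sum of $R$-submodules. Since $R$ is noetherian and $M$ is finitely generated, only finitely many $M_a$ are nonzero, so the decomposition is finite. In particular each $M_a$ is a direct $R$-summand of $M$, hence a finitely generated projective $R$-module.

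Next I would apply $(\dagger)$: projective $R$-modules are free, so each $M_a$ is free over $R$, say $M_a \simeq R^{k_a}$. Because every element of $M_a$ has constant weight $e_a$, this isomorphism upgrades to an isomorphism of $R$-$G$-modules $M_a \simeq R_a^{k_a}$. Assembling these over the finitely many nonzero $a$ yields $M \simeq \bigoplus_a R_a^{k_a}$, which exhibits $M$ as a direct sum of rank-one free $R$-$G$-modules and proves the first statement. For the second statement, suppose now that $M$ is free of rank one over $R$. Running the same argument gives $M = \bigoplus_a M_a$ with each $M_a$ free. Since $S = \Spec(R)$ is connected, rank is well defined and additive over finite direct sums of free modules, so $\sum_a \rank(M_a) = \rank(M) = 1$; thus exactly one $M_a$ is nonzero and has rank one, whence $M = M_a = R_a$ has constant weight $e_a$.

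The argument is essentially formal once the weight decomposition is in hand, so I do not anticipate a serious obstacle. The only steps that genuinely use the hypotheses are the passage from projective to free via $(\dagger)$, and, for the rank-one claim, the connectedness of $S$ used to make rank additive and well defined. The one point that deserves care is checking that each graded piece $M_a$ is an $R$-module direct summand of $M$ (so that it is projective before $(\dagger)$ is applied); I expect this to be the main thing to verify, and it follows at once from the fact that the weight decomposition of \propref{prop:graded structure} is a decomposition by $R$-submodules.
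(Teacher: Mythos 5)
Your proof is correct and follows essentially the same route as the paper's: decompose $M$ into weight spaces via Proposition~\ref{prop:graded structure}, note each $M_a$ is an $R$-module direct summand hence projective hence free by $(\dagger)$, and observe that a constant-weight free $R$-module splits $G$-equivariantly into rank-one pieces (the paper cites Lemma~\ref{lem:Inv-RG} for the fact that every $R$-submodule of a constant-weight module is an $R$-$G$-submodule, which is exactly your "upgrade" step). The rank-one claim is likewise handled the same way in the paper, just with less detail than your rank-counting argument.
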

\begin{proof}
Let $M$ be a finitely generated free $R$-$G$-module. 
By Proposition~\ref{prop:graded structure}, we can write 
$M = {\underset{a \in P}\oplus} M_a$. 
Lemma~\ref{lem:Inv-RG} says that this is a direct sum decomposition
as $R$-$G$-modules. Moreover, each $M_a$ is a direct factor of
the free $R$-module $M$ and hence is projective and thus free
as $R$ satisfies $(\dagger)$.

Therefore, it is enough to show that if $M$ is a free 
$R$-$G$-module of constant weight $e_a$, then every $R$-submodule
of $M$ is an $R$-$G$-submodule. But this follows directly from 
Lemma~\ref{lem:Inv-RG}. The decomposition 
$M = {\underset{a \in P}\oplus} M_a$ also shows that a free rank one 
$R$-$G$ module must have a constant weight of the form $e_a$ with $a \in P$.
\end{proof}

\begin{cor} \label{cor:fiber extn}
Assume that $R$ satisfies $(\dagger)$.
Under the assumptions of Corollary~\ref{cor:H subgp-triv-act},
suppose that $F, F' \in \mbox{$(R$-$G_2)$-proj}$ are isomorphic as 
$R$-$G_3$-modules. Then $F_A \simeq F'_A$ as $A$-$G_2$-modules.
\end{cor}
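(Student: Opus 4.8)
The plan is to reduce to the case where $F$ and $F'$ are free of rank one and then to produce an isomorphism after base change by analysing a suitable $\Hom$-module. First I would invoke Proposition~\ref{prop:graded structure} together with Lemma~\ref{lem:rank-1}: since $R$ satisfies $(\dagger)$, the $R$-projective modules $F$ and $F'$ are free as $R$-$G_2$-modules, so I may write $F \cong \bigoplus_i R_{a_i}$ and $F' \cong \bigoplus_j R_{a'_j}$ with $a_i, a'_j \in P_2$. Restricting the grading along $\phi_2$ shows that, as $R$-$G_3$-modules, $F \cong \bigoplus_{b \in P_3} R_b^{\oplus n_b}$ and $F' \cong \bigoplus_b R_b^{\oplus n'_b}$, where $n_b$ (resp. $n'_b$) counts the indices with $\phi_2(a_i) = b$ (resp. $\phi_2(a'_j) = b$). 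An $R$-$G_3$-isomorphism thus forces $n_b = n'_b$ for every $b$. Since base change to $A$ commutes with direct sums, it suffices to prove the rank-one assertion: if $a, a' \in P_2$ satisfy $\phi_2(a) = \phi_2(a')$, then $(R_a)_A \cong (R_{a'})_A$ as $A$-$G_2$-modules; grouping the summands of $F_A$ and $F'_A$ by the fibres of $\phi_2$ and matching multiplicities then gives $F_A \cong F'_A$.

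For the rank-one case, I would study $E := \Hom_A((R_a)_A, (R_{a'})_A)$ with its $A$-$G_2$-module structure from Lemma~\ref{lem:AG-hom}. By Lemma~\ref{lem:Hom-rk-1} there is an $A$-$G_2$-isomorphism $E \cong R_{a'-a} \otimes_R A$, so $E$ is free of rank one over $A$, hence a finitely generated projective $A$-$G_2$-module. Because $\phi_2(a'-a) = 0$, the weight $a'-a$ lies in $\ker(\phi_2) = \im(\phi_1)$, so both $R_{a'-a}$ and $A$ carry trivial $G_3$-action; consequently $E$ has trivial $G_3$-action and, by Lemma~\ref{lem:AG-hom}(2), $E = E^{G_3} = \Hom_{AG_3}((R_a)_A, (R_{a'})_A)$.

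Next I would run the descent argument of Corollary~\ref{cor:H subgp-triv-act}. Viewing the $G_3$-trivial module $E$ as a finitely generated projective $A$-$G_1$-module and using that the $G_1$-action on $A$ is free, \cite[Theorem~4.46]{vis} identifies $E$ with the pullback $A \otimes_{A^{G_1}} \overline{E}$ of the rank-one projective $A^{G_1}$-module $\overline{E} = E^{G_1}$. The hypothesis that finitely generated projective $A^{G_1}$-modules are extended from $R$, combined with $(\dagger)$ (which makes the extending $R$-module free of rank one), yields $\overline{E} \cong A^{G_1}$ and hence $E \cong A$ as $A$-$G_2$-modules. Comparing with $E \cong R_{a'-a} \otimes_R A$ gives $R_{a'-a} \otimes_R A \cong A$; tensoring this identity over $A$ with $(R_a)_A$ and using $R_{a'-a} \otimes_R R_a \cong R_{a'}$ produces the desired isomorphism $(R_{a'})_A \cong (R_a)_A$.

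I expect the genuine obstacle to be the last step, namely establishing $E \cong A$ rather than merely $E \cong R_w \otimes_R A$ for some unspecified weight $w$. The two descriptions $E \cong A$ and $E \cong R_{a'-a} \otimes_R A$ are compatible only because every graded piece $A_c$ is free of rank one over $A^{G_1}$ --- equivalently because $\Pic(A^{G_1})$ contributes nothing --- and it is precisely here that freeness of the $G_1$-action (which makes $A$ strongly graded over $A^{G_1}$), the extension-from-$R$ hypothesis, and $(\dagger)$ are used simultaneously. The bookkeeping to confirm that the descent places the generator of $\overline{E}$ in weight zero, so that the two computations of $E$ combine into the nontrivial weight-shift isomorphism, is the point requiring the most care.
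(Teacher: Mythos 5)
Your proposal is correct and follows essentially the same route as the paper's proof: reduce via Lemma~\ref{lem:rank-1} and Proposition~\ref{prop:graded structure} to rank-one modules of weights $a,a'$ with $\phi_2(a)=\phi_2(a')$, compute $\Hom_A((R_a)_A,(R_{a'})_A)\simeq R_{a'-a}\otimes_R A$ by Lemma~\ref{lem:Hom-rk-1}, identify it with the $G_3$-invariant Hom and apply the descent argument of Corollary~\ref{cor:H subgp-triv-act} to conclude it is $\simeq A$ as an $A$-$G_2$-module, whence $R_{a'-a}\otimes_R A\simeq A$ and the weight-shift isomorphism follows. Your extra care about matching multiplicities over the fibres of $\phi_2$ and about the generator of the descended module sitting in weight zero fills in details the paper leaves implicit, but the argument is the same.
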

\begin{proof}
By Lemma~\ref{lem:rank-1} and Proposition~\ref{prop:graded structure}, 
it is enough to prove that if $F$ and $F'$ are one-dimensional free 
$R$-$G_2$-modules of constant weights $e_a$, $e_{a'}$, where $a, a' \in P_2$
such that $\phi_2(a) = \phi_2(a')$, then $F_A \simeq F'_A$ as $A$-$G_2$-modules.

As the action of $G_3$ on $A$ is trivial and $\phi_2(a) = \phi_2(a')$,
we have $\Hom_{AG_3}(F_A , F'_A) = \Hom_A (F_A , F'_A)$.  
By Lemma~\ref{lem:SES-ab-gp}, 
$\Hom_{AG_3}(F_A , F'_A) = \Hom_A (F_A , F'_A)$ as $A$-$G_2$-module
and $\Hom_A (F_A , F'_A) \simeq R_{a'-a} \otimes A$ 
as an $A$-$G_2$-module by Lemma~\ref{lem:Hom-rk-1}. 
The argument of Corollary~\ref{cor:H subgp-triv-act} shows that 
$\Hom_{AG_3} (F_A,F'_A) \simeq A$ as an $A$-$G_2$-module. 
Therefore, $R_{a'-a} \otimes A \simeq A$ and hence 
$R_{a} \otimes A \simeq R_{a'} \otimes A$ as $A$-$G_2$-modules.
\end{proof}

\vskip .3cm

\section{Toric Schemes and their quotients} \label{section:toric-sch}

Let $R$ be a commutative noetherian ring and let $G = \Spec(R[P])$ be
a diagonalizable group scheme over $R$. In this section, we recall the
notion of affine $G$-toric schemes and study their quotients for 
the $G$-action. 

\subsection{Toric schemes} \label{subsection:tor-sch}
Let $L$ be a lattice (a free abelian
group of finite rank). 
A subset of $L_{\Q}$ of the form $l^{-1}(\Q_{+})$ , 
where $l : L_{\Q} \to \Q$ is a non-zero linear functional and 
$\Q_{+} = \{r \in \Q | r \geq 0 \} $, 
is called a half-space of $L_{\Q}$. 
A {\it cone} of $L_{\Q}$ is an intersection of a finite number of half-spaces.
A cone is always assumed to be convex, polyhedral and rational 
(``rational'' means that it is generated by vectors in the lattice). 
The {\it dimension} of a cone $\sigma$ is defined to be 
the dimension of the smallest subspace of $L_{\Q}$ containing $\sigma$. 
We say that $\sigma$ is {\it strongly convex} in $L_{\Q}$ if it spans
$L_{\Q}$.
By replacing $L_{\Q}$ by its subspace $\sigma + (-1)\sigma$,
there is no loss of generality in assuming that $\sigma$ is a strongly convex 
cone in $L_{\Q}$.

The intersection $\sigma \cap L$ is clearly a cancellative, torsion-free  
monoid. Moreover, $L_{\sigma} = \sigma \cap L$ is known to be finitely-generated
and normal (see \cite[Lemma 1.3]{Danilov} and \cite[Corollary 2.24]{Gub}).
It follows from \cite[Theorem 4.40]{Gub} that the monoidal $R$-algebra 
$A= R[L_{\sigma}]$ is a normal integral domain if $R$ is so. 
The scheme $X_{\sigma} = \Spec(R[L_{\sigma}])$ is called an 
{\it affine toric scheme} over $R$.
The inclusion $\iota_{\sigma}: L_{\sigma} \inj L$ defines a Hopf algebra
map $\phi_{\sigma} : A \to R[L]{\underset{R}\otimes} A$ (the graph of 
$\iota_{\sigma}$) which is equivalent
to giving an action of the `big torus' $T_{\sigma} = \Spec(R[L])$ on 
$X_{\sigma}$. 
The inclusion $R[L_{\sigma}] \inj R[L]$ embeds 
$T_{\sigma}$ as a $T_{\sigma}$-invariant affine open subset of 
$X_{\sigma}$ where $T_{\sigma}$ acts on itself by multiplication. 

A {\it face} of $\sigma$ is its subset of the form 
$\sigma \cap l^{-1}(0)$, where $l : L_{\Q} \to \Q$ is a linear functional that 
is positive on $\sigma$. 
A face of a cone is again a cone, so for each face $\tau$ of $\sigma$, 
we have a toric scheme $X_{\tau}$ which has an action of $T_{\sigma}$
given by the inclusion $L_{\tau} \inj L$ 
and this action factors through the action of the big torus 
$T_{\tau} = \Spec(R[M])$ of $X_{\tau}$
(where $M$ is the smallest sublattice of $L$ such that $M_{\Q}$ is a 
subspace containing $\tau$) on $X_{\tau}$. 
Let $\chi$ be the characteristic function of the face $\tau$, 
i.e., the function which is $1$ on $\tau$ and $0$ outside $\tau$. 
The assignment $e_{m} \mapsto \chi(m)e_{m}$ (for $m \in L_{\sigma} $) 
extends to a surjective homomorphism of $R$-algebras 
$i_\tau: R[L_{\sigma}] \surj R[L_{\tau}]$, 
which defines a closed embedding of $X_{\tau}$ in $X_{\sigma}$. 
The natural inclusion $L_{\tau} \inj L_{\sigma}$ 
defines a retraction morphism $\pi_{\tau} : R[L_{\tau}] \to R[L_{\sigma}]$. 
Both $i_{\tau}$ and $\pi_{\tau}$ are $R$-$T_{\sigma}$-algebra morphisms 
such that the composition $i_\tau \circ \pi_{\tau}$ is the identity.

If $\tau' \subseteq \sigma$ is another face different from $\tau$ and
$\eta$ is their intersection, then we get a commutative diagram:
\begin{equation}\label{eqn:Toric-0}
\xymatrix@C1pc{
R[L_{\tau}] \ar[r]^{\pi_{\tau}} \ar[d]_{\iota_{\eta}} & R[L_{\sigma}] 
\ar[d]^{\iota_{\tau'}} 
\ar[r]^{\iota_{\tau}} & R[L_{\tau}] \ar[d]^{\iota_{\eta}} \\
R[L_{\eta}] \ar[r]_{\pi_{\eta}} & R[L_{\tau'}] \ar[r]_{\iota_{\eta}} & R[L_{\eta}]}
\end{equation}
in which the composite horizontal maps are identity.

Let $J$ denote the ideal of $R[L_{\sigma}]$ generated by  all the monomials 
$e_{m}$ with $m$ strictly inside $\sigma$.
Then $J$ is a $T_{\sigma}$-invariant ideal of $R[L_{\sigma}]$ such that
$X_{\sigma} \setminus Y = T_{\sigma}$, where $Y = \Spec({R[L_{\sigma}]}/J)$ 
(see e.g., \cite[2.6.1]{Danilov}). 

\begin{lem} \label{lem:I(Y) = J}
Let $\Delta^1$ denote the set of codimension $1$ faces of $X_{\sigma}$. 
Then the ideal $J$ is the ideal defining the closed subscheme 
${\underset{\tau \in \Delta^1}\cup} X_{\tau}$ of $X_{\sigma}$,
 i.e., $Y =  {\underset{\tau \in \Delta^1}\cup} X_{\tau}$ .
\end{lem}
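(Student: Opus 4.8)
The plan is to identify both sides as monomial ideals and compare the monomials they contain. Recall that the closed subscheme $X_{\tau} \inj X_{\sigma}$ is cut out by $\ker(i_{\tau})$, and that the scheme-theoretic union $\bigcup_{\tau \in \Delta^1} X_{\tau}$ is by definition cut out by the ideal $\bigcap_{\tau \in \Delta^1} \ker(i_{\tau})$. Thus the assertion $Y = \bigcup_{\tau \in \Delta^1} X_{\tau}$ is equivalent to the identity of ideals
\[
J = \bigcap_{\tau \in \Delta^1} \ker(i_{\tau}),
\]
so the first step is to compute each $\ker(i_{\tau})$ explicitly.

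Since $i_{\tau}(e_m) = \chi(m)\, e_m$ sends each basis monomial either to itself (when $m \in \tau$) or to $0$ (when $m \notin \tau$), its kernel is the free $R$-submodule $I_{\tau}$ spanned by $\{e_m \mid m \in L_{\sigma} \setminus \tau\}$. To see that $I_{\tau}$ is genuinely a (monomial) ideal, I would check that $L_{\sigma} \setminus \tau$ is stable under addition by $L_{\sigma}$: if $m \notin \tau$, $p \in L_{\sigma}$ and $m + p \in \tau$, then the defining property of a face (a face of a cone contains every cone-summand of each of its elements) forces $m \in \tau$, a contradiction. Hence $I_{\tau} = \langle e_m \mid m \in L_{\sigma} \setminus \tau \rangle$ is a monomial ideal whose monomials are exactly the $e_n$ with $n \in L_{\sigma} \setminus \tau$.

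Next I would reduce to a purely combinatorial comparison. Both $J$ and every $I_{\tau}$ are monomial ideals, hence so is $\bigcap_{\tau} I_{\tau}$, and a monomial ideal is determined by the set of monomials it contains. For $J$, since $\sigma^{\circ} + \sigma \subseteq \sigma^{\circ}$ (the interior of a full-dimensional convex cone absorbs the cone), its monomials are precisely the $e_n$ with $n$ in the interior $\sigma^{\circ}$. For $\bigcap_{\tau} I_{\tau}$, a monomial $e_n$ lies in it iff $n \in L_{\sigma}$ and $n \notin \tau$ for every facet $\tau \in \Delta^1$. Therefore the desired equality $J = \bigcap_{\tau} I_{\tau}$ will follow once I know that, for $n \in \sigma \cap L$,
\[
n \in \sigma^{\circ} \iff n \notin \tau \text{ for every } \tau \in \Delta^1.
\]

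The remaining step, which I expect to be the crux, is the convex-geometry fact that the boundary of the strongly convex (hence full-dimensional) rational polyhedral cone $\sigma$ is exactly the union of its facets, i.e.\ $\partial \sigma = \bigcup_{\tau \in \Delta^1} \tau$. I would prove it by writing $\sigma = \{x \in L_{\Q} \mid l_i(x) \ge 0,\ i = 1, \dots, s\}$ for the finite set of facet-defining supporting functionals, so that $\sigma^{\circ} = \{x \mid l_i(x) > 0 \ \forall i\}$ and the facet $\tau_i$ equals $\sigma \cap l_i^{-1}(0)$. Then for $n \in \sigma$ one has $n \notin \sigma^{\circ}$ iff $l_i(n) = 0$ for some $i$, iff $n$ lies in some facet; this is exactly the displayed equivalence. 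Feeding it back through the monomial comparison yields $J = \bigcap_{\tau \in \Delta^1} \ker(i_{\tau})$, and hence the lemma.
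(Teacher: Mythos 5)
Your proof is correct and follows essentially the same route as the paper's: identify each $\mathcal{I}(X_{\tau}) = \ker(i_{\tau})$ as the monomial ideal spanned by $\{e_m \mid m \in (\sigma \setminus \tau) \cap L\}$, take the intersection over $\tau \in \Delta^1$, and observe that $\sigma \setminus \bigcup_{\tau \in \Delta^1} \tau$ is the interior of $\sigma$, whose monomials generate $J$. The paper states this in two sentences; you have merely supplied the routine verifications (that these spans are ideals and that $\partial\sigma$ is the union of the facets) that the paper leaves implicit.
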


\begin{proof}
The ideal $\mathcal{I}(X_{\tau})$ defining $X_{\tau}$
 is generated by all monomials $e_m$ with 
$m \in (\sigma \setminus \tau) \cap L$. 
Since $\mathcal{I}({\underset{\tau \in \Delta^1}\cup}  X_{\tau}) = 
{\underset{\tau \in \Delta^1}\cap} \mathcal{I}(X_{\tau})$, the lemma follows.
\end{proof}

\begin{lem} \label{lem:large N}
For any $m \in L$, there is a sufficiently large integer $N$ 
such that $f/e_{m} \in R[L_{\sigma}]$ for any $f \in J^{N}$.
\end{lem}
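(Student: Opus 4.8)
The plan is to exploit the facet description of the cone $\sigma$ and to observe that a monomial appearing in an element of $J^{N}$ is forced to lie so ``deep'' inside $\sigma$ that translating it by the fixed vector $-m$ cannot move it out of $\sigma$. First I would choose finitely many integral linear functionals $\ell_1, \dots , \ell_k \colon L \to \Z$ realizing $\sigma$ as an intersection of half-spaces, $\sigma = \{x \in L_{\Q} \mid \ell_i(x) \ge 0 \text{ for all } i\}$; this is possible because $\sigma$ is a rational polyhedral cone, and since $\sigma$ spans $L_{\Q}$ it is full-dimensional, so the $\ell_i$ may be taken to be the inward facet normals and the relative interior of $\sigma$ is exactly $\{x \mid \ell_i(x) > 0 \text{ for all } i\}$. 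Because each $\ell_i$ is integer-valued on $L$, any lattice point $n$ strictly inside $\sigma$ satisfies $\ell_i(n) \ge 1$ for every $i$.

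Next I would analyze $J^{N}$. Each generator of $J^{N}$ is a product $e_{n_1}\cdots e_{n_N} = e_{n_1 + \cdots + n_N}$ with every $n_j$ strictly inside $\sigma$, and a general element of $J^{N}$ is an $R[L_{\sigma}]$-linear combination of such generators. Hence every monomial $e_{b}$ occurring in an element $f \in J^{N}$ has the form $b = p + n_1 + \cdots + n_N$ with $p \in L_{\sigma}$ and each $n_j$ interior. Applying $\ell_i$ and using $\ell_i(p) \ge 0$ (as $p \in \sigma$) together with $\ell_i(n_j) \ge 1$ yields the uniform lower bound $\ell_i(b) \ge N$ for all $i$.

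Finally, I would fix $N$ with $N \ge 1$ and $N \ge \ell_i(m)$ for every $i$ (for instance $N = \max(1, \max_i \ell_i(m))$). Then for any $f \in J^{N}$ and any monomial $e_{b}$ appearing in $f$ we get $\ell_i(b - m) = \ell_i(b) - \ell_i(m) \ge N - \ell_i(m) \ge 0$ for all $i$, so $b - m \in \sigma \cap L = L_{\sigma}$ and therefore $e_{b-m} \in R[L_{\sigma}]$. Writing $f/e_m = e_{-m} f$ inside $R[L]$, this exhibits $f/e_m$ as an $R$-combination of such monomials, hence $f/e_m \in R[L_{\sigma}]$, as required. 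There is no serious obstacle here; the only point demanding care is that the supporting functionals can be chosen integral, so that interiority of each factor contributes a strict gain of at least $1$ — it is precisely this that converts the $N$ factors of $J^{N}$ into the bound $\ell_i(b) \ge N$ which absorbs the fixed shift by $m$.
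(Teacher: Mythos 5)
Your proof is correct and takes essentially the same approach as the paper's: both establish a uniform positive lower bound $\ell_i(n)\ge c$ for every lattice point $n$ strictly inside $\sigma$ (you take $c=1$ by choosing integral facet normals, while the paper takes $c=s$, the minimum positive value of its functionals on a generating set of $L_\sigma$), and then choose $N$ so that $Nc\ge \ell_i(m)$ for all $i$.
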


\begin{proof}
It is enough to prove the lemma when $f = \prod\limits_{k=1}^N e_{m_k}$ 
with $m_k$ strictly inside $\sigma$. 
Let $v_1, \cdots ,v_p$ be generators of $L_{\sigma}$ 
and let $l_1, \cdots, l_q$ be linear functionals defining $\sigma$.
Set $s= \hbox{min}_{i,j} \{ l_i(v_j) > 0 \}$. 
Since $m_k$ lies strictly inside $\sigma$, $l_i(m_k) > 0$ for any $i$.
Since $m_k$ is a linear combination of $v_j$ with non-negative integer 
coefficients, we get $l_i(m_k) \geq s$ for any $i$. 
Therefore, $l_i(\sum\limits_{k=1}^N m_k -m) \geq Ns - l_i(m)$ for any $i$.
Since $s$ is positive, we must have $l_i(\sum\limits_{k=1}^N m_k -m) \geq 0$ 
for any $i$ if $N$ is sufficiently large. That is,
$\sum\limits_{k=1}^N m_k -m \in L_{\sigma}$ independent of the choice of $m_k$'s.
\end{proof}
 
 \subsection{G-toric schemes and their quotients} 
\label{section:Toric-G-sch}
 Let $\sigma$ be a strongly convex, rational, polyhedral cone in $L_{\Q}$, 
 where $L$ is a lattice of finite rank. Let $A = R[L_{\sigma}]$ and 
$X = X_{\sigma} = \Spec(A)$.
Let $G = \Spec(R[P])$ be a diagonalizable group scheme over $R$.

\begin{defn} \label{defn:aff-tor-G-sch}
An {\it affine $G$-toric scheme} is an affine toric scheme $X_{\sigma}$ 
as above with a $G$-action such that the action of $G$ on 
$X_{\sigma}$ factors through the action of $T_{\sigma}$.
\end{defn}

Since $\Spec(R)$ is connected, a $G$-toric scheme structure on $X_{\sigma}$
is equivalent to having a map of monoids $\psi: L \to P$ 
such that the $R$-$G$-algebra structure on $A = R[L_{\sigma}]$ is defined by the
composite action map
\begin{equation}\label{eqn:G-toric-0}
\phi_P: A \to R[L]{\underset{R}\otimes} A 
\xrightarrow{\psi \otimes id} R[P]{\underset{R}\otimes} A.
\end{equation}

\begin{exm} \label{exm:Linear-action}
We shall say that $G$ acts linearly on a polynomial algebra 
$A = R[t_1, \cdots , t_n]$, if
there is a free $R$-$G$-module $(V, \rho)$ of rank $n$ such that
$A = \Sym_R(V)$. In this case, we also say that $G$ acts linearly on $\Spec(A)
= \A^n_R$.

Assume that $R$ satisfies $(\dagger)$.
Let $A = R[x_1, \cdots ,x_n, y_1, \cdots , y_r]$ be a  
polynomial $R$-algebra with a linear $G$-action with $n, r \ge 0$. Using 
Lemma \ref{lem:rank-1}, we can assume that the $G$-action on $A$ is given by 
$\phi(x_i) = e_{\lambda_i} \otimes x_i$ for $1 \le i \le n$ and 
$\phi(y_j) = e_{\gamma_j} \otimes y_j$ for $1 \le j \le r$. 
\begin{enumerate}
\item
Let $A = R[x_1, \cdots , x_n]$. Consider the cone $\sigma = \Q_+^n$ of $L_{\Q}$ 
where $L$ is the lattice $\Z^n$. Then $A = R[\sigma \cap L]$ and
$\Spec(A)$ is an affine $G$-toric scheme via the morphism
$\psi: \Z^n_{+} \to P$ given by $\psi(\alpha_i) = \lambda_i$,
where $\{\alpha_1, \cdots , \alpha_n\}$ is the standard basis of $\Z^n_{+}$.
\item
Let $A = R[x_1, \cdots , x_n, y^{\pm 1}_1, \cdots , y^{\pm 1}_r]$. Then it can be 
seen as in (1) above that $\Spec(A)$ is an affine $G$-toric scheme by 
considering the lattice $L = \Z^{n+r}$ and the cone 
$\sigma = \Q_+^n \oplus \Q^r$ in $L_{\Q}$.
\end{enumerate}
\end{exm}

\begin{lem}\label{lem:Toric-0}
Let $\theta: L \to P$ be a homomorphism from $L$ to a finitely generated abelian
group and let $M = {\rm Ker}(\theta)$. Then $R[\sigma \cap M]$ is a finitely
generated $R$-algebra. 
\end{lem}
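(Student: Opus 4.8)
The plan is to reduce the statement to Gordan's lemma. Since $R[\sigma \cap M]$ is finitely generated as an $R$-algebra precisely when the monoid $\sigma \cap M$ is finitely generated (a finite set of monoid generators $w_1, \dots, w_s$ produces the $R$-algebra generators $e_{w_1}, \dots, e_{w_s}$), it suffices to prove that $\sigma \cap M$ is a finitely generated monoid.

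First I would record the elementary structure of $M$. As $M = {\rm Ker}(\theta)$ is a subgroup of the lattice $L$, it is torsion-free and of finite rank, hence itself a lattice. Writing $M_{\Q} = M \otimes \Q \subseteq L_{\Q}$ and tensoring the exact sequence $0 \to M \to L \to {\rm im}(\theta) \to 0$ with the flat $\Z$-module $\Q$, one gets $M_{\Q} = {\rm Ker}(\theta_{\Q})$ for $\theta_{\Q}: L_{\Q} \to P \otimes \Q$. In particular $M_{\Q}$ is a rational linear subspace of $L_{\Q}$ and $M$ is a full-rank lattice inside it.

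Next I would intersect the cone with this subspace. Since $M \subseteq M_{\Q}$, we have $\sigma \cap M = (\sigma \cap M_{\Q}) \cap M$, so set $\tau := \sigma \cap M_{\Q}$. Writing $\sigma$ as a finite intersection of half-spaces $l_j^{-1}(\Q_+)$ for rational linear functionals $l_j$ on $L_{\Q}$, the restrictions $l_j|_{M_{\Q}}$ are again rational functionals on $M_{\Q}$, since they take rational values on the lattice $M$. Hence $\tau$ is the intersection of finitely many rational half-spaces of $M_{\Q}$ (discarding those $l_j$ that vanish identically on $M_{\Q}$), i.e. a rational polyhedral cone in $M_{\Q}$, although it need no longer be strongly convex.

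Finally, Gordan's lemma applied to the rational polyhedral cone $\tau$ relative to the lattice $M$ (cf. \cite[Corollary~2.24]{Gub} and \cite[Lemma~1.3]{Danilov}) shows that $\tau \cap M = \sigma \cap M$ is a finitely generated monoid, which gives the result. The hard part—really the only non-formal point—will be the verification that $\tau$ is rational with respect to the sublattice $M$ rather than $L$; once the functionals cutting out $\sigma$ are restricted to $M_{\Q}$ this is immediate, and the possible loss of strong convexity does no harm, because Gordan's lemma holds for arbitrary rational polyhedral cones.
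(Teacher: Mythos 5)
Your proposal is correct and follows essentially the same route as the paper: both reduce to showing that $\sigma\cap M=(\sigma\cap M_{\Q})\cap M$ is a finitely generated monoid, observe that $\tau=\sigma\cap M_{\Q}$ is a rational polyhedral cone in $M_{\Q}$ with $M$ a lattice therein, and invoke Gordan's lemma. The only cosmetic difference is that the paper first reduces by induction to the case of a single half-space, whereas you restrict all the defining functionals to $M_{\Q}$ at once, which is if anything cleaner.
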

\begin{proof}
By replacing $P$ by the image of $\theta$, we can assume that $\theta$ is an
epimorphism. This yields an exact sequence:
\begin{equation}\label{eqn:Toric-0-1}
0 \to M_{\Q} \xrightarrow{i_M} L_{\Q} \to P_{\Q} \to 0.
\end{equation}

We write $\sigma = \stackrel{r}{\underset{i = 1}\cap} \sigma_i$, where
$\sigma_i = l^{-1}_i(\Q_{+})$ is a half-space. By taking repeated intersections
of $M$ with these $\sigma_i$'s and using induction, we easily reduce
to the case when $r =1$. We set $\tau = \sigma \cap M_{\Q}$. 
Then $\tau = l^{-1}(\Q_{+}) \cap M_{\Q} =
m^{-1}(\Q_{+})$, where $m = l \circ i_M$. In particular, $\tau$ is a cone
in $M_{\Q}$. Furthermore, as $M \inj L$, it is a free abelian group and hence
a lattice in $M_{\Q}$. It follows from Gordon's lemma (see, for
instance, \cite[Proposition~1.2.1]{Fulton}) that $\tau \cap M$ is a 
finitely generated monoid. Therefore, $\sigma \cap M = \sigma \cap M_{\Q} \cap M
= \tau \cap M$ is a finitely generated monoid. 
Since any generating set of $\sigma \cap M$ generates $R[\sigma \cap M]$ 
as an $R$-algebra, the lemma follows.
\end{proof}
\enlargethispage{25pt}

Combining Lemma~\ref{lem:Toric-0} and Proposition~\ref{prop:G-inv}, we get

\begin{cor}\label{cor:Toric-1}
Let $A^G$ denote the ring of $G$-invariants of $A$ with respect to $\phi_P$.
Then $A^G$ is a finitely generated $R$-algebra.
\end{cor}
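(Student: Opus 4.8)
The plan is to combine the two cited ingredients essentially verbatim, the only genuine task being to match the toric $G$-action against the abstract set-up of \S\ref{section:Proj-inv}. First I would unwind the action map $\phi_P$ of \eqref{eqn:G-toric-0}. The first arrow $A \to R[L]\otimes A$ is the graph of the inclusion $\iota_\sigma \colon L_\sigma \inj L$, so it sends $e_m \mapsto e_m \otimes e_m$; composing with $\psi \otimes \id$ gives $\phi_P(e_m) = e_{\psi(m)} \otimes e_m$ for every $m \in L_\sigma$. Setting $Q = L_\sigma = \sigma \cap L$ and $u = \psi|_{L_\sigma} \colon Q \to P$, this is exactly the $R$-$G$-algebra structure \eqref{eqn:Grp-Alg-0}. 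Hence Proposition~\ref{prop:G-inv} applies and yields an isomorphism of $R$-algebras $R[Q'] \xrightarrow{\simeq} A^G$, where $Q' = \ker(u)$.

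The second step is to identify $Q'$ with a cone-lattice intersection. Writing $M = \ker(\psi \colon L \to P)$, we have $Q' = \{m \in L_\sigma : \psi(m) = 0\} = (\sigma \cap L) \cap M = \sigma \cap M$, the last equality because $M \subseteq L$. Thus $A^G \simeq R[\sigma \cap M]$ as $R$-algebras.

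Finally I would invoke Lemma~\ref{lem:Toric-0} with $\theta = \psi$: it asserts precisely that $R[\sigma \cap M]$ is a finitely generated $R$-algebra. Combining the isomorphism of the previous paragraph with this lemma gives the claim.

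The point to stress is that there is no real obstacle here: the substantive content, namely finite generation of the monoid $\sigma \cap M$ (which rests on Gordan's lemma), has already been isolated in Lemma~\ref{lem:Toric-0}, and the comparison of the invariants with a monoid algebra is Proposition~\ref{prop:G-inv}. The only verification left is the bookkeeping identification $\ker(\psi|_{L_\sigma}) = \sigma \cap M$, confirming that restricting the monoid map to $L_\sigma$ and then taking its kernel produces the same monoid as intersecting the cone with the kernel lattice of $\psi$; this is immediate from $M \subseteq L$.
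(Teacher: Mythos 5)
Your proof is correct and is precisely the paper's argument: the corollary is stated there as an immediate consequence of combining Proposition~\ref{prop:G-inv} (identifying $A^G$ with $R[\ker(\psi|_{L_\sigma})]$) and Lemma~\ref{lem:Toric-0} (finite generation of $R[\sigma\cap M]$ via Gordan's lemma). The bookkeeping you supply — matching $\phi_P$ with the structure of \eqref{eqn:Grp-Alg-0} and checking $\ker(\psi|_{L_\sigma})=\sigma\cap M$ — is exactly what the paper leaves implicit.
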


\begin{lem}\label{lem:Toric-2}
Let $B$ be any flat $A^G$-algebra. Then
$B = \left(A {\underset{A^G}\otimes} B\right)^G$.
\end{lem}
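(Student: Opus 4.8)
The plan is to exhibit the invariants functor $(-)^G$ as a kernel and then use the flatness of $B$ to commute this kernel with base change along $A^G \to A$. Writing $\phi_P\colon A \to R[P]\otimes_R A$ for the coaction defining the $R$-$G$-algebra structure and $\iota\colon A \to R[P]\otimes_R A$, $\iota(a)=1\otimes a$, for the trivial coaction, the very definition of invariants in \S~\ref{section:Inv} gives $A^G = \ker(\phi_P - \iota)$. The first thing I would check is that $\phi_P - \iota$ is a map of $A^G$-modules, where $A^G$ acts on the target $R[P]\otimes_R A$ through the second factor: this is immediate because $\phi_P$ is an $R$-algebra homomorphism with $\phi_P(c)=1\otimes c$ for $c\in A^G$, so $\phi_P(ca)=(1\otimes c)\phi_P(a)$, and the analogous identity for $\iota$ is obvious. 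Thus the left exact sequence $0 \to A^G \to A \xrightarrow{\phi_P-\iota} R[P]\otimes_R A$ is one of $A^G$-modules.

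Next I would apply $(-)\otimes_{A^G} B$. Since $B$ is flat over $A^G$, this functor is exact and in particular preserves the kernel above, so $B = A^G\otimes_{A^G}B = \ker\big((\phi_P-\iota)\otimes_{A^G}\id_B\big)$. Because $R[P]$ is a free $R$-module (on the basis $\{e_b\}_{b\in P}$), there is a canonical identification $(R[P]\otimes_R A)\otimes_{A^G} B \simeq R[P]\otimes_R (A\otimes_{A^G}B)$; under it, $(\phi_P-\iota)\otimes_{A^G}\id_B$ becomes $\phi_P^B-\iota^B$, where $\phi_P^B$ is the coaction for the base-changed $G$-action on $A\otimes_{A^G}B$ (with $B$ carrying the trivial action) and $\iota^B$ the trivial coaction. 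Since $\ker(\phi_P^B-\iota^B)=(A\otimes_{A^G}B)^G$, combining the two kernel descriptions yields $B \simeq (A\otimes_{A^G}B)^G$, which is the assertion.

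The only genuine bookkeeping --- and the step I would be most careful about --- is the identification of $(\phi_P-\iota)\otimes_{A^G}\id_B$ with $\phi_P^B-\iota^B$ on $A\otimes_{A^G}B$: one must verify that the $A^G$-module structure on $R[P]\otimes_R A$ through which the tensor product is formed is exactly the one for which the freeness of $R[P]$ pulls the factor $B$ inside, and that the resulting map is the coaction of the induced action rather than some twist. For the diagonalizable $G$ at hand this can be checked transparently via the weight decomposition: by Proposition~\ref{prop:graded structure}, $A=\bigoplus_{a\in P}A_a$ with $A_0=A^G$ and each $A_a$ a direct summand over $A^G$; tensoring gives $A\otimes_{A^G}B=\bigoplus_{a\in P}(A_a\otimes_{A^G}B)$ as a $P$-graded $R$-$G$-module whose degree-$a$ piece is $A_a\otimes_{A^G}B$, so that its invariants are the degree-$0$ piece $A_0\otimes_{A^G}B=B$. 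I would use this grading picture both as a sanity check on the flat base-change computation and, if a fully self-contained argument is preferred, as a direct replacement for it.
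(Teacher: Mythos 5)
Your proof is correct and follows essentially the same route as the paper: the paper also writes $A^G$ as the kernel of $\phi_P-\gamma_P$ (your $\phi_P-\iota$), tensors the resulting left-exact sequence with $B$ over $A^G$ using flatness, and identifies the base-changed map with the coaction-minus-trivial-coaction map on $A\otimes_{A^G}B$. The extra care you take with the identification $(R[P]\otimes_R A)\otimes_{A^G}B\simeq R[P]\otimes_R(A\otimes_{A^G}B)$, and the weight-decomposition sanity check, are sound but not needed beyond what the paper already does.
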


\begin{proof}   
Set $B' = A {\underset{A^G}\otimes} B$.
To prove this lemma, we need to recall how $G$-acts on $B'$.
The map $\phi_P: A \to R[P]{\underset{R}\otimes} A$ induces a $B'$-algebra
map
\[
B' =  A {\underset{A^G}\otimes} B \xrightarrow{\phi_P \otimes 1_B}
(R[P]{\underset{R}\otimes} A) {\underset{A^G}\otimes} B.
\]
This can also be written as $\phi_{P,B}: B' \to R[P]{\underset{R}\otimes} B'$
with $\phi_{P,B} = \phi_P \otimes 1_B$ which gives a $G$-action on 
$\Spec(B')$.

Let $\gamma_P: A \to R[P]{\underset{R}\otimes} A$ be the ring homomorphism
$\gamma_P(a) = 1 \otimes a$ which gives the projection map $G \times X \to X$.
Set $\gamma_{P,B} = \gamma_P \otimes 1_B: B' \to  R[P]{\underset{R}\otimes} B'$.
It is clear that 
\[
\gamma_{P,B}(a \otimes b) =
\gamma_P(a) \otimes b = 1 \otimes a \otimes b = 1 \otimes (a \otimes b).
\]
Since $B'$ is generated by elements of the form $a \otimes b$ with $a \in A$
and $b \in B$, we see that $\gamma_{P, B}(\alpha) = 1 \otimes \alpha$ for
all $\alpha \in B'$.

Since $A = R[L_\sigma]$ is flat (in fact free) over $R$ 
(see Lemma~\ref{lem:Toric-0}), the map $\gamma_P: A \to
R[P]{\underset{R}\otimes} A$ is injective. Furthermore, there is an
exact sequence (by definition of $A^G$)
\begin{equation}\label{eqn:Toric-2-1} 
0 \to A^G \to A \xrightarrow{\phi_P - \gamma_P} 
R[P]{\underset{R}\otimes} A.
\end{equation}

As $B$ is flat over $A^G$, the tensor product with $B$
over $A^G$ yields an exact sequence
\begin{equation}\label{eqn:Toric-2-2} 
0 \to B \to B' \xrightarrow{(\phi_P \otimes 1_B) - (\gamma_P \otimes 1_B)} 
R[P]{\underset{R}\otimes} B'.
\end{equation}
Since $\phi_P \otimes 1_B = \phi_{P,B}$ and $\gamma_P \otimes 1_B = \gamma_{P,B}$,
we get an exact sequence
\begin{equation}\label{eqn:Toric-2-3} 
0 \to B \to B' \xrightarrow{\phi_{P,B} - \gamma_{P,B}} 
R[P]{\underset{R}\otimes} B'.
\end{equation}
But this is equivalent to saying that $B = (B')^G$.
\end{proof}

\begin{lem}\label{lem:Toric-3}
Let $I, I' \subseteq A$ be inclusions of $A$-$G$-modules such that
$I + I' = A$. Then the sequence
\[
0 \to I^G \to A^G \to (A/I)^G \to 0
\]
is exact and $I^G + I'^G = A^G$.
In particular, the map $\Spec((A/I)^G) \inj \Spec(A^G)$ is 
a closed immersion and $\Spec((A/{I})^G) \cap \Spec((A/{I'})^G) = \emptyset$
in $\Spec(A^G)$.
\end{lem}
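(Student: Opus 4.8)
The plan is to reduce the entire statement to the exactness of the $G$-invariants functor for diagonalizable group schemes, which is available from \corref{cor:SES-mod}~(2), combined with elementary commutative algebra on $\Spec$. Since $I$ and $I'$ are assumed to be $A$-$G$-submodules of $A$, they are $G$-invariant ideals, so the quotients $A/I$ and $A/I'$ are naturally $A$-$G$-algebras and all the invariant subobjects appearing below are genuine rings; I would flag this compatibility at the outset, since it is what licenses every subsequent application of \corref{cor:SES-mod}.

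First I would treat the displayed exact sequence. Regarding $I \subseteq A$ as an inclusion of $R$-$G$-modules gives a short exact sequence
\[
0 \to I \to A \to A/I \to 0,
\]
and applying \corref{cor:SES-mod}~(2) immediately yields exactness of
\[
0 \to I^G \to A^G \to (A/I)^G \to 0 .
\]
In particular the restriction map $A^G \to (A/I)^G$ is surjective with kernel $A^G \cap I = I^G$, so $(A/I)^G \cong A^G/I^G$ as $R$-algebras.

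Next, for the identity $I^G + I'^G = A^G$, I would exploit the comaximality $I + I' = A$ through the Mayer--Vietoris-type short exact sequence of $A$-$G$-modules
\[
0 \to I \cap I' \to I \oplus I' \xrightarrow{(x,y) \mapsto x + y} A \to 0,
\]
whose kernel is the anti-diagonal copy $\{(x,-x) : x \in I \cap I'\}$. Applying the exact invariants functor once more, and using that $G$-invariants commute with finite direct sums (so $(I \oplus I')^G = I^G \oplus I'^G$), produces a surjection $I^G \oplus I'^G \to A^G$ with image $I^G + I'^G$; surjectivity forces $I^G + I'^G = A^G$.

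Finally, for the geometric conclusions: since $(A/I)^G \cong A^G/I^G$ is a quotient ring of $A^G$, the induced morphism $\Spec((A/I)^G) \to \Spec(A^G)$ is the closed immersion onto $V(I^G)$, and symmetrically $\Spec((A/I')^G)$ is the closed subscheme $V(I'^G)$. Their scheme-theoretic intersection is $V(I^G + I'^G) = V(A^G) = \emptyset$ by the previous step. The only points that genuinely demand care are the two already noted — that the $A$-$G$-submodule hypothesis makes $A/I$, $A/I'$ into $A$-$G$-algebras, and that the kernel of $A^G \to (A/I)^G$ is exactly $I^G$ — and both are immediate from the definitions; so I do not expect any serious obstacle, the lemma being essentially a repackaging of the exactness of invariants for diagonalizable $G$.
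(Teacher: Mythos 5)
Your proof is correct and follows essentially the same route as the paper: the paper likewise observes that $I+I'=A$ means $I\oplus I'\to A$ is surjective and then invokes Corollary~\ref{cor:SES-mod}(2) for both assertions. Your write-up just makes explicit the intermediate steps (the identification $(A/I)^G\cong A^G/I^G$ and the compatibility of invariants with finite direct sums) that the paper leaves implicit.
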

\begin{proof}
The assumption $I + I' = A$ is equivalent to saying that the map
$I \oplus I' \to A$ is surjective.
The lemma is now an immediate consequence of Corollary~\ref{cor:SES-mod}(2).
\end{proof}
\enlargethispage{25pt}
Combining the above lemmas, we obtain the following.
We refer to \cite[\S~0.1]{GIT} for the terms used in this result.

\begin{prop}\label{prop:Toric-4} 
Let $X = X_{\sigma}$ be a $G$-toric scheme over $R$ as above. Then a
categorical quotient in $\Sch_S$, $p:X \to X'$ for $G$-action in the sense of 
\cite[Definition~0.5]{GIT} exists. Moreover, the following hold.
\begin{enumerate}
\item
If $Z \subseteq X$ is a $G$-invariant closed subscheme, then $p(Z)$ is a 
closed subscheme of $Y$. 
\item
If $Z_1, Z_2 \subseteq X$ are $G$-invariant closed subsets with
$Z_1 \cap Z_2 = \emptyset$, then $p(Z_1) \cap p(Z_2) = \emptyset$.
\item
The map $p: X \to X'$ is a uniform categorical quotient in $\Sch_S$.
\item
The quotient map $p$ is submersive.
\end{enumerate}
\end{prop}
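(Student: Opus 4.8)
The plan is to take $X'=\Spec(A^G)$ and to let $p\colon X\to X'$ be the morphism induced by the inclusion $A^G\hookrightarrow A$. By Corollary~\ref{cor:Toric-1} the ring $A^G$ is a finitely generated $R$-algebra, so $X'\in\Sch_S$ and $p$ is affine; it is $G$-invariant since $\phi_P$ and $\gamma_P$ agree on $A^G$ by the defining sequence \eqref{eqn:Toric-2-1}. The one structural fact I would exploit throughout is that, $G$ being diagonalizable, Proposition~\ref{prop:graded structure} gives a $P$-grading $A=\bigoplus_{a\in P}A_a$ with $A^G=A_0$, so the projection $A\to A_0$ killing the pieces $A_a$ with $a\neq 0$ is an $A^G$-linear retraction of $A^G\hookrightarrow A$. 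Thus $A^G$ is an $A^G$-module direct summand of $A$, from which $p$ is surjective; and because this splitting is preserved by $-\otimes_{A^G}B$, the base change of $p$ along any $\Spec(B)\to X'$ stays surjective. Finally, since the $P$-grading is compatible with localization at degree-zero elements — a case of the flat base change in Lemma~\ref{lem:Toric-2} — the identity $A^G=A_0$ sheafifies to an isomorphism $\sO_{X'}\xrightarrow{\simeq}(p_*\sO_X)^G$.

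From this I would read off (1), (2) and (4). For (1), if $Z=V(I)$ with $I$ a $G$-invariant ideal, then Corollary~\ref{cor:SES-mod}(2) applied to $0\to I\to A\to A/I\to 0$ gives $(A/I)^G=A^G/I^G$, so $p|_Z$ factors as $\Spec(A/I)\twoheadrightarrow\Spec\bigl(A^G/I^G\bigr)=V(I^G)\hookrightarrow X'$, the first arrow being surjective by the retraction for $A/I$; hence $p(Z)=V(I^G)$ is the closed subscheme $\Spec(A^G/I^G)$. For (2), if $Z_i=V(I_i)$ are disjoint invariant closed sets then $I_1+I_2=A$, so Lemma~\ref{lem:Toric-3} yields $I_1^G+I_2^G=A^G$ and therefore $p(Z_1)\cap p(Z_2)=V(I_1^G)\cap V(I_2^G)=\emptyset$. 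For (4), any $T\subseteq X'$ with $p^{-1}(T)$ closed has $p^{-1}(T)$ invariant, so by (1) its image is closed, and surjectivity gives $T=p(p^{-1}(T))$; hence $p$ is submersive.

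It remains to prove the categorical quotient property and its uniform version. For an affine target $Z=\Spec(C)$, the equalizer description \eqref{eqn:Toric-2-1} identifies $G$-invariant morphisms $X\to Z$ with ring maps $C\to A$ whose image lies in $A^G$, i.e.\ with morphisms $X'\to Z$, which is exactly the required universal property. The facts assembled above say precisely that $p$ is a \emph{good quotient} in the sense of \cite{GIT}: it is affine and $G$-invariant, $\sO_{X'}=(p_*\sO_X)^G$, it carries invariant closed sets to closed sets, and it separates disjoint invariant closed sets. I would then invoke the standard principle that a good quotient is a universal — in particular uniform — categorical quotient; for a general $Z$ this amounts to checking that an invariant $f\colon X\to Z$ descends set-theoretically (using (2) to see that $p(x_1)=p(x_2)$ forces $\overline{Gx_1}\cap\overline{Gx_2}\neq\emptyset$), that the resulting map is continuous (by (1)), and that it is a morphism on each $f^{-1}(Z_j)$ over an affine cover $\{Z_j\}$ of $Z$ (by the sheaf identity $\sO_{X'}=(p_*\sO_X)^G$, which supplies the factorization $C_j\to\sO_X(f^{-1}Z_j)^G=\sO_{X'}(g^{-1}Z_j)$). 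For the uniform statement I would take a flat $A^G$-algebra $B$, set $A'=A\otimes_{A^G}B$, and note via Lemma~\ref{lem:Toric-2} that $(A')^G=B$; since $A'=\bigoplus_a A'_a$ with $A'_0=B$ inherits the same grading, retraction, and exactness of invariants, the base-changed map $\Spec(A')\to\Spec(B)$ is again a good quotient of the present type, hence categorical.

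The step I expect to be the main obstacle is the descent of an invariant morphism to a \emph{non-affine} target: while the affine case is immediate, one must combine the separation property (2), surjectivity, and the sheaf-level identity $\sO_{X'}=(p_*\sO_X)^G$ to factor $f$ continuously and then regularly over an affine cover of $Z$. Once $p$ is recognized as a good quotient and the compatibility of invariants with flat base change (Lemma~\ref{lem:Toric-2}) is in hand, the universal/uniform categorical property and submersiveness follow formally.
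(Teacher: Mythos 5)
Your proof is correct and follows essentially the same route as the paper: $X'=\Spec(A^G)$, the equalizer sequence \eqref{eqn:Toric-2-1} for the quotient property, Lemma~\ref{lem:Toric-3} for (1)--(2), Lemma~\ref{lem:Toric-2} for uniformity, and the formal good-quotient argument for submersiveness and non-affine targets (where the paper instead cites the remarks in \cite[\S~0.2]{GIT}). The only addition is that you make explicit, via the graded retraction $A\to A_0=A^G$, the surjectivity of $p$ needed to identify $p(Z)$ with $\Spec((A/I)^G)$ — a detail the paper leaves implicit.
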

\begin{proof}
We take $X' = \Spec(A^G)$. It follows from Lemma~\ref{lem:Toric-0} that $X'$
is an affine scheme of finite type over $R$.
The fact that $p: X \to X'$ given by the inclusion $A^G \inj A$ is
a categorical quotient follows at once from the
exact sequence ~\eqref{eqn:Toric-2-1}. The universality of $p$ with respect
to $G$-invariant maps $p':Y' \to X'$ of affine $G$-schemes with trivial $
G$-action on $Y'$ also follows immediately from ~\eqref{eqn:Toric-2-1}.
The property (1) and (2) are direct consequences of Lemma~\ref{lem:Toric-3}.
To prove (3), let $Y' \to X'$ be a flat morphism between finite type 
$R$-schemes. To show that 
$p': Y' {\underset{X'}\times} X \to Y'$ is a categorical quotient, we
can use the descent argument of \cite[Remark~8, \S~0.2]{GIT} to reduce to
the case when $Y'$ is affine. In this case, the desired property 
follows at once from Lemma~\ref{lem:Toric-2}.
The item (4) follows from (1), (2), (3)  and \cite[Remark~6, \S~0.2]{GIT}. 
\end{proof}

\begin{cor} \label{cor:open-subset of torus}
Let $X = \Spec(A)$ be a $G$-toric scheme as above and let $p: X \to X'$ be the 
quotient map. Let $Y \subsetneq X$ be a closed subscheme defined by a
$G$-invariant ideal $J$.
Let $h \in A^G$ be a non-unit such that $h \equiv 1$ (mod $ J$) and
set $V' = \Spec(A^G[h^{-1}])$.
Then we can find an open subscheme $U'$ of $X'$ such that
$X' = U' \cup V'$ and $p^{-1}(U') \cap Y = \emptyset$.
\end{cor}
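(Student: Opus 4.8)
The plan is to take for $U'$ the complement of the image $p(Y)$ in $X'$. The first step is to make sense of this: since $J$ is a $G$-invariant ideal, $Y = \Spec(A/J)$ is a $G$-invariant closed subscheme of $X$, so \propref{prop:Toric-4}(1) guarantees that $p(Y)$ is a closed subset of $X'$. Hence I would set $U' := X' \setminus p(Y)$, which is an open subscheme, and by construction $Y \subseteq p^{-1}(p(Y))$ while $p^{-1}(U') = X \setminus p^{-1}(p(Y))$, so the condition $p^{-1}(U') \cap Y = \emptyset$ is automatic. The only thing left to check is that $U'$ together with $V'$ covers $X'$.

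The key observation driving the argument is that the congruence $h \equiv 1 \pmod{J}$ forces $h$ to be nonvanishing along $Y$, and hence along $p(Y)$. Concretely, if $\mathfrak{q} \subseteq A$ is any prime containing $J$, then $h - 1 \in J \subseteq \mathfrak{q}$ gives $h \notin \mathfrak{q}$, and therefore $h \notin \mathfrak{q} \cap A^G = p(\mathfrak{q})$. Writing $V(h) = X' \setminus V'$ for the (closed) vanishing locus of $h$ on $X' = \Spec(A^G)$, this says precisely that $p(Y)$ is disjoint from $V(h)$, i.e. $p(Y) \subseteq V'$.

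Granting these two facts, the conclusion is formal: $X' \setminus U' = p(Y)$ and $X' \setminus V' = V(h)$ are disjoint closed sets, so $U' \cup V' = X' \setminus (p(Y) \cap V(h)) = X'$, as required. I do not expect a serious obstacle here; the single non-formal ingredient is the closedness of $p(Y)$, which is exactly what \propref{prop:Toric-4}(1) supplies, and which is genuinely needed, since without it one could only excise the closure $\overline{p(Y)}$, and $\overline{p(Y)}$ might meet $V(h)$ even though $p(Y)$ does not. Finally, the hypothesis that $h$ be a non-unit plays no role in the construction and only ensures that $V'$ is a proper basic open subset of $X'$.
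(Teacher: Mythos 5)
Your proof is correct and follows essentially the same route as the paper: both take $U' = X' \setminus p(Y)$, invoke Proposition~\ref{prop:Toric-4}(1) for the closedness of $p(Y)$, and use $h \equiv 1 \pmod J$ to place $Y$ (hence $p(Y)$) inside the locus where $h$ is invertible. Your observation that the non-unit hypothesis on $h$ is not actually used in the argument is also accurate.
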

\begin{proof}
Our assumption says that $V' \subsetneq X'$ is a proper open subset of $X'$
and $Y \subset V = p^{-1}(V')$ is a $G$-invariant closed subset.
Setting $Y' = p(Y)$, it follows from Proposition~\ref{prop:Toric-4} that
$Y' \subsetneq X'$ is a closed subset contained in $V'$.
In particular, $Y_1 = p^{-1}(Y')$ is a $G$-invariant
closed subscheme of $X$ such that $Y \subseteq Y_1 \subsetneq V \subsetneq X$.
The open subset $U' = X' \setminus Y'$ now satisfies our requirements.
\end{proof}

\section{Equivariant vector bundles on G-toric schemes} 
\label{section:equiv-vb-tor.sch}
In this section, we prove our main result about equivariant vector bundles
on affine $G$-toric schemes.  

\subsection{The set-up}\label{section:Set-up}
We shall prove \thmref{Thm: Main thm 1} under the following set-up.
Let $R$ be a commutative noetherian ring and let $S = \Spec(R)$.
Let $G = \Spec(R[P])$ be a diagonalizable group scheme over $R$.
Let $L$ be a lattice of finite rank and let $\sigma$ be a strongly convex, 
polyhedral, rational cone in $L_{\Q}$.
Let $\Delta$ denote the set of all faces of $\sigma$.

Let $A = R[L_{\sigma}]$ be such that $X = \Spec(A)$ is a $G$-toric scheme via a 
homomorphism $\psi: L \to P$ (see ~\eqref{eqn:G-toric-0}).
Set $Y = {\underset{\tau \in \Delta^1}\cup} X_{\tau}$.
Let $X' = \Spec(A^G)$ and $p: X \to X'$ denote the uniform categorical 
quotient in $\Sch_S$ defined by the inclusion $A^G \inj A$.

\subsection{Reduction to faithful action}\label{section:faithful}
We set $Q = \psi(L)$ and $H = \Spec(R[Q])$. Then $H$ is a diagonalizable
closed subgroup of $T_{\sigma}$ which acts faithfully on $X$ and $G$ acts on 
$X$ via the quotient $G \surj H$ (see \propref{prop:Diag-exact}).
The following lemma reduces the proof of the main theorem of this section
to the case of faithful action of $G$ on $X$.

We shall say that a finitely generated projective $A$-$G$-module $M$ over an 
$R$-$G$-algebra $A$ is {\sl trivial}, if it can be equivariantly extended
from $R$, that is, there is a finitely 
generated projective $R$-$G$-module $F$ such that $M \simeq F_A$.

\begin{lem} \label{lem:Reductn-to-torus-subgp.}
If every finitely generated projective $A$-$H$-module is trivial, then
so is every finitely generated projective $A$-$G$-module.
\end{lem}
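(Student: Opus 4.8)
The plan is to exploit the fact that, because the $G$-action on $X$ factors through $H$, all the weights of $A$ (as an $R$-$G$-algebra) lie in the character subgroup $Q = \psi(L) \subseteq P = X(G)$ of $H$. Indeed, $\phi_P(f_m) = e_{\psi(m)} \otimes f_m$ with $\psi(m) \in Q$, so $A$ is supported in degrees belonging to $Q$. By Proposition~\ref{prop:graded structure} an $A$-$G$-module is a $P$-graded $A$-module and an $A$-$H$-module is a $Q$-graded $A$-module; since $A$ lives in degrees from $Q$, any $P$-graded $A$-module splits along the cosets of $Q$ in $P$. First I would use this to reduce the problem, coset by coset, to modules whose weights lie in $Q$ — which are precisely $A$-$H$-modules — and then, after a weight shift, invoke the hypothesis and shift back.

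Concretely, let $M$ be a finitely generated projective $A$-$G$-module and write $M = \bigoplus_{a \in P} M_a$ as in Proposition~\ref{prop:graded structure}. Grouping weights into cosets, set $M_c = \bigoplus_{a \in c} M_a$ for $c \in P/Q$. Since $A$ is $Q$-graded, $A_q \cdot M_a \subseteq M_{a+q}$ stays in a fixed coset, so each $M_c$ is an $A$-$G$-submodule and $M = \bigoplus_{c \in P/Q} M_c$ as $A$-$G$-modules. Each $M_c$ is an $A$-module direct summand of $M$, hence finitely generated projective over $A$, and only finitely many $M_c$ are nonzero: by (the proof of) Lemma~\ref{lem:Res-Prp-D}, $M$ has finitely many homogeneous generators, whose weights meet only finitely many cosets. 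Thus it suffices to show each $M_c$ is trivial.

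Fix a coset $c$, choose $a_c \in c$, and twist by the rank one free module $(R_{-a_c})_A$ (using the notation $R_v$ introduced before Lemma~\ref{lem:Hom-rk-1}), forming $N_c := M_c \otimes_A (R_{-a_c})_A$. Tensoring with a rank one free $A$-$G$-module shifts all weights by $-a_c$ and preserves finite generation and projectivity, so $N_c$ is a finitely generated projective $A$-$G$-module supported in $c - a_c = Q$. A $P$-graded $A$-module supported on $Q$ is the same datum as a $Q$-graded one, so $N_c$ is a finitely generated projective $A$-$H$-module. By hypothesis $N_c \simeq (F_c)_A$ for a finitely generated projective $R$-$H$-module $F_c$; as both sides have weights in $Q$, this is simultaneously an isomorphism of $A$-$G$-modules, where $F_c$ is viewed as an $R$-$G$-module supported on $Q \subseteq P$. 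Twisting back by $(R_{a_c})_A$, and using $(R_{-a_c})_A \otimes_A (R_{a_c})_A \simeq A$, gives $M_c \simeq (F_c \otimes R_{a_c})_A$ as $A$-$G$-modules, with $F_c \otimes R_{a_c}$ a finitely generated projective $R$-$G$-module. Summing over the finitely many nonzero cosets yields $M \simeq \big(\bigoplus_c F_c \otimes R_{a_c}\big)_A$, so $M$ is trivial.

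The routine verifications are the weight bookkeeping: that $-\otimes_A (R_v)_A$ shifts weights by $v$, preserves finitely generated projective modules, and satisfies $(R_v)_A \otimes_A (R_w)_A \simeq (R_{v+w})_A$. The one genuinely delicate point — the step I would be most careful about — is the passage between the $H$- and $G$-equivariant structures: I must check that an isomorphism of $A$-$H$-modules between modules whose $P$-weights happen to lie in $Q$ is automatically $A$-$G$-linear, which is exactly what lets the hypothesis on $H$-modules feed back into a statement about $G$-modules. This hinges entirely on $A$ being supported in degrees from $Q$, i.e.\ on the action factoring through $H$.
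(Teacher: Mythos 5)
Your proof is correct and follows essentially the same route as the paper's: decompose $M$ along the cosets of $Q = \psi(L)$ in $P$ (which is the content of Lemma~\ref{lem:SES-ab-gp} applied to $0 \to Q \to P \to P/Q \to 0$), twist each coset piece by a rank one module $R_{-a}$ to land in the $Q$-graded world of $A$-$H$-modules, apply the hypothesis, observe the resulting isomorphism is automatically $A$-$G$-linear, and twist back. Your write-up is more explicit than the paper's about the weight bookkeeping and about why only finitely many cosets contribute, but the argument is the same.
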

\begin{proof}
Given any $E \in \mbox{$(A$-$G)$-proj}$,
we can write $E = \bigoplus\limits_{b \in P/Q} E_b$ with 
$E_b = \bigoplus\limits_{\{a|b = a \ \mbox{mod} \  Q\}} E_a$.
Lemma~\ref{lem:SES-ab-gp} says that each $E_b \in \mbox{$(A$-$G)$-proj}$.
It suffices to show that each $E_b$ is trivial.

Now, $E_{b}$ is trivial if and only if $E_{b} {\underset{R} \otimes } R_{-a}$ 
is trivial for any $a$ with $b = a \ (\mbox{mod}) \  Q$. But 
$E_{b} {\underset{R} \otimes} R_{-a}$ is a projective $A$-$H$-module
and so we can find an $A$-$H$-module isomorphism
$\phi: E_{b} {\underset{R} \otimes} R_{-a} \xrightarrow{\simeq} F_A$ 
for some $F \in \mbox{$(R$-$H)$-proj}$. This is then an $A$-$G$-module 
isomorphism as well.
\end{proof}

\subsection{Trivialization in a neighborhood of $Y$}\label{section:cod-1}
Note that if $X = \Spec(A)$ is an affine G-toric scheme 
and $\tau$ is any face of the cone $\sigma$, 
then $X_{\tau}$ is a $G$-invariant closed subscheme of $X$. 
Moreover the map $\pi_{\tau} : R[L_{\tau}]  \to A = R[L_{\sigma}]$ 
defined before is $G$-equivariant (because it is $T_{\sigma}$-equivariant).

\begin{lem} \label{lem:codim-1-face}
Let $\tau_1, \cdots ,\tau_k$ denote the codimension 1 faces of $\sigma$ 
and let $I_j$ denote the ideal of $A$ defining the closed subscheme $X_{\tau_j}$ 
associated to the face $\tau_j$. 
Let $E$ be an $A$-$G$-module and $F$ be an $R$-$G$-module such that 
$E/I_j \simeq F_{A/I_j}$ for all $1 \leq j \leq k$. 
Then $E/J \simeq F_{A/J}$, where $J$ denotes the ideal defining 
$Y = \stackrel{k}{\underset{i =1}\cup} X_{\tau_i}$. 
\end{lem}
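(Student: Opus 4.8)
The plan is to argue by induction on the number $k$ of codimension one faces, reducing at each stage to a two-term Mayer--Vietoris gluing. Write $J_{k-1} = \bigcap_{j<k} I_j$, so that $J = J_{k-1}\cap I_k$, and observe that $J_{k-1}+I_k$ is the monomial ideal defining the overlap $\bigl(\bigcup_{j<k}X_{\tau_j}\bigr)\cap X_{\tau_k}$, which is exactly the union of certain facets of the cone $\tau_k$. Since all the quotients $A/I$ in sight are free $R$-modules on monomial bases, the sequence $0 \to A/J \to A/J_{k-1}\oplus A/I_k \to A/(J_{k-1}+I_k)\to 0$ is split exact as a sequence of $R$-$G$-modules. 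First I would apply $\Hom_A(E,F_A)$ for the finitely generated projective $E$ (which commutes with these base changes) and then take $G$-invariants, exact for diagonalizable $G$ by \corref{cor:SES-mod}(2) together with \lemref{lem:AG-hom}(2). This yields an exact Mayer--Vietoris sequence identifying $\Hom_{(A/J)G}(E/J,F_{A/J})$ with the fibre product of $\Hom_{(A/J_{k-1})G}(E/J_{k-1},F_{A/J_{k-1}})$ and $\Hom_{(A/I_k)G}(E/I_k,F_{A/I_k})$ over the $G$-homomorphisms on the overlap. Hence a $G$-equivariant isomorphism over $Y$ is the same as a pair of isomorphisms over $\bigcup_{j<k}X_{\tau_j}$ and over $X_{\tau_k}$ that agree after restriction to the overlap; if both are isomorphisms, the glued map is one as well.

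By induction there is an isomorphism $\psi\colon E/J_{k-1}\xrightarrow{\sim} F_{A/J_{k-1}}$, and by hypothesis an isomorphism $\phi_k\colon E/I_k \xrightarrow{\sim} F_{A/I_k}$. Their restrictions to the overlap differ by a $G$-equivariant automorphism $u$ of $F_{A/(J_{k-1}+I_k)}$. The entire problem then becomes: modify $\phi_k$ by a $G$-automorphism of $F_{A/I_k}$ whose restriction to the overlap is $u$. Granting this, the adjusted maps agree on the overlap and glue to the required $E/J\simeq F_{A/J}$.

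The heart of the argument, and where the toric structure is indispensable, is the following automorphism lifting statement: for any subset $S$ of the facets $\rho_1,\dots,\rho_s$ of a cone $\tau$, every $G$-automorphism of $F_{R[(\bigcup_{i\in S}\rho_i)\cap L]}$ extends to a $G$-automorphism of $F_{R[\tau\cap L]}$. Here I would use the retractions $i_\rho\colon R[L_\tau]\surj R[L_\rho]$ and their sections $\pi_\rho\colon R[L_\rho]\to R[L_\tau]$ of \eqref{eqn:Toric-0}, which satisfy $i_\rho\circ\pi_\rho=\id$ and, for two faces, the property that $i_{\rho'}\circ\pi_\rho$ factors through the restriction to $\rho\cap\rho'$. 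The key computation is that if a $G$-automorphism $c$ of $F_{R[\rho\cap L]}$ restricts to the identity on the subface $\rho\cap\rho'$, then $\pi_\rho^*(c)$ restricts to the identity on all of $\rho'$. Using the Mayer--Vietoris identification again, the given automorphism over the union corresponds to a compatible family $\{u_i\}$ over the individual facets; I then build the extension one facet at a time, setting $\hat u^{(0)}=\id$ and $\hat u^{(m)} = \pi_{\rho_m}^*(c_m)\cdot \hat u^{(m-1)}$ with $c_m = u_m\cdot(i_{\rho_m}^*\hat u^{(m-1)})^{-1}$. Compatibility of the family forces $c_m$ to be the identity on each previously treated overlap $\rho_m\cap\rho_i$, so by the key computation the correction $\pi_{\rho_m}^*(c_m)$ leaves the already-matched facets untouched while arranging the correct restriction on $\rho_m$.

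Applying this lifting to $\tau=\tau_k$ and $S=\{\tau_j\cap\tau_k : j<k\}$ produces the needed modification of $\phi_k$, completing the induction (with base case $k=1$, where $J=I_1$). I expect the main obstacle to be precisely this automorphism lifting: a priori the isomorphisms on different faces carry an unconstrained discrepancy cocycle, and it is only the existence of the toric sections $\pi_\rho$, together with the exactness of $G$-invariants, that allows this cocycle to be trivialized; without the sections the relevant nonabelian $H^1$ over the boundary of the cone need not vanish. A secondary point to verify carefully is that $E$ may be taken finitely generated projective, so that $\Hom_A(E,F_A)$ indeed commutes with the base changes $A\to A/I$ invoked above.
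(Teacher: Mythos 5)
Your proof is correct and shares with the paper both the outer induction on the number of faces and the Mayer--Vietoris gluing sequence $0\to E/J\to E/J_{k-1}\oplus E/I_k\to E/(J_{k-1}+I_k)\to 0$, but it trivializes the discrepancy cocycle in the opposite direction and by a genuinely different mechanism. The paper corrects the isomorphism over the union $Y_{k-1}$: the single retraction $\Pi_{\tau_k}\colon X_\sigma\to X_{\tau_k}$ carries $Y_{k-1}$ into the overlap $Y_{k-1}\cap X_{\tau_k}$ (this is exactly the commutativity of ~\eqref{eqn:Toric-0}) and restricts to the identity on that overlap, so pulling the discrepancy automorphism back along $\Pi_{\tau_k}|_{Y_{k-1}}$ extends it to all of $Y_{k-1}$ in one step. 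You instead correct $\phi_k$ over the new face, which forces you to extend an automorphism from the union of faces $\bigcup_{j<k}(\tau_j\cap\tau_k)$ of $\tau_k$ to all of $\tau_k$; since there is no single retraction of $\tau_k$ onto such a union, you need the inner face-by-face induction with the corrections $\pi_{\rho_m}^*(c_m)$. That inner induction is sound --- your ``key computation'' is again precisely ~\eqref{eqn:Toric-0} --- but it costs you two verifications the paper's one-step extension avoids: first, the overlap is a union of \emph{faces} of $\tau_k$ of possibly higher codimension, not facets as you wrote (opposite facets of the cone over a square meet only at the origin), although your argument never uses the facet hypothesis; second, identifying an automorphism over a union of more than two of these closed subschemes with a compatible family over the pieces is not automatic for closed covers and must be checked degree by degree using that all the ideals involved are monomial. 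Your closing caveat is shared by the paper's own proof: exactness of the gluing sequence for $E$, i.e.\ $J_{k-1}E\cap I_kE=(J_{k-1}\cap I_k)E$, uses that $E$ is projective, which holds in the lemma's only application.
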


\begin{proof}
Let $J_r$ be the ideal defining the $G$-invariant closed subscheme 
$Y_r = \bigcup\limits_{i=1}^r X_{\tau_i}$ for $1 \leq r \leq k$. 
We prove by induction  on $r$ that $E/J_r \simeq F_{A/J_r}$.
Assume that $\phi:E/J_r \simeq F_{A/{J_r}}$ and 
$\eta:E/I_{r+1} \simeq F_{A/I_{r+1}}$ are given isomorphisms. 
This gives us a $G$-equivariant automorphism $\eta \circ \phi^{-1}$ of 
$F_{A/({J_r}+I_{r+1})}$. 
Under the $G$-equivariant retraction 
$\Pi_{r+1}: X_{\sigma} \to X_{\tau_{r+1}} = \Spec(A/I_{r+1})$ 
(where $\Pi_i = \Spec(\pi_i)$), we have
$\Pi_{r+1}(Y_r) \subset Y_r \cap X_{\tau_{r+1}}$ (see ~\eqref{eqn:Toric-0}).

Therefore $\phi' = (\Pi_{r+1}|_{Y_r})^*(\eta \circ \phi^{-1})$ defines an 
$A/(J_r)$-$G$-linear automorphism of $F_{A/{J_r}}$. 
Replacing $\phi$ by the isomorphism $\phi' \circ \phi$, 
we can arrange that $\phi$ and $\eta$ agree modulo ($J_r + I_{r+1}$). 
So they define a unique isomorphism $E/J_{r+1} \to F_{A/J_{r+1}}$.
To see this, use the exact sequence \\
\hspace*{3cm}$
0 \to E/J_{r+1} \to E/J_r \times E/I_{r+1} \to E/(J_r + I_{r+1}) \to 0
$.
\end{proof}

\begin{lem} \label{lem:matrix}
Let $P \in M_m(A^G)$ be a rank $m$ matrix with entries in $A^G$ 
such that $P$ is invertible mod $I_j$ for all $1 \leq j \leq k$, 
where $I_j$ and $J$ are as in Lemma~\ref{lem:codim-1-face}.  
Then for any positive integer $N$, there is $\tilde{P}_N \in GL_m(A^G)$ such 
that $(P\tilde{P}_N)_{ij} \in J^N$ for all $i \neq j$.
\end{lem}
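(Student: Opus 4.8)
The plan is to invert $P$ modulo each codimension-one face, glue these local inverses into a single invertible matrix that trivializes $P$ modulo $J$, and then bootstrap the resulting congruence from $J$ up to $J^N$ by elementary column operations. Throughout I use that $J=\bigcap_{j=1}^{k}I_j$ by \lemref{lem:I(Y) = J}, and that reduction $i_{\tau_j}\colon A\to A/I_j$ and the retraction $\pi_{\tau_j}\colon A/I_j\to A$ (with $i_{\tau_j}\circ\pi_{\tau_j}=\operatorname{id}$) are $G$-equivariant ring maps.

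First I would construct local inverses. Since the entries of $P$ lie in $A^G$, the reduction $\bar P_j:=P\bmod I_j$ lies in $M_m((A/I_j)^G)$ and is invertible, so $\det\bar P_j$ is a $G$-invariant unit of $A/I_j=R[L_{\tau_j}]$; as the inverse of an invariant unit is again invariant, $\bar P_j^{-1}=(\det\bar P_j)^{-1}\operatorname{adj}(\bar P_j)$ again has entries in $(A/I_j)^G$. Applying the $G$-equivariant section $\pi_{\tau_j}$ entrywise yields $Q_j\in M_m(A^G)$; because $\pi_{\tau_j}$ is a ring homomorphism, $Q_j$ is invertible with inverse $\pi_{\tau_j}(\bar P_j)$, and $PQ_j\equiv I\pmod{I_j}$.

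Next I would patch these, following the induction of \lemref{lem:codim-1-face}. Starting from $\tilde P^{(1)}=Q_1$, suppose $P\tilde P^{(r)}\equiv I$ modulo $\bigcap_{j\le r}I_j$ with $\tilde P^{(r)}\in GL_m(A^G)$. I correct by $R_{r+1}:=\pi_{\tau_{r+1}}\big((P\tilde P^{(r)}\bmod I_{r+1})^{-1}\big)\in GL_m(A^G)$, so that $P\tilde P^{(r)}R_{r+1}\equiv I\pmod{I_{r+1}}$. The compatibility square ~\eqref{eqn:Toric-0} for the intersection face $\eta=\tau_j\cap\tau_{r+1}$ shows that $i_{\tau_j}\circ\pi_{\tau_{r+1}}$ factors through $R[L_\eta]$, where $P\tilde P^{(r)}$ is already $\equiv I$ (since $I_\eta\supseteq I_j$); hence $R_{r+1}\equiv I\pmod{I_j}$ for every $j\le r$ and the earlier congruences survive. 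After $k$ steps I obtain $\tilde P=Q_1R_2\cdots R_k\in GL_m(A^G)$, a product of invertible matrices, with $P\tilde P\equiv I\pmod J$. Writing $B=P\tilde P$, its diagonal entries are then units modulo $J$ and its off-diagonal entries lie in $J$. To raise the off-diagonal from $J^s$ to $J^{s+1}$ I replace column $l$ by (column $l$) $-\,\mu\cdot$(column $i$), choosing $\mu\in J^{s}\cap A^G$ with $B_{ii}\mu\equiv B_{il}\pmod{J^{s+1}}$, which is solvable because $B_{ii}$ is a unit modulo the ideal $J$, nilpotent in $A^G/(J^{s+1}\cap A^G)$. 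The estimate $J^{s}\cdot J^{s}\subseteq J^{s+1}$ ensures these operations neither spoil the remaining off-diagonal entries below $J^{s+1}$ nor disturb the diagonal units modulo $J$. Taking $\tilde P_N=\tilde P\cdot C$, with $C\in GL_m(A^G)$ the product of the elementary matrices used, gives $(P\tilde P_N)_{ij}\in J^N$ for $i\neq j$.

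The hard part is ensuring that $\tilde P_N$ is genuinely invertible over $A^G$, not merely invertible modulo $J^N$. The naive candidates — the adjugate $\operatorname{adj}(P)$ (for which $P\operatorname{adj}(P)=\det(P)\,I$ is already scalar) or the geometric series built from $I-P\tilde P$ — produce the correct congruence but fail to be invertible, precisely because $\det P$ is a unit only modulo $J$ and not in $A^G$. The approach above circumvents this by assembling $\tilde P_N$ entirely out of matrices that are invertible by construction, namely the retraction lifts $Q_j$ and elementary matrices, and by exploiting $J^{s}\cdot J^{s}\subseteq J^{s+1}$ to descend through the powers of $J$ while keeping the diagonal a unit modulo $J$. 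An alternative route works over the torus $X_\sigma\setminus Y$, where one inverts $P$ and then clears the resulting monomial denominators $e_{-m}$ using \lemref{lem:large N}.
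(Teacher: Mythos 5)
Your proposal is correct and follows essentially the same two-step route as the paper: first use the $G$-equivariant retractions $\pi_{\tau_j}$ together with the compatibility squares \eqref{eqn:Toric-0} and $J=\bigcap_j I_j$ to produce $\tilde P_1\in GL_m(A^G)$ with $P\tilde P_1\equiv \mathrm{Id}\pmod J$, then lift the congruence to $J^N$ by elementary column operations, using that off-diagonal products land in $J^{2s}\subseteq J^{s+1}$. The only cosmetic differences are that you retract the inverse rather than invert the retraction (the same matrix), and you solve $B_{ii}\mu\equiv B_{il}$ for the column-operation coefficient where the paper simply subtracts $B_{ji}C_j$ directly.
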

\begin{proof}
For $1 \le i \le k$, we consider the commutative diagram
of retractions:
\begin{equation}\label{eqn:matrix-00}
\xymatrix@C.8pc{
({A}/{I_i})^G \ar[r] \ar[d]_{\pi^G_{\tau_i}} & {A}/{I_i} \ar[d]^{\pi_{\tau_i}} \\
A^G \ar[r] & A.}
\end{equation}

Since $(P \ \mbox{mod} \ I_1)$ is invertible, 
$P_1 := \pi_{\tau_1} (P \ \mbox{mod} \ I_1) \in GL_m(A^G)$
and hence $PP_1^{-1} \equiv {\rm Id}_m \ (\mbox{mod} \ I_1)$.
We now let $P_2$ denote the image of $(PP_1^{-1} \mbox{mod} \ I_2)$
under the $G$-equivariant retraction $\pi_{\tau_2}$. 
This yields $P_2 \equiv {\rm Id}_m \ (\mbox{mod} \ I_1)$ 
(see ~\eqref{eqn:Toric-0})
and so $PP_1^{-1}P_2^{-1} \equiv {\rm Id}_m \ (\mbox{mod} \ I_1 \cap I_2)$.
Repeating this procedure and using Lemma~\ref{lem:I(Y) = J}, 
we can find $\tilde{P}_1 \in GL_m(A^G)$ such that 
$P \tilde{P}_1 \equiv {\rm Id}_m \ (\mbox{mod} \ J)$, which proves the lemma 
for $N = 1$. 

Assume now that there exists $\tilde{P}_N \in GL_m(A^G)$ such that 
$(P\tilde{P}_N)_{ij} \equiv 0$ (mod $J^N$) for $i \neq j$
and $(P\tilde{P}_N)_{ii} \equiv 1$ (mod $J$).
By elementary column operations 
$C_i \mapsto C_i - (P\tilde{P}_N)_{ji}C_j$ for $ i > j = 1, \cdots m-1$ 
and $C_i \mapsto C_i - (P\tilde{P}_N)_{ji}C_j$ for $ i < j = 2, \cdots m$
on $P\tilde{P}_N$, we get a matrix whose off-diagonal elements are 
$\equiv 0$ (mod $J^{N+1}$) and diagonal elements are $\equiv 1$ (mod $J$).
These operations correspond to right multiplication by some $P' \in GL_m(A^G)$. 
Taking $\tilde{P}_{N+1} = \tilde{P}_NP'$ completes the induction step.
\end{proof}

\begin{lem}\label{lem:extn-open-set}
Assume that $R$ satisfies $(\dagger)$ and let $I$ be a $G$-invariant ideal of 
$A$. Let $F$ and $E$ be finitely generated free $R$-$G$ and 
$A$-$G$-modules, respectively. 
Given any $(A/I)$-$G$-module isomorphism
$\phi : E/I \xrightarrow{\simeq} F_{A/I}$, there exists $h \in A^G$
such that  $ h \equiv 1$ modulo $I$
and $\phi$ extends to an $A_h$-$G$-module isomorphism 
$\phi_h : E_h \xrightarrow{\simeq} F_{A_h}$.
\end{lem}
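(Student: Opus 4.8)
The plan is to lift $\phi$ to an honest $A$-$G$-module homomorphism $E \to F_A$ and then invert a single $G$-invariant element to make it an isomorphism. First, since $R$ satisfies $(\dagger)$, Lemma~\ref{lem:rank-1} lets me write the free $R$-$G$-module $F$ as a direct sum of rank-one pieces $R_{a_i}$ of constant weight, and likewise $E = \bigoplus_j A_{b_j}$ as a direct sum of rank-one free $A$-$G$-modules $A_{b_j} = R_{b_j}\otimes_R A$. As $\phi$ is an isomorphism over $A/I$ (which is nonzero, $I$ being proper), $E$ and $F_A$ have the same $A$-rank, say $m$. Because $I$ is $G$-invariant, the quotient $F_A \surj F_A/IF_A = F_{A/I}$ is a surjection in $(A$-$G)$-Mod; since $E$ is free, hence $A$-$G$-projective by \lemref{lem:Splitting lemma}, the composite $E \to E/I \xrightarrow{\phi} F_{A/I}$ lifts to some $\tilde\phi \in \Hom_{AG}(E, F_A)$ reducing to $\phi$ modulo $I$. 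Applying the same reasoning to $\phi^{-1}$ (using projectivity of $F_A$ and the surjection $E \surj E/I$) produces $\tilde\psi \in \Hom_{AG}(F_A, E)$ reducing to $\phi^{-1}$ modulo $I$.

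Next I form $u = \tilde\psi\circ\tilde\phi \in \End_{AG}(E)$, which by construction reduces to $\phi^{-1}\circ\phi = \mathrm{Id}_{E/I}$ modulo $I$; thus $u - \mathrm{Id}_E$ carries $E$ into $IE$. Set $h := \det(u) \in A$, the determinant computed in the weight basis of $E$. The key point is that $h$ lies in $A^G$: by \lemref{lem:Hom-rk-1} each matrix entry $u_{jk}\colon A_{b_k}\to A_{b_j}$ of an $A$-$G$-endomorphism is multiplication by a homogeneous element of weight $b_k - b_j$, so every term $\prod_j u_{j,\pi(j)}$ in the Leibniz expansion of $\det(u)$ has total weight $\sum_j(b_{\pi(j)} - b_j) = 0$; equivalently, $u$ acts on the rank-one $A$-$G$-module $\Lambda^m_A E$ by the scalar $\det(u)$, and $\End_{AG}(\Lambda^m_A E) = A^G$ by \lemref{lem:Hom-rk-1}. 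Moreover, since the matrix of $u$ equals $\mathrm{Id}_m$ plus a matrix with entries in $I$, we get $h = \det(u) \equiv 1 \pmod I$.

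Finally I localize. Over $A_h$ the element $h = \det(u)$ is a unit, so $u$ is invertible; writing $h = \det(\tilde\psi)\det(\tilde\phi)$ in terms of the matrices of $\tilde\phi$ and $\tilde\psi$ with respect to the weight bases of $E$ and $F_A$, it follows that $\det(\tilde\phi)$ is a unit in $A_h$, whence $\phi_h := \tilde\phi\otimes_A A_h\colon E_h \to F_{A_h}$ is an $A_h$-$G$-module isomorphism (its inverse is automatically equivariant). Since $h \equiv 1 \pmod I$, the element $h$ is a unit modulo $I$ and $A_h/IA_h \cong A/I$, so $\phi_h$ reduces to $\phi$ modulo $I$, as required. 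The only genuinely nonroutine step is the invariance $h \in A^G$, which is exactly what forces the two-sided composite $u = \tilde\psi\tilde\phi$ (making the relevant weight $0$) rather than $\tilde\phi$ alone, whose determinant would in general be a homogeneous element of nonzero weight and hence not $G$-invariant.
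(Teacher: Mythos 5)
Your proof is correct and follows essentially the same route as the paper: lift $\phi$ and $\phi^{-1}$ to equivariant maps $\tilde\phi$, $\tilde\psi$ using $A$-$G$-projectivity (Lemma~\ref{lem:Splitting lemma}), decompose into constant-weight rank-one pieces via Lemma~\ref{lem:rank-1}, and take $h$ to be the determinant of the two-sided composite, which is $G$-invariant by the weight count in the Leibniz formula (Lemma~\ref{lem:Hom-rk-1}). The only differences are cosmetic — you compose as an endomorphism of $E$ rather than of $F_A$, and you spell out the congruence $h\equiv 1\pmod I$ and the invertibility of $\det(\tilde\phi)$ in $A_h$, which the paper leaves implicit.
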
 
\begin{proof}
Let $\phi'$ denote the inverse of $\phi$. 
Since $E, F_A$ are projective $A$-$G$-modules, 
$\phi, \phi'$ extend to $A$-$G$-module homomorphisms 
$T: E \to F_A$ and $T': F_A \to E$
by Lemma~\ref{lem:Splitting lemma}.
As $R$ satisfies $(\dagger)$, $F$ is a direct sum of rank 1 free 
$R$-$G$-modules by Lemma~\ref{lem:rank-1}. 
Since $E$ and $F_A$ are isomorphic modulo $I$, they have same rank, say, $m$. 
Fix an $R$-basis $\{v_1, \cdots, v_m\}$ of $F$ 
consisting of elements of constant weights $e_{w_1}, \cdots, e_{w_m}$ 
($w_i \in P$) and fix any $A$-basis of $E$. 

With respect to the chosen bases, $T, T'$ define matrices 
in $M_m(A)$ which are invertible modulo $I$.
Moreover as $TT' = (a_{ij})$ defines an $A$-$G$-module endomorphism of $F_A$,
it can be easily checked using Lemma \ref{lem:Hom-rk-1} 
that $a_{ij} \in A_{w_i - w_j}$ and 
using the Leibniz formula for determinant, one checks that 
$det(TT') \in A^G$. We take $h = det(TT')$ to finish the proof.
\end{proof}

\subsection{Descent to the quotient scheme}\label{section:DQ}
The following unique `descent to the quotient' property of the $G$-equivariant 
maps will be crucial for proving our main results on equivariant vector bundles.

\begin{lem}\label{lem:descent-of-auto}
Assume that $R$ satisfies $(\dagger)$.
Let $q: W \to W'$ be a uniform categorical quotient in $\Sch_S$
for a $G$-action on $W$, where $w: W \to S$ and $w': W' \to S$ are
structure maps. Assume that $q$ is an affine morphism.
Let $F$ be a finitely generated projective $R$-module. 
Given any $G$-equivariant endomorphism $f$ of $w^*(F)$,
there exists a unique endomorphism $\tilde{f}$ of $w'^*(F)$
such that $f = q^*(\tilde{f})$.
In particular, $\wt{f}$ is an automorphism if $f$ is so.
\end{lem}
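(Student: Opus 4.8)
The plan is to reduce the whole statement to a single descent assertion about invariant global functions, exploiting that $(\dagger)$ makes $F$ free. Since $R$ satisfies $(\dagger)$, every finitely generated projective $R$-module is free, so I fix an isomorphism $F \simeq R^m$ and thereby identify $w^*(F) \simeq \sO_W^{\oplus m}$ and $w'^*(F) \simeq \sO_{W'}^{\oplus m}$, with basis sections pulled back from $S$ and hence of trivial ($G$-invariant) weight. Under these identifications an $\sO_{W'}$-linear endomorphism of $w'^*(F)$ is a matrix in $M_m\big(\Gamma(W',\sO_{W'})\big)$, while an $\sO_W$-linear endomorphism of $w^*(F)$ is a matrix in $M_m\big(\Gamma(W,\sO_W)\big)$. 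A routine check against the canonical equivariant structure of a sheaf pulled back from the base (the incarnation over an affine open being \lemref{lem:Hom-rk-1} with all weights zero, so that $\Hom_{AG}(A,A)=A^G$) shows that such a matrix is $G$-equivariant precisely when every entry lies in the subring $\Gamma(W,\sO_W)^G$ of $G$-invariant global functions. Thus $f$ corresponds to a matrix $M=(m_{ij})$ with $m_{ij}\in\Gamma(W,\sO_W)^G$.

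The key point is then to descend the scalars. A $G$-invariant global function on $W$ is the same datum as a $G$-invariant morphism $W\to\A^1_S$, and since $\A^1_S\in\Sch_S$, the universal property of the categorical quotient $q\colon W\to W'$ (in the sense of \cite[Definition~0.5]{GIT}) produces a unique morphism $W'\to\A^1_S$ through which it factors. Equivalently, the ring homomorphism $q^{\#}\colon \Gamma(W',\sO_{W'})\to\Gamma(W,\sO_W)$ is injective with image exactly $\Gamma(W,\sO_W)^G$; that is, it is a ring isomorphism onto the invariants. If one prefers a local argument, this is where the hypotheses that $q$ is affine and uniform enter: over an affine open $U'=\Spec(B)\subseteq W'$ the base change $q^{-1}(U')=\Spec(A)\to U'$ is again a categorical quotient, so $B\simeq A^G$ by uniqueness of categorical quotients together with linear reductivity of the diagonalizable $G$, and the global statement is recovered by gluing.

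I would then conclude as follows. Applying $M_m(-)$ to $q^{\#}$ yields a ring isomorphism $q^*\colon \End_{\sO_{W'}}(w'^*(F))\xrightarrow{\simeq}\End_{\sO_W,G}(w^*(F))$ that is compatible with composition. Existence and uniqueness of $\tilde f$ are immediate: $\tilde f$ is the unique matrix over $\Gamma(W',\sO_{W'})$ whose entries map under $q^{\#}$ to those of $M$, and injectivity of $q^{\#}$ gives the uniqueness clause, since $q^*(\tilde f)=q^*(\tilde f')=f$ forces $\tilde f=\tilde f'$. For the last assertion, if $f$ is an automorphism then its $\sO_W$-linear inverse is automatically $G$-equivariant (the inverse of an equivariant isomorphism is equivariant), so $f$ is a unit in $\End_{\sO_W,G}(w^*(F))$; a ring isomorphism carries units to units, whence $\tilde f$ is a unit in $\End_{\sO_{W'}}(w'^*(F))$, i.e.\ an automorphism.

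The step I expect to require the most care is pinning down the ring of $G$-equivariant endomorphisms of $w^*(F)$ as matrices over $\Gamma(W,\sO_W)^G$ and then descending these invariant functions through $q$; once the two endomorphism rings are correctly identified, the universal property of the categorical quotient does all the work, and the affineness and uniformity of $q$ are genuinely needed only if one runs the descent by localizing over $W'$ rather than invoking the universal property globally.
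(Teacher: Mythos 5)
Your proof is correct, but it takes a genuinely different route from the paper's. The paper argues by induction on the number of affine opens needed to cover the noetherian scheme $W'$: in the base case $W'$, and hence $W$ (by affineness of $q$), is affine, one writes $W=\Spec(B)$, $W'=\Spec(B^G)$, and \lemref{lem:Hom-rk-1} (with weight $e_0$) identifies the equivariant endomorphism $f$ with a matrix over $B^G$; the induction step descends $f$ over each piece of an affine cover and glues, and it is precisely here that the \emph{uniformity} of the quotient is invoked, to know that $q$ restricted over an open subset of $W'$ is again a categorical quotient. You instead apply the universal property of the categorical quotient once, globally, to the target $\A^1_S\in\Sch_S$: this identifies $q^{\#}\colon\Gamma(W',\sO_{W'})\to\Gamma(W,\sO_W)^G$ as a ring isomorphism in one stroke, and since the equivariant structure on $w^*(F)\simeq\sO_W^{\oplus m}$ is the canonical one pulled back from $S$, equivariance of an endomorphism is literally entrywise invariance of its matrix (the condition $\mu_W^*(f)\circ\theta=\theta\circ p^*(f)$ reads $\mu_W^{\#}(m_{ij})=p^{\#}(m_{ij})$), so existence, uniqueness and preservation of automorphisms all follow from $M_m(q^{\#})$ being a ring isomorphism. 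Your approach buys economy and, as you note, slightly more generality — the affineness and uniformity of $q$ are not needed for the global argument, since your formal computation of the equivariance condition makes the cover by $G$-invariant affine opens unnecessary; the paper's approach stays entirely inside the explicit affine/comodule formalism of Sections 2--3 at the cost of the induction. Your treatment of the final assertion (the inverse of an equivariant isomorphism is equivariant, and ring isomorphisms preserve units) is a harmless variant of the paper's appeal to uniqueness.
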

\begin{proof}
The second part follows from the uniqueness assertion in the first part,
so we only have to prove the existence of a unique $\wt{f}$. 
Since $W'$ is noetherian, we can write 
$W' = \stackrel{r}{\underset{i =1}\cup} U'_{i}$,
where each $U'_i$ is affine open. 
We prove the lemma by induction on $r$. If $r = 1$, then $W'$ is affine 
and hence so is $W$. We can write $W = \Spec(B)$ and $W' = \Spec(B^G)$ for 
some finite type $R$-$G$-algebra $B$ (see Proposition~\ref{prop:Toric-4}).
As $F$ is a free $R$-$G$-module of constant weight $e_0$, it follows from  
Lemma~\ref{lem:Hom-rk-1} that  $f \in M_n(B^G)$ with $n = {\rm rank}(F)$.
In particular, it defines a unique endomorphism $\tilde{f}$ of $w'^*(F)$ such 
that $f = q^*(\tilde{f})$.

We now assume $r \ge 2$ and set $U' = \stackrel{r}{\underset{i =2}\cup} U'_{i}$.
Then $q: U_1 := q^{-1}(U'_1) \to U'_1$ and $q: U := q^{-1}(U') \to U'$ 
are uniform categorical quotients. 
As $U'_1$ is affine, there exists a unique
$\wt{f}_{U'_1}: F_{U'_1} \to F_{U'_1}$ 
such that $q^*(\wt{f}_{U'_1}) = f|_{U_1}$.
By induction hypothesis, there exists a unique
$\wt{f}_{U'}: F_{U'} \to F_{U'}$ 
such that $q^*(\wt{f}_{U'}) = f|_{U}$.
As $V':= U'_1 \cap U'$ has a cover by $r-1$ affine opens, the induction 
hypothesis and uniqueness imply that $\wt{f}_{U'_1}|_{V} = \wt{f}_{U'}|_{V}$.
The reader can check that $\wt{f}_{U_1'}$ and $\wt{f}_{U'}$ glue together
to define the desired unique endomorphism $\wt{f}: w'^*(F) \to w'^*(F)$. 
\end{proof}

\subsection{The main theorem}\label{section:MT}
We now use the above reduction steps to prove our main result of this
section. We first consider the case of faithful action.
\begin{lem} \label{lem:patching-isom}
Suppose $\psi: L \surj P$.
Assume that $R$ satisfies $(\dagger)$ and that every finitely generated 
projective  $A^G$-module is extended from $R$. 
Let $E \in \mbox{$(A$-$G)$-proj}$ and $F \in \mbox{$(R$-$G)$-proj}$.
Suppose there exist $G$-equivariant isomorphisms
$\eta: E|_{U} \xrightarrow{\simeq} F_A|_{U}$ and $\phi: E|_{V} 
\xrightarrow{\simeq} F_A|_{V}$, where $U = X \setminus Y$ denotes the 
big torus of $X$ and $V = \Spec(A[h^{-1}])$ for some $h \in A^G$ 
such that $h \equiv 1 \ (\mbox{mod} \ J)$, where $J$ is the defining ideal
of the inclusion $Y \inj X$.
Then $E \simeq F_A$ as $A$-$G$-modules.
\end{lem}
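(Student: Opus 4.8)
The plan is to glue the two given trivializations over the cover $\{U,V\}$ of $X$. First I would check that this really is a cover: since $h \equiv 1 \pmod J$ and $J$ is the ideal of $Y$, the element $h$ is nonvanishing along $Y$, so $Y \subseteq V = \Spec(A[h^{-1}])$; as $U = X\setminus Y$, we get $X = U\cup V$ with $U\cap V = \Spec(R[L][h^{-1}])$. The obstruction to gluing $\eta$ and $\phi$ into a global isomorphism $E\to F_A$ is the $G$-equivariant transition automorphism $\theta := \eta|_{U\cap V}\circ(\phi|_{U\cap V})^{-1}$ of $F_A|_{U\cap V}$: if I can factor $\theta = \theta_U\circ\theta_V$ with $\theta_U$ the restriction of an automorphism of $F_A|_U$ and $\theta_V$ the restriction of an automorphism of $F_A|_V$, then replacing $\eta$ by $\theta_U^{-1}\circ\eta$ and $\phi$ by $\theta_V\circ\phi$ makes the two maps agree on $U\cap V$, and they glue to the desired map.

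Next I would make the weight structure explicit. Since $R$ satisfies $(\dagger)$, \lemref{lem:rank-1} lets me write $F = \bigoplus_i R_{w_i}$, so $F_A = \bigoplus_i (R_{w_i})_A$ and, by \lemref{lem:Hom-rk-1}, any $G$-equivariant endomorphism of $F_A$ over an open $W$ is a matrix $(\alpha_{ij})$ with $\alpha_{ij}\in \Gamma(W)_{w_j-w_i}$. The crucial point is the behaviour over the big torus: because $\psi:L\surj P$ is surjective, each weight space $R[L]_{w_j-w_i}$ is free of rank one over $R[L]^G = R[\ker\psi]$, generated by the invertible monomial $e_{m_j-m_i}$ for fixed lifts $m_i\in L$ of $w_i$. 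Hence over $U$ (and over $U\cap V$) every $G$-automorphism of $F_A$ is a conjugate $D\,\mathbf{a}\,D^{-1}$, where $D = \mathrm{diag}(e_{-m_i})$ and $\mathbf{a}$ is an invertible matrix with \emph{invariant} entries. In particular $\theta = D\,\Theta\,D^{-1}$ with $\Theta\in GL_m\big((R[L][h^{-1}])^G\big)$.

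The heart of the argument is to factor the invariant matrix $\Theta = \mathbf{a}_U\,\mathbf{c}$ so that, after re-twisting by $D$, the factor $\theta_V = D\,\mathbf{c}\,D^{-1}$ extends across $Y$ while $\theta_U = D\,\mathbf{a}_U\,D^{-1}$ extends over $U$. Using \corref{cor:open-subset of torus} I obtain the cover $X' = U'\cup V'$ of the quotient with $p^{-1}(U')\subseteq U$ and $V' = \Spec(A^G[h^{-1}])$; together with the hypothesis that finitely generated projective $A^G$-modules are extended from $R$, this is what lets me control the invariant (diagonal) direction, realizing it as descending from an automorphism over the quotient via \lemref{lem:descent-of-auto}. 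To split off the part supported near $Y$, I would apply \lemref{lem:matrix} to push the off-diagonal entries of (a representative of) $\Theta$ into a high power $J^N$ while keeping the diagonal $\equiv 1 \pmod J$, and then invoke \lemref{lem:large N}: for $N$ large, conjugating back by $D$ multiplies each off-diagonal $c_{ij}\in J^N$ by the monomial $e_{m_j-m_i}$, and $e_{m_j-m_i}J^N\subseteq A$, so $\theta_V$ genuinely has entries in $A[h^{-1}]$ and extends over $V$.

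I expect the main obstacle to be exactly this factorization: the untwisting matrix $D$ is invertible only over the torus $U$, not over the neighbourhood $V$ of $Y$, so one cannot symmetrically absorb it into both sides. The entire point of pushing the off-diagonal entries into $J^N$ (\lemref{lem:matrix}) and then using the divisibility $J^N/e_m\subseteq A$ (\lemref{lem:large N}) is to certify that the $V$-factor, despite involving the non-invertible monomials $e_{\pm m_i}$, extends across $Y$; simultaneously the diagonal/invariant bookkeeping must be controlled through descent to the quotient (\lemref{lem:descent-of-auto}) and the extension property of $A^G$-projectives. Once $\theta$ is factored as $\theta_U\circ\theta_V$, the gluing of $\eta$ and $\phi$ is formal and produces the desired $A$-$G$-isomorphism $E\simeq F_A$.
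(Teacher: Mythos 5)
Your proposal follows essentially the same route as the paper's proof: reduce to factoring the transition automorphism over $U\cap V$, conjugate by the diagonal matrix of monomials $D$ (using $\psi:L\surj P$) to make the entries invariant, descend to the quotient via \lemref{lem:descent-of-auto} and factor there over the cover $X'=U'\cup V'$ from \corref{cor:open-subset of torus} using triviality of projective $A^G$-modules, and finally repair the $V$-factor with \lemref{lem:matrix} and \lemref{lem:large N} so that un-conjugating by $D$ lands in $A$. This is the paper's argument in the same order with the same key lemmas.
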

\begin{proof}
If $h$ is a unit in $A^G$, we have $V = X$ and we are done. 
So assume that $h$ is not a unit in $A^G$.
Let $p: X \to X'$ denote the quotient map as in \propref{prop:Toric-4}.
Set $V' = \Spec(A^G[h^{-1}])$ so that $V = p^{-1}(V')$ and let 
$U' \subseteq X'$ be as obtained in Corollary~\ref{cor:open-subset of torus} 
so that $U_1:=p^{-1}(U') \subseteq U$. Set $W' = U' \cap V'$ and 
$W = p^{-1}(W')$. 
Then $\eta: E|_{U_1} \to F_A|_{U_1}$ is a $G$-equivariant isomorphism.
Let $\Phi = \phi \circ \eta^{-1}$ denote the $G$-equivariant automorphism
of $F_A|_{W}$.

By Lemma \ref{lem:rank-1}, we can write 
$F = \bigoplus\limits_{i=1}^m \tilde{F}_{\lambda_i}$, 
where $\lambda_i \in P$ are not necessarily distinct
and $\tilde{F}_{\lambda_i}$ are free $R$-$G$-modules of rank $1$ and constant 
weight $e_{\lambda_i}$.
Since $L \surj P$, there exist monomials in $R[L]$ of any given weight. 
Suppose  $d_i \in R[L]$ is a monomial having weight $e_{\lambda_i}$.
Let $D$ be the diagonal matrix with diagonal entries $d_1, \cdots , d_m$.
Then $D \in \Hom_{R[L]G}(F_{R[L]},F'_{R[L]})$ is an isomorphism of 
$R[L]$-$G$-modules,  where $F'$ is a free $R$-$G$-module of rank $m$ and 
constant weight $e_0$.
Thus $\wt{\Phi} := D{\Phi}D^{-1}$ is a $G$-equivariant automorphism of 
$F'_A|_{W}$. 

Since $p: W \to W'$ is a uniform categorical quotient
which is an affine morphism, we can apply
Lemma~\ref{lem:descent-of-auto} to find a unique automorphism $f$ 
of $F'_{A^G}|_{W'}$ such that $\wt{\Phi} = p^*(f)$.
As $X' = U' \cup V'$, such an automorphism defines a locally free sheaf on 
$X'$ by gluing of sheaves (\cite[II, Exer.~1.22]{Hartshorne}).
Since every such locally free sheaf on $X'$ is free by assumption,
we have (again by \cite[II, Exer.~1.22]{Hartshorne}),
$f = f_2 \circ f_1$ for some automorphisms $f_1$ and $f_2$ of
$F'_{A^G}|_{U'}$ and $F'_{A^G}|_{V'}$, respectively.
Then $\wt{\Phi} = p^*(f) = p^*(f_2) \circ  p^*(f_1)$ and
hence we get $\Phi = (D^{-1} p^*(f_2)D) (D^{-1} p^*(f_1) D)$.
As $p^*(f_2)$ defines a matrix $P_1$ in $GL_m(A^G [h^{-1}])$ by an appropriate 
choice of basis, we can find $s \geq 0$ such that $P := h^s P_1 \in M_m(A^G)$.

By Lemma~\ref{lem:matrix}, we can find $\tilde{P}_N\in GL_m(A^G)$ 
such that $(P\tilde{P}_N)_{ij} \in J^N$ for $i \neq j$.
The $(ij)$-th entry of $D^{-1}P\tilde{P}_ND$ is $d_i^{-1}d_j(P\tilde{P}_N)_{ij}$. 
Taking $N$ sufficiently large, we may assume that 
$d_i^{-1}d_j(P\tilde{P}_N)_{ij} \in A$ by Lemma \ref{lem:large N}. 
Setting $\theta_1 = (D^{-1} \tilde{P}_N^{-1} p^*(f_1)D)$ and 
$\theta_2 = (D^{-1} h^{-s} P \tilde{P}_N D)$, we see that
$\theta_1$ and $\theta_2$ define G-equivariant automorphisms 
of $F_A|_{U_1}$ and $F_A|_{V}$, respectively,
such that $\theta_2 \circ \theta_1 = \Phi = \phi \circ \eta^{-1}$.

If we set $\eta' = \theta_1 \circ \eta$ and 
$\phi' = \theta^{-1}_2 \circ \phi$, we see that 
$\eta':E|_{U_1} \to F_A|_{U_1}$ and $\phi':E|_{V} \to F_A|_{V}$
are $G$-equivariant isomorphisms such that $\eta'|_W = \phi'|_W$.
By gluing therefore, we get a $G$-equivariant isomorphism $E \to F_A$ on $X$.
\end{proof}

\begin{thm} \label{Thm: Main thm 1}
Consider the set-up of \ \S~\ref{section:Set-up}. Assume that $R$ satisfies 
$(\dagger)$ and that finitely generated projective modules over 
$A_{\tau}$ and $(A_{\tau})^G$ are extended from $R$ for every $\tau \in \Delta$. 
Then every finitely generated projective $A$-$G$-module is trivial.
\end{thm}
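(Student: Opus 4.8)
The plan is to assemble the lemmas of this section into an induction on $\dim \sigma$. First I would apply \lemref{lem:Reductn-to-torus-subgp.} to reduce to a faithful action: with $H = \Spec(R[\psi(L)])$ the action of $G$ factors through $H$, one has $(A_\tau)^H = (A_\tau)^G$ for every $\tau \in \Delta$, and it suffices to trivialize finitely generated projective $A$-$H$-modules. Hence I may assume $\psi\colon L \surj P$. The base case $\dim \sigma = 0$ is immediate, since then $A = R$. For the inductive step I fix a finitely generated projective $A$-$G$-module $E$ and aim to produce a single finitely generated projective $R$-$G$-module $F$ together with $G$-equivariant isomorphisms of $E$ with $F_A$ over the big torus $U = X\setminus Y$ and over a neighborhood $V$ of $Y$, so that \lemref{lem:patching-isom} yields a global isomorphism $E \simeq F_A$.

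Over $U = \Spec(R[L])$ the faithful $G$-action is the translation action of the closed subgroup $G \hookrightarrow T_\sigma$ and is therefore free, with categorical quotient $\Spec((R[L])^G) = \Spec(R[M])$, where $M = {\rm Ker}(\psi)$ by \propref{prop:G-inv}. As $M$ is a torsion-free abelian group, $(\dagger)$ guarantees that every finitely generated projective $R[M]$-module is extended from $R$; the free-action descent of \corref{cor:H subgp-triv-act} (via \cite[Theorem~4.46]{vis}) then produces a finitely generated projective $R$-$G$-module $F$ and a $G$-isomorphism $E|_U \simeq F_A|_U$. This $F$ is my candidate type on all of $X$.

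To trivialize $E$ near $Y$ I would treat each codimension-one face $\tau_j$ in turn. The closed subscheme $X_{\tau_j} = \Spec(A/I_j)$ is again an affine $G$-toric scheme, of strictly smaller dimension, whose faces are faces of $\sigma$; the inductive hypothesis therefore applies and shows $E/I_j$ is trivial. Granting that these can be taken with the same $F$, that is $E/I_j \simeq F_{A/I_j}$ for all $j$, \lemref{lem:codim-1-face} patches them into $E/J \simeq F_{A/J}$, and \lemref{lem:extn-open-set} (using $(\dagger)$) extends this to a $G$-isomorphism $E_h \simeq F_{A_h}$ for some $h \in A^G$ with $h \equiv 1 \pmod{J}$, which is the required isomorphism over $V = \Spec(A[h^{-1}])$. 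The assumption that finitely generated projective modules over $A^G$ (the case $\tau = \sigma$) are extended from $R$ is exactly the remaining hypothesis of \lemref{lem:patching-isom}, so gluing the isomorphisms over $U$ and $V$ gives $E \simeq F_A$ and closes the induction.

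The main obstacle I anticipate is the matching in the previous paragraph. The inductive hypothesis only yields $E/I_j \simeq (F^{(j)})_{A/I_j}$ for a priori unrelated $R$-$G$-modules $F^{(j)}$, whereas both \lemref{lem:codim-1-face} and \lemref{lem:patching-isom} demand a single $F$, which must moreover agree with the module descended from the torus. Since $R$ satisfies $(\dagger)$, \lemref{lem:rank-1} expresses each of $F$ and $F^{(j)}$ as a direct sum of the rank-one modules $R_a$, so the matching reduces to comparing, for each weight $e_a$, the multiplicity of $R_a$. These multiplicities are invariants of $E$ and are locally constant on the connected scheme $\Spec(R)$, which should force $F^{(j)} \simeq F$; arranging that this identification is compatible with the equivariant structure, so that each face isomorphism can be taken into $F_{A/I_j}$ for the fixed $F$ by means of the $G$-equivariant retractions $\pi_{\tau_j}$ of ~\eqref{eqn:Toric-0}, is the delicate step that I expect to require the most care.
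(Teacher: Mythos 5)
Your architecture is the paper's: reduce to $\psi\colon L\surj P$ via \lemref{lem:Reductn-to-torus-subgp.}, induct on $\dim\sigma$, trivialize $E$ on the codimension-one faces and patch with \lemref{lem:codim-1-face}, extend to $V=\Spec(A_h)$ with \lemref{lem:extn-open-set}, descend over the big torus using freeness of the action and $(\dagger)$, and glue with \lemref{lem:patching-isom}. You have also correctly isolated the real difficulty --- producing one $F$ that works simultaneously for all codimension-one faces and for the torus --- but the mechanism you propose for resolving it does not work. The multiplicity of the rank-one module $R_a$ in a trivializing $R$-$G$-module is \emph{not} an invariant of $E$: already for $E=A$ one has $R_a\otimes_R (A/I_j)\simeq R_{a'}\otimes_R(A/I_j)$ as $(A/I_j)$-$G$-modules whenever $a-a'$ lies in the image under $\psi$ of the lattice of $\tau_j$ (this is precisely \corref{cor:fiber extn}), so the module $F^{(j)}$ furnished by the inductive hypothesis on the face $\tau_j$ is determined only up to weight shifts by $\psi(L_{\tau_j})$, and these ambiguities differ from face to face; over the big torus, where the action is free, only the \emph{rank} of the descended module is determined. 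Local constancy on the connected scheme $\Spec(R)$ addresses none of this --- it constrains ranks, not weights.

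The paper's resolution is structural rather than numerical. It fixes $F$ once and for all by restricting $E$ to the face $\tilde\tau$ of \emph{smallest} dimension (a torus, and a face of every face of $\sigma$), obtaining $E|_{X_{\tilde\tau}}\simeq F\otimes_R R[L_{\tilde\tau}]$ from \corref{cor:H subgp-triv-act} applied to $0\to Q_1\to P\to P/Q_1\to 0$, where $Q_1$ is the image under $\psi$ of the lattice spanning $\tilde\tau$. The statement proved by induction is then ``$E\simeq F_{R[L_\sigma]}$ for \emph{this} $F$''; since $\tilde\tau$ is also the minimal face of each codimension-one face $\tau_j$, the inductive hypothesis hands you $E/I_j\simeq F_{A/I_j}$ with the same $F$ for every $j$, and no a posteriori matching is needed. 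Finally, over the big torus the freeness of the action allows one to invoke \corref{cor:fiber extn} with the exact sequence $0\to P\to P\to 0\to 0$ to convert whatever module descends there into $F$ (only the rank matters in that case). If you re-anchor your candidate $F$ at the minimal face rather than at the torus and build that choice into the induction hypothesis, the rest of your argument goes through essentially as you wrote it.
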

\begin{proof}
We can assume that the map $\psi:L \to P$ is surjective by
Lemma~\ref{lem:Reductn-to-torus-subgp.}.
Let $E \in \mbox{$(A$-$G)$-proj}$. 
Since $R$ satisfies $(\dagger)$ and every finitely generated projective
$A$-module is extended from $R$, we see that $E$ is a free $A$-module 
of finite rank. In particular, Lemma \ref{lem:extn-open-set} applies.

Let $\tilde{\tau}$ denote the face of $\sigma$ of smallest dimension.
Then $X_{\tilde{\tau}}$ is a torus whose dimension is that of 
the largest subspace of $L_{\Q}$ contained in $\sigma$.
Let $M$ denote the smallest sublattice of $L$ such that 
$\tilde{\tau} = M_{\Q}$. 
Let $\phi: M \inj L \to P$ denote the composite map.
Consider the abelian groups, $Q_1 := Im(\phi)$ and $Q_2 := P/Q_1$.
Fix a finitely generated projective $R$-$G$-module $F$ 
such that 
$E|_{X_{\tilde{\tau}}} \simeq F {\underset{R} \otimes} R[L_{\tilde{\tau}}]$.
This exists by Corollary~\ref{cor:H subgp-triv-act},
applied to the sequence \\
\hspace*{5cm}
$0 \to Q_1 \to P \to P/{Q_1} \to 0$.

We prove by induction on the dimension of the cone $\sigma$ that 
$E \simeq F_{R[L_{\sigma}]}$.
Assume that $E|_{X_{\tau}} \simeq F_{R[L_{\tau}]}$ 
for all codimension $1$ faces $\tau$ of $\sigma$.
Let $Y = {\underset{\tau \in \Delta^1}\cup} X_{\tau}$ be as before.
We first apply Lemma~\ref{lem:codim-1-face} to get 
an isomorphism $\wt{\phi}: E/J \simeq F_{A/J}$.
We next apply Lemma~\ref{lem:extn-open-set} to find $h \in A^G$ such that 
$\wt{\phi}$ extends to an isomorphism $\phi$ on $V = \Spec(A_h) \supseteq Y$. 

Applying Corollaries~\ref{cor:H subgp-triv-act} and \ref{cor:fiber extn}
to the torus $T_{\sigma} = \Spec(R[L])$, 
there exists an $R[L]$-$G$-module isomorphism
$\eta: E|_{T_{\sigma}} \xrightarrow{\simeq} F_{R[L]} = F_A|_{T_{\sigma}}$ 
(consider the exact sequence $0 \to P \to P \to 0 \to 0$ 
and note that the action of $G$ on $T_{\sigma}$ is free).
We now apply Lemma~\ref{lem:patching-isom} to conclude that $E \simeq F_A$.
This completes the induction step and proves the theorem.
\end{proof}

As an easy consequence of Corollary~\ref{cor:Free-G-inv-L} and 
Theorem~\ref{Thm: Main thm 1}, we obtain the following.

\begin{cor}\label{cor:Main thm 1-0}
Consider the set-up of \ \S~\ref{section:Set-up} and assume that $R$ is
a principal ideal domain. Then every finitely generated projective 
$A$-$G$-module is trivial.
\end{cor}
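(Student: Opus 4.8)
The plan is to deduce this directly from \thmref{Thm: Main thm 1} by checking its two hypotheses when $R$ is a principal ideal domain. First I would record that $R$ satisfies $(\dagger)$: this is precisely case (1) of \thmref{thm:Free}. It then remains to verify that, for every face $\tau \in \Delta$, all finitely generated projective modules over $A_\tau = R[L_\tau]$ and over $(A_\tau)^G$ are extended from $R$.

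The key point is the nature of the monoids attached to faces. Each $L_\tau = \tau \cap L$ is cancellative and torsion-free, being the intersection of a rational polyhedral cone with a lattice (as already noted for $\sigma$ itself in \S~\ref{subsection:tor-sch}). Moreover $L_\tau$ is normal: if $x \in G(L_\tau) \subseteq L$ and $nx \in \tau$ for some $n > 0$, then $x = \tfrac{1}{n}(nx) \in \tau$ since $\tau$ is a cone, whence $x \in L_\tau$; in particular $L_\tau$ is semi-normal. Thus $Q = L_\tau$ satisfies the monoid hypotheses of \corref{cor:Free-G-inv-L} (cancellative, torsion-free, semi-normal), which over a principal ideal domain suffice even in the presence of non-trivial units; and the $R$-$G$-algebra structure on $A_\tau$ is exactly the one induced by $\psi|_{L_\tau} : L_\tau \to P$ through \eqref{eqn:Grp-Alg-0}.

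Applying \corref{cor:Free-G-inv-L} with $Q = L_\tau$ then shows that every finitely generated projective module over $A_\tau$ and over $(A_\tau)^G$ is free. Since a free module over any $R$-algebra $B$ has the form $R^n \otimes_R B$ and is therefore extended from the free $R$-module $R^n$, both families are extended from $R$. Hence the hypotheses of \thmref{Thm: Main thm 1} hold for all $\tau \in \Delta$, and invoking that theorem yields that every finitely generated projective $A$-$G$-module is trivial, as claimed.

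As the statement is billed as an easy consequence, there is no genuine obstacle here; the only steps requiring a moment's care are the bookkeeping that the monoid $L_\tau$ attached to a face is again normal (so that \corref{cor:Free-G-inv-L} applies to $(A_\tau)^G$ and not merely to $A_\tau$), and the trivial observation that being free over $A_\tau$ upgrades to being extended from $R$ in the sense demanded by \thmref{Thm: Main thm 1}.
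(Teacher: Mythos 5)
Your proposal is correct and follows exactly the route the paper intends: the paper states this corollary as an immediate consequence of Corollary~\ref{cor:Free-G-inv-L} and Theorem~\ref{Thm: Main thm 1} without writing out the details, and your argument is precisely the natural filling-in of those details (checking $(\dagger)$ via Theorem~\ref{thm:Free}(1), verifying that each face monoid $L_\tau$ is cancellative, torsion-free and normal so that Corollary~\ref{cor:Free-G-inv-L} applies to both $A_\tau$ and $(A_\tau)^G$ despite possible non-trivial units, and noting that freeness gives extendedness from $R$). No gaps.
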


\vskip .3cm

\section{Vector bundles over $\A^n_R \times \G^r_{m,R}$} 
\label{section:R-Trivial}
In this section, we apply Theorem~\ref{Thm: Main thm 1} to prove
triviality of $G$-equivariant projective modules over polynomial
and Laurent polynomial rings. 
When $R$ satisfies $(\dagger \dagger)$, we have the following 
answer to the equivariant Bass-Quillen question.

\begin{thm}\label{thm:Main-2}
Let $R$ be a regular ring and
let $R[x_1, \cdots ,x_n, y_1, \cdots , y_r]$ be a  
polynomial $R$-algebra with a linear $G$-action with $n, r \ge 0$.
Then the following hold.
\begin{enumerate}
\item
If $R$  satisfies $(\dagger \dagger)$ and $A = R[x_1, \cdots , x_n]$, then 
every finitely generated  projective $A$-$G$-module is trivial. 
\item
If $R$ is a PID and $A = R[x_1, \cdots , x_n, y^{\pm 1}_1, \cdots , y^{\pm 1}_r]$,
then every finitely generated projective $A$-$G$-module is trivial. 
\end{enumerate}
\end{thm}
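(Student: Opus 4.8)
The plan is to exhibit $A$ as the coordinate ring of an affine $G$-toric scheme and then to apply Theorem~\ref{Thm: Main thm 1}. Recall that the latter has exactly two hypotheses: that $R$ satisfy $(\dagger)$, and that finitely generated projective modules over $A_\tau$ and $(A_\tau)^G$ be extended from $R$ for every face $\tau$ of the relevant cone. Thus the whole argument reduces to verifying these two conditions in each of the two cases, for which I would lean on the freeness results of \S~\ref{section:mon-alg}.

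For (1), I would first record that $(\dagger\dagger)$ implies $(\dagger)$: specializing the definition of $(\dagger\dagger)$ to the trivial monoid $Q = 0$ gives precisely that finitely generated projective modules over $R[\Z^n]$ are extended from $R$ for all $n \ge 0$, which together with the freeness of projective $R$-modules is exactly $(\dagger)$. By Example~\ref{exm:Linear-action}(1) (which uses Lemma~\ref{lem:rank-1} to diagonalize the action), $\Spec(A)$ with $A = R[x_1, \dots, x_n]$ is a $G$-toric scheme for the cone $\sigma = \Q_+^n$ in $L_{\Q} = \Q^n$. Its faces are indexed by subsets $S \subseteq \{1, \dots, n\}$ and have face rings $A_\tau = R[\Z_+^S]$, again polynomial rings. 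Since $\Z_+^S$ is cancellative, torsion-free, semi-normal and has no non-trivial unit, Corollary~\ref{cor:Free-G-inv} shows that finitely generated projective modules over $A_\tau$ and $(A_\tau)^G$ are free, hence extended from $R$. Both hypotheses of Theorem~\ref{Thm: Main thm 1} now hold, and its conclusion is the desired triviality.

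For (2), a principal ideal domain satisfies $(\dagger)$ by Theorem~\ref{thm:Free}. By Example~\ref{exm:Linear-action}(2), $\Spec(A)$ with $A = R[x_1, \dots, x_n, y^{\pm 1}_1, \dots, y^{\pm 1}_r]$ is a $G$-toric scheme for $\sigma = \Q_+^n \oplus \Q^r$ in $L_{\Q} = \Q^{n+r}$; note that $\sigma$ spans $L_{\Q}$, so it is strongly convex in the sense of \S~\ref{subsection:tor-sch}. The faces of $\sigma$ are indexed by subsets $S \subseteq \{1, \dots, n\}$, with face rings $A_\tau = R[\Z_+^S \oplus \Z^r]$. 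The monoid $\Z_+^S \oplus \Z^r$ is cancellative, torsion-free and semi-normal, but the $\Z^r$-summand now contributes non-trivial units, so Corollary~\ref{cor:Free-G-inv} no longer applies. This is exactly where the PID hypothesis enters: Corollary~\ref{cor:Free-G-inv-L} is tailored to allow non-trivial units at the cost of requiring $R$ to be a principal ideal domain, and it again yields that finitely generated projective modules over $A_\tau$ and $(A_\tau)^G$ are free, hence extended from $R$. Theorem~\ref{Thm: Main thm 1} then completes the proof.

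The computations are routine; the only genuine subtlety, and the reason the two parts demand different conditions on $R$, is the appearance of units in the face monoids of the Laurent case. This blocks the $(\dagger\dagger)$-based Corollary~\ref{cor:Free-G-inv} and forces the stronger principal-ideal-domain assumption, handled through Corollary~\ref{cor:Free-G-inv-L}.
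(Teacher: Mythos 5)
Your proposal is correct and follows essentially the same route as the paper: exhibit $\Spec(A)$ as a $G$-toric scheme via Example~\ref{exm:Linear-action} and feed the freeness statements of Corollaries~\ref{cor:Free-G-inv} and \ref{cor:Free-G-inv-L} into Theorem~\ref{Thm: Main thm 1} (the paper packages case (2) as Corollary~\ref{cor:Main thm 1-0}, which is exactly your argument). You are in fact slightly more careful than the paper in spelling out the implication $(\dagger\dagger)\Rightarrow(\dagger)$ and the explicit description of the face monoids.
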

\begin{proof}
As shown in  Example \ref{exm:Linear-action}, $\Spec(A)$ is an affine toric 
$G$-scheme in both the cases.
To prove (1), note that $R$ satisfies the hypotheses of 
Theorem~\ref{Thm: Main thm 1} by Corollary \ref{cor:Free-G-inv}. 
Therefore, (1) follows from Theorem~\ref{Thm: Main thm 1}. 
Similarly, (2) is a special case of Corollary~\ref{cor:Main thm 1-0}.
\end{proof}

\vskip .2cm

\subsection{Vector bundles over $\A^n_R$ without condition
$(\dagger \dagger)$}\label{subsection:R-Trivial-1}
Let $R$ be a noetherian ring and let $G = \Spec(R[P])$ be a diagonalizable
group scheme over $R$.
We now show that if the localizations of $R$ satisfy $(\dagger \dagger)$, then
the equivariant vector bundles over $\A^n_R$ can be extended from
$\Spec(R)$. In order to show this, we shall need the following equivariant
version of Quillen's Patching lemma (see \cite[Lemma~1]{Quillen}). 
In this section, we shall allow our $R$-$G$-algebras to be non-commutative
(see \S~\ref{section:GSA}).

Given a (possibly non-commutative) $R$-$G$-algebra $A$, a polynomial
$A$-$G$-algebra is an $R$-$G$-algebra $A[t]$ which is a polynomial 
algebra over $A$ with indeterminate $t$ such that the inclusion
$A \inj A[t]$ is a morphism of $R$-$G$-algebras and $t \in A[t]$ is
semi-invariant (see \S~\ref{section:Inv}). 
For a polynomial $A$-$G$-algebra $A[t]$, let 
$(1 + tA[t])^{\times}$ denote the (possibly non-commutative) group of
units $\phi(t) \in A[t]$ such that $\phi(0) =1$.

Given an $A[t]$-$G$-module $M$ (with $A$ commutative),
we shall say that $M$ is extended from
$A$ if there is an $A$-$G$-module $N$ and an $A[t]$-$G$-linear isomorphism
$\theta: N {\underset{A}\otimes} A[t] \xrightarrow{\simeq} M$.
It is easy to check that this condition is equivalent to saying that
there is an $A[t]$-$G$-linear isomorphism
$\theta: (M/{tM}) {\underset{A}\otimes} A[t] \xrightarrow{\simeq} M$.

\vskip .3cm

\begin{lem}$($Equivariant Patching Lemma$)$\label{lem:Patching}
Let $(A, \phi)$ be an $R$-$G$-algebra 
and let $(A[t], \wt{\phi})$ be a polynomial $A$-$G$-algebra as above.
Let $0 \neq f \in R$ and let $\theta(t) \in (1 + tA_f[t])^{\times}$ 
be a $G$-invariant polynomial. 
Then there exists $k \geq 0$ such that for any $a, b \in R$ with $a- b \in 
f^kR$, we can find a $G$-invariant element $\psi(t) \in (1 + tA[t])^{\times}$
with $\psi_f(t) = \theta(at) \theta(bt)^{-1}$.
\end{lem}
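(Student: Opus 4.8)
The plan is to run Quillen's patching argument \cite[Lemma~1]{Quillen} while carrying the $G$-action along at every step. First I would adjoin to $A$ two central indeterminates $Y,Z$ bearing the trivial (weight-zero) $G$-action, so that $A[Y,Z]$ is again an $R$-$G$-algebra and $A[Y,Z][t]$ is a polynomial $A[Y,Z]$-$G$-algebra in which $t$ keeps its semi-invariant weight. Inside $(1 + tA_f[Y,Z][t])^{\times}$ I form the element $\Theta(Y,Z,t) := \theta\big((Y+Z)t\big)\,\theta(Yt)^{-1}$. The idea is then to clear denominators by the substitution $Z \mapsto f^{k}Z$ for $k \gg 0$ and afterwards specialize $Y = b$, $Z = c$, where $a - b = f^{k}c$ with $b,c \in R$; since $\Theta(b, f^{k}c, t) = \theta(at)\theta(bt)^{-1}$, this yields the desired $\psi$.

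The genuinely new, equivariant, content is that each of these operations respects $G$-invariance. Because $t$ is semi-invariant, the comodule axioms force its weight $\lambda \in R[G]$ to be group-like, hence a unit in $R[G]$; writing $\theta(t) = 1 + \sum_{i\ge 1} c_i t^i$, the $G$-invariance of $\theta$ is equivalent to each $c_i$ being semi-invariant of weight $\lambda^{-i}$ (see \S\ref{section:Inv}). As $f,Y,Z$ are central and $G$-invariant, both $(Y+Z)t$ and $Yt$ are again semi-invariant of weight $\lambda$, so $\theta((Y+Z)t)$ and $\theta(Yt)$ are $G$-invariant; since the comodule map $\rho$ is an algebra homomorphism, inverses of invariant units are invariant, and therefore $\Theta$ is $G$-invariant.

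For the equivariant clearing of denominators I would exploit that localization at the central $G$-invariant element $f$ is a morphism of $R$-$G$-algebras, hence commutes with the weight spaces, giving $(A_f)_{\mu} = (A_{\mu})_f$ for every group-like $\mu$. Observing that $\Theta \equiv 1$ modulo $Z$ and modulo $t$, I write $\Theta = 1 + Zt\,\mu(Y,Z,t)$; each coefficient of $\mu$ is then a semi-invariant element of $A_f$, so for $k$ large enough $f^{k}$ times it lifts to a semi-invariant element of $A$ of the matching weight, and enlarging $k$ further also kills the $f$-torsion discrepancy between a chosen lift and its prescribed weight. Thus $\Theta(Y, f^{k}Z, t)$ is the image of a genuinely $G$-invariant $\wt\Theta \in 1 + tA[Y,Z][t]$, and $\psi(t) := \wt\Theta(b,c,t)$ is $G$-invariant, lies in $1 + tA[t]$, and satisfies $\psi_f(t) = \theta(at)\theta(bt)^{-1}$.

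The step I expect to need the most care is promoting $\psi$ from a mere lift of a unit to a genuine element of $(1 + tA[t])^{\times}$. The inverse $\theta(bt)\theta(at)^{-1}$ is produced by the same recipe via the cocycle identity $\Theta(Y+Z',Z,t)\,\Theta(Y,Z',t) = \Theta(Y,Z+Z',t)$, which exhibits $\Theta(Y,f^{k}Z,t)^{-1}$ as $\Theta(Y+f^{k}Z,-f^{k}Z,t)$. When $f$ is a non-zero-divisor on $A$ --- the situation in our applications, where $A$ is flat over a domain $R$ --- the map $A[t] \to A_f[t]$ is injective, so $\psi$ is uniquely determined by $\psi_f$ and is automatically both a unit and $G$-invariant, with nothing further to check; in general the $f$-torsion is absorbed into the choice of $k$ exactly as above. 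The essential obstacle, and the only place where the equivariant hypothesis is really used, is arranging that this denominator-clearing can be carried out entirely inside the semi-invariant weight spaces, which is precisely what the centrality and $G$-invariance of $f$ guarantee.
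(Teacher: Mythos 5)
Your proposal is correct and follows exactly the route the paper takes: the paper's own proof is a one-paragraph remark that Quillen's Lemma~1 goes through verbatim and that the $G$-invariance of $\psi$ is checked "directly using the fact that $t$ is semi-invariant," with all details left to the reader. You have simply supplied those details — the weight bookkeeping $\rho(c_i)=\lambda^{-i}\otimes c_i$, the invariance of $\Theta(Y,Z,t)$, the equivariant clearing of denominators, and the lifting of the inverse via the cocycle identity — and they are accurate.
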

\begin{proof}
It is a straightforward generalization of \cite[Lemma~1]{Quillen} with
same proof in verbatim. Only extra thing we need to check is that
if $\theta(t) \in (1 + tA_f[t])^{\times} \cap (A[t])^G$, then
$\psi(t)$ (as constructed in {\sl loc. cit.}) is also $G$-invariant.
But this can be checked directly using the fact that $t$ is semi-invariant.
We leave the details for the readers.
\end{proof}

\vskip .2cm

\begin{lem}\label{lem:Extn-ideal}
Let $(A, \phi)$ and $(A[t], \wt{\phi})$ be as in Lemma~\ref{lem:Patching}.
Assume that $A$ is commutative.
Let $M$ be a finitely generated $A[t]$-$G$-module and let
$Q(M) = \{ f \in R | M_f$ is an extended $A_f[t]$-$G$-module$\}$.
Then $Q(M) \cup \{0\}$ is an ideal of $R$.
\end{lem}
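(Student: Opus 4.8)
The plan is to verify the two closure conditions that make $Q(M)\cup\{0\}$ an ideal, following Quillen's proof of his patching theorem \cite{Quillen} and carrying the $G$-action through by means of the Equivariant Patching Lemma (\lemref{lem:Patching}). Throughout I would write $N := M/tM$, a finitely generated $A$-$G$-module (the quotient of the finitely generated $A[t]$-$G$-module $M$ by $t$), and use that $M_f$ being extended means exactly that the canonical $A_f[t]$-$G$-linear map $N_f\otimes_{A_f}A_f[t]\to M_f$ is an isomorphism. I would note at the outset that, since $A[t]$ is noetherian, $M$ and $N$ are finitely presented, which is what makes localization of $N$ and of its endomorphism algebra behave well.

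For closure under multiplication, suppose $f\in Q(M)$ and $r\in R$; I would show $rf\in Q(M)\cup\{0\}$. The ring $R_{rf}$ is obtained from $R_f$ by further inverting $r$, so $M_{rf}$ is a localization of the extended module $M_f$. As localization commutes with the extension of scalars along $A_f\inj A_f[t]$ and with the comodule structure, one gets $M_{rf}\cong N_{rf}\otimes_{A_{rf}}A_{rf}[t]$, whence $rf\in Q(M)$ whenever $rf\neq 0$.

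For closure under addition, take $f,g\in Q(M)$ with $h:=f+g\neq 0$; I want $h\in Q(M)$, and I would first reduce to a comaximal gluing. Localizing the base at $h$ and applying the multiplicative step, both $M_{hf}$ and $M_{hg}$ are extended; over $R_h$ the images of $f,g$ satisfy $f/h+g/h=1$, so it suffices to prove: if $u+v=1$ in $R$ and $M_u,M_v$ are extended, then $M$ is extended. Here I would fix trivializations $\theta_u,\theta_v$ over $A_u[t]$ and $A_v[t]$, normalized to reduce to the identity modulo $t$ (using $N=M/tM$), and form their transition $\gamma:=\theta_v^{-1}\theta_u$, a $G$-invariant automorphism of $N_{uv}\otimes_{A_{uv}}A_{uv}[t]$ with $\gamma|_{t=0}=\id$. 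Viewing $C:=\End_A(N)$ as the (possibly non-commutative) endomorphism $R$-$G$-algebra supplied by \lemref{lem:AG-hom} — with $R$ central, exactly the setting the present section permits — and using the flat-base-change isomorphism for $\Hom$ out of a finitely presented module (as in the proof of \lemref{lem:AG-hom}, cf.\ \cite[Proposition~2.10]{Eisenbud}) so that $\End_A(N)_s\cong\End_{A_s}(N_s)$, the element $\gamma$ becomes a $G$-invariant element of $(1+t\,C_{uv}[t])^{\times}$. Applying \lemref{lem:Patching} to $C$ and the comaximal pair $u,v$ exactly as in \cite{Quillen}, I would factor $\gamma=\beta_v\,\beta_u^{-1}$ with $\beta_u\in(1+t\,C_u[t])^{\times}$ and $\beta_v\in(1+t\,C_v[t])^{\times}$, both $G$-invariant and the identity at $t=0$. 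Then $\theta_u\beta_u$ and $\theta_v\beta_v$ agree on the overlap $A_{uv}[t]$, and since the lemma produces $G$-invariant (hence, by \lemref{lem:AG-hom}(2), $A$-$G$-linear) data, they glue along $\Spec(R_h)=D(u)\cup D(v)$ to a global $A$-$G$-linear isomorphism $N\otimes_A A[t]\xrightarrow{\simeq}M$, so $h\in Q(M)$.

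The main obstacle is the factorization $\gamma=\beta_v\beta_u^{-1}$: this is the genuine content of Quillen patching and is precisely where \lemref{lem:Patching} enters, through the telescoping argument of \cite{Quillen} that exploits comaximality of $u$ and $v$. The only equivariant novelty is bookkeeping — I must check that every object produced (the normalized $\theta_u,\theta_v$, the transition $\gamma$, and the factors $\beta_u,\beta_v$) is $G$-invariant, and hence $A$-$G$-linear, which is guaranteed by the $G$-invariance clause of \lemref{lem:Patching} together with the identification $\Hom_{AG}=(\Hom_A)^{G}$ of \lemref{lem:AG-hom}(2). Once this is in place the argument is, as the paper remarks for \lemref{lem:Patching} itself, verbatim Quillen.
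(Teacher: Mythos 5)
Your proposal is correct and follows essentially the same route as the paper: both reduce to the comaximal case $u+v=1$, set $N=M/tM$, transfer the problem to the (possibly non-commutative) endomorphism $R$-$G$-algebra $\End_A(N)$ via \lemref{lem:AG-hom}, and invoke the $G$-invariance clause of the Equivariant Patching Lemma inside Quillen's transition-function factorization to produce $G$-equivariant gluing data. The only cosmetic difference is that you unwind the internals of Quillen's Theorem~1 (normalization, transition $\gamma$, factorization $\gamma=\beta_v\beta_u^{-1}$) where the paper cites it as a black box and merely checks equivariance of the resulting automorphisms $\psi_i(t)$.
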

\begin{proof}
We only need to check that $f_0, f_1 \in Q(M) \Rightarrow f_0 + f_1 \in Q(M)$. 
We can assume that $f_0 + f_1$ is invertible in $R$.
In particular, $(f_0, f_1) = R$. Set 
\[
A_i = A_{f_i}, \ M_i = M_{f_i} \ {\rm for} \ \ 
i = 0,1, \ \ N = {M}/{tM} \ \ {\rm and} \ \
E = \Hom_{A}(N,N).
\]

Given isomorphisms $u_i: N {\underset{A}\otimes} A_{i}[t] 
\xrightarrow{\simeq} M_i$, Quillen (see \cite[Theorem~1]{Quillen})
constructs automorphisms $\psi_i(t) \in  
\Hom_{A_i[t]}(N {\underset{A}\otimes} A_i[t], 
N {\underset{A}\otimes} A_i[t])  = E_i[t] $ for $i = 0, 1$ with the following
properties:
\[
u_i ':= u_i \cdot \psi_i(t): N {\underset{A}\otimes} A_{i}[t] 
\xrightarrow{\simeq} M_i \ \ {\rm and} \ \ (u'_0)_{f_1} = (u'_1)_{f_0}.
\]  

One should observe here that the isomorphism $E_i[t] \xrightarrow{\simeq}
\Hom_{A_i[t]}(N {\underset{A}\otimes} A_i[t], N {\underset{A}\otimes} A_i[t])$,
given by $f \otimes t^i \mapsto (n \otimes a \mapsto f(n) \otimes at^i)$,
is $R$-$G$-linear (see Lemma~\ref{lem:AG-hom}).

To prove the lemma, we only need to show that each $\psi_i(t)$ is
$G$-equivariant. By Lemma~\ref{lem:AG-hom}, this is equivalent to
showing that $\psi_i(t) \in (E_i[t])^G$ for $i =0,1$.
But this follows at once (as the reader can check by hand)
by observing that each $u_i$ is $G$-invariant and subsequently 
applying Lemma~\ref{lem:Patching} to $E_{f_0}$ and $E_{f_1}$, 
which are (possibly
non-commutative) $R$-$G$-algebras by Lemma~\ref{lem:AG-hom}.
\end{proof}

\vskip .3cm

The following result generalizes Theorem~\ref{thm:Main-2} to the case
when the base ring $R$ does not necessarily satisfy $(\dagger \dagger)$,
but whose local rings satisfy $(\dagger \dagger)$.
For examples of local rings satisfying $(\dagger \dagger)$, see 
Theorem~\ref{thm:Free}.

\vskip .2cm

\begin{thm}\label{thm:Main-3}
Let $R$ be a noetherian integral domain such that its localizations at all 
maximal
ideals satisfy $(\dagger \dagger)$. Let $G = \Spec(R[P])$ be a diagonalizable
group scheme over $R$. Let $V = \stackrel{n}{\underset{i =1}\oplus} Rx_i$
be a direct sum of one-dimensional free $R$-$G$-modules and let
$A = R[x_1, \cdots, x_n] = \Sym_R(V)$. Then every finitely generated projective $A$-$G$-module
is extended from $R$.
\end{thm}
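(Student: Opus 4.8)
The plan is to run Quillen's local--global patching in its equivariant form, proceeding by induction on the number $n$ of variables and peeling off one variable at a time by means of Lemma~\ref{lem:Extn-ideal}. The base case $n = 0$ is trivial, since then $A = R$. For the inductive step, write $A = B[x_n]$ with $B = R[x_1, \ldots, x_{n-1}] = \Sym_R(V')$, where $V' = \bigoplus_{i=1}^{n-1} R x_i$ carries the restricted linear $G$-action. As the action is linear we may assume $\phi(x_n) = e_{\lambda_n} \otimes x_n$, so that $x_n$ is semi-invariant and $B[x_n]$ is a polynomial $B$-$G$-algebra in the sense of \S\ref{subsection:R-Trivial-1}; in particular $x_n B[x_n]$ is a $G$-invariant ideal, so $E/x_n E$ is a finitely generated projective $B$-$G$-module whenever $E \in \mbox{$(A$-$G)$-proj}$.

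The crux is to show that such an $E$ is extended from $B$, i.e.\ that $E \simeq (E/x_n E) \otimes_B B[x_n]$. By Lemma~\ref{lem:Extn-ideal} the set $Q(E) \cup \{0\}$ is an ideal of $R$, so it suffices to prove $Q(E) \not\subseteq \fm$ for every maximal ideal $\fm$ of $R$; this forces $Q(E) = R$ and hence $1 \in Q(E)$. Fix $\fm$. Since $R_{\fm}$ satisfies $(\dagger \dagger)$, Theorem~\ref{thm:Main-2}(1) --- whose proof relies only on the condition $(\dagger \dagger)$ --- shows that $E_{\fm}$ is extended from $R_{\fm}$. Writing $A_{\fm} = B_{\fm}[x_n]$, this means $E_{\fm}$ is extended from $B_{\fm}$ in the variable $x_n$, so there is a $B_{\fm}[x_n]$-$G$-linear isomorphism $\theta_{\fm} : (E/x_n E)_{\fm} \otimes_{B_{\fm}} B_{\fm}[x_n] \xrightarrow{\simeq} E_{\fm}$.

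Next I would spread $\theta_{\fm}$ out over a neighbourhood of $\fm$ in $\Spec(R)$. Its source and target are finitely generated projective, hence finitely presented, $A$-$G$-modules, and by Lemma~\ref{lem:AG-hom} their equivariant homomorphisms are the $G$-invariants of the finitely presented $A$-module $\Hom_A(\_,\_)$. Since the localization $R \to R_{\fm}$ is flat and, for the diagonalizable group $G$, taking invariants merely extracts the weight-$0$ graded summand (Proposition~\ref{prop:graded structure}), the formation of $\Hom_{AG}$ commutes with this localization. Hence $\theta_{\fm}$ and its inverse already descend to $A_f$-$G$-linear maps for some $f \in R \setminus \fm$, and after inverting one further element of $R \setminus \fm$ the two composites become the respective identities (each differs from the identity by a homomorphism of finitely generated modules that vanishes at $\fm$, hence is annihilated by some element outside $\fm$). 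This produces $f \in Q(E)$ with $f \notin \fm$, whence $Q(E) = R$ and $E \simeq (E/x_n E) \otimes_B B[x_n]$.

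Finally, $E/x_n E$ is a finitely generated projective $B$-$G$-module, $B = R[x_1, \ldots, x_{n-1}]$ carries a linear $G$-action, and $R$ still satisfies the standing hypothesis; so the induction hypothesis gives $E/x_n E \simeq F \otimes_R B$ for some finitely generated projective $R$-$G$-module $F$. Combining, $E \simeq (F \otimes_R B) \otimes_B B[x_n] = F \otimes_R A$, which exhibits $E$ as extended from $R$ and completes the induction. I expect the main obstacle to be the equivariant spreading-out step: one must verify that a $G$-equivariant isomorphism available only after localizing at $\fm$ already exists after inverting a single element of $R$ outside $\fm$, with equivariance preserved throughout. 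The decisive inputs there are the identity $\Hom_{AG} = (\Hom_A)^G$ of Lemma~\ref{lem:AG-hom} together with the compatibility of invariants with flat base change for diagonalizable groups (Proposition~\ref{prop:graded structure}); the remainder is the finite-presentation bookkeeping underlying Quillen's patching theorem.
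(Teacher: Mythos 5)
Your proposal is correct and follows essentially the same route as the paper: induction on $n$, peeling off the last variable, invoking Theorem~\ref{thm:Main-2} over each localization $R_{\fm}$ and then Quillen's equivariant patching via Lemma~\ref{lem:Extn-ideal} to descend to $A_{n-1}$. The only difference is that you spell out the equivariant spreading-out step (that $M_{\fm}$ extended forces some $f\in Q(M)\setminus\fm$, using $\Hom_{AG}=(\Hom_A)^G$ and compatibility of invariants with localization), which the paper leaves implicit when it ``applies'' Lemma~\ref{lem:Extn-ideal}.
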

\begin{proof}
We prove the theorem by induction on $n$. There is nothing to prove when
$n = 0$ and the case $n = 1$ is an easy consequence of Theorem~\ref{thm:Main-2} 
and Lemma~\ref{lem:Extn-ideal}.
Suppose now that $n \ge 2$ and every projective 
$R[x_1, \cdots , x_{n-1}]$-$G$-module is extended from $R$.

Let $M$ be a  finitely generated projective $A$-$G$-module and set 
$A_i = R[x_1, \cdots , x_i]$.
It follows from Theorem~\ref{thm:Main-2} that $M_{\fm}$ is extended from
$(A_{n-1})_m$ for every maximal ideal $\fm$ of $R$.
We now apply Lemma~\ref{lem:Extn-ideal} to $(A_{n-1}, \phi_{n-1})$ and 
$(A_{n-1}[x_n], \wt{\phi_{n-1}}) = (A, \phi)$ to conclude that $M$ is
extended from $A_{n-1}$. It follows by induction that $M$ is extended from
$R$.
\end{proof}

\vskip .4cm

\section{Derived equivalence and equivariant $K$-theory}
\label{section:DEKT}
In this section, we shall apply the results of \S~\ref{section:Proj-Obj} to 
show that the derived equivalence of equivariant quasi-coherent sheaves
on affine schemes with group action implies the equivalence of the equivariant 
$K$-theory of these schemes. When the underlying group is trivial,
this was shown by Dugger and Shipley \cite{DS}. In the equivariant 
set-up too, we make essential use of 
some general results of Dugger and Shipley which we now recall. 

\subsection{Some results of Dugger and Shipley}
Recall that  an object $X$ in a  co-complete triangulated category 
$\mathcal{T}$ is called {\it compact}, if the natural map 
$
\varinjlim_{\alpha} \Hom_{\mathcal{T}}(X,Z_{\alpha})  \to 
\Hom_{\mathcal{T}}(X, \varinjlim_{\alpha} Z_{\alpha})
$
 is a bijection for every  direct system  $\{ Z_{\alpha}\}$ of objects in $\sT$ .

If $\sA$ is an abelian category, then an object of the category $Ch_{\sA}$ of
chain complexes over $\sA$ is called 
{\sl compact}, if its image in the derived category $D(\sA)$ is compact in the
above sense.  

The key steps in the proof of our main theorem of this section
are Propositions~\ref{prop:Freyd-I}, ~\ref{prop:Rick} and
the following general results of \cite{DS}.

\begin{thm}$($\cite[Theorem~D]{DS}$)$\label{thm:DS-1}
Let $\sA$ and $\sB$ be co-complete abelian categories which have sets
of small, projective, strong generators.  
Let $K_c(\sA)$ (resp. $K_c(\sB)$) denote the Waldhausen
$K$-theory of the compact objects in $Ch(\sA)$ (resp. $Ch(\sB)$).
Then
\begin{enumerate}
\item $\sA$ and $\sB$ are derived equivalent if and only if 
$Ch(\sA)$ and $Ch(\sB)$ are equivalent as pointed model categories.
\item If $\sA$ and $\sB$ are
derived equivalent, then $K_c(\sA)  \simeq  K_c(\sB)$.  
\end{enumerate}
\end{thm}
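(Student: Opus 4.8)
The plan is to reduce the statement to module categories over ringoids, where tilting-theoretic and model-categorical machinery becomes available. First I would invoke the variant of Freyd's embedding theorem already used in \propref{prop:Freyd-I} (see \cite[Exer.~5.3H]{Freyd} and \cite{Mitchell}): since $\sA$ carries a set $\{P_\alpha\}$ of small, projective, strong generators, the enriched Hom functor into the full subcategory $\sR$ spanned by the $P_\alpha$ identifies $\sA$ with the category $\sR\text{-Mod}$ of modules over the ringoid $\sR$, and likewise $\sB \simeq \sS\text{-Mod}$ for a ringoid $\sS$. Under this identification the generators become the representable modules $H_A$, which are small and projective, so $Ch(\sA)$ acquires the projective model structure whose homotopy category is the derived category $D(\sR\text{-Mod}) = D(\sA)$, and similarly for $\sB$.

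Granting this reduction, part (1) splits into two directions. The \emph{if} direction is soft: a Quillen equivalence of pointed model categories induces an equivalence $\Ho(Ch(\sA)) \simeq \Ho(Ch(\sB))$, and these homotopy categories are exactly $D(\sA)$ and $D(\sB)$. The \emph{only if} direction carries the substance. Given a triangulated equivalence $\Phi: D(\sR\text{-Mod}) \xrightarrow{\simeq} D(\sS\text{-Mod})$, I would transport the compact generators $\{H_A\}$ to a set $\{T_A := \Phi(H_A)\}$ of compact generators of $D(\sS\text{-Mod})$; because $\Phi$ is a triangulated equivalence and the $H_A$ are honest projectives concentrated in degree zero, the $T_A$ satisfy the tilting conditions $\Hom_{D(\sS)}(T_A, T_B[n]) = 0$ for $n \neq 0$, with $H^0$ recovering $\Hom_{\sR}(A,B)$. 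The key step is to rigidify this datum: one forms the differential graded endomorphism ringoid $\sE$ of a cofibrant-fibrant model of $\{T_A\}$ in $Ch(\sS\text{-Mod})$, and the Ext-vanishing forces the cohomology of $\sE$ to be concentrated in degree zero, so $\sE$ is formal and quasi-isomorphic to $\sR$. The Morita machinery of \cite{DS} (in the spirit of Schwede--Shipley) then yields a chain of Quillen equivalences $Ch(\sR\text{-Mod}) \simeq \Mod\text{-}\sE \simeq Ch(\sS\text{-Mod})$ of pointed model categories.

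For part (2), once part (1) supplies a Quillen equivalence $Ch(\sA) \simeq Ch(\sB)$, the induced derived equivalence restricts to the compact objects on each side. These compact objects form Waldhausen categories, with the quasi-isomorphisms as weak equivalences and the cofibrations of the projective model structure, and a Quillen equivalence induces a (zig-zag of) exact equivalences between them. Since Waldhausen $K$-theory is invariant under exact equivalences of Waldhausen categories, one concludes $K_c(\sA) \simeq K_c(\sB)$.

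The hard part will be the \emph{only if} direction of part (1): promoting a bare triangulated equivalence to a Quillen equivalence of the underlying model categories. A triangulated equivalence alone is too weak to pin down $K$-theory, which is precisely why the analogous scheme-theoretic question remains open beyond the affine case; the whole argument therefore hinges on the rigidification step, where the formality of the derived endomorphism ringoid of the tilting set lets one recover the strict model-categorical structure from the homotopical equivalence. This is exactly the content imported from \cite{DS}.
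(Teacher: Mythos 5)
This statement is quoted verbatim from Dugger--Shipley (\cite[Theorem~D]{DS}) and the paper supplies no proof of its own, so there is no internal argument to compare against; what can be said is that your sketch is a faithful outline of the argument in the cited source. The reduction to ringoid module categories via the Freyd--Mitchell identification, the transport of the representable generators to a set of tiltors, the observation that the derived endomorphism ringoid has cohomology concentrated in degree zero and is therefore connected to $\sR$ by a zig-zag of quasi-isomorphisms (via the truncation $\tau_{\le 0}$ --- ``formal'' is a slight overstatement, but correct in this one-degree situation), and the resulting zig-zag of Quillen equivalences are exactly the Schwede--Shipley/Dugger--Shipley mechanism. The one point where your phrasing is too casual is part (2): a Quillen equivalence does not literally restrict to an exact equivalence of the Waldhausen categories of compact cofibrant objects, since the derived functors preserve compactness only up to weak equivalence and need not carry one subcategory onto the other on the nose; this is precisely the gap that Dugger and Shipley's notion of a \emph{complete Waldhausen subcategory} and their Corollary~3.9 (quoted in this paper as Theorem~\ref{thm:DS-2}) are designed to close, and any honest write-up of part (2) has to route through that result rather than through bare ``invariance under exact equivalences.''
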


\begin{thm}$($\cite[Corollary~3.9]{DS}$)$\label{thm:DS-2}
Let $\sM$ and $\sN$ be pointed model categories connected by a 
zig-zag of Quillen equivalences.
Let $\sU$ be a complete Waldhausen subcategory of $\sM$, and let $\sV$ 
consist of all cofibrant objects in $\sN$ which are carried into $\sU$ by 
the composite of the derived functors of the Quillen equivalences. 
Then $\sV$ is a complete Waldhausen subcategory of $\sN$, 
and there is an induced zig-zag of weak equivalences between $K(\sU )$ and 
$K(\sV)$.
\end{thm}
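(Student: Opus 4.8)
The plan is to establish the statement first for a single Quillen equivalence and then propagate it along the zig-zag by induction on its length, defining at each stage the relevant subcategory of the next model category as the preimage of the previous one under the corresponding derived functor. So I would reduce to a single Quillen pair $F : \sN \rightleftarrows \sM : G$ with $F$ the left adjoint, in which case the composite of derived functors is $\mathbb{L}F$ and $\sV$ is the full subcategory of cofibrant objects $X \in \sN$ with $\mathbb{L}F(X) \in \sU$. Since $F$ preserves cofibrant objects and computes $\mathbb{L}F$ on them, this is the same as requiring $F(X) \in \sU$.

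Next I would verify that $\sV$ is a complete Waldhausen subcategory of $\sN$. Completeness is the crucial point: because $\sU$ is complete and $F$ sends weak equivalences between cofibrant objects to weak equivalences (Ken Brown's lemma), the condition $F(X) \in \sU$ is invariant under weak equivalence among cofibrant objects, so $\sV$ is closed under weak equivalences. The rest of the Waldhausen structure transfers automatically: as a left Quillen functor, $F$ preserves the zero object, cofibrations, and pushouts along cofibrations, so these operations keep us inside $\sV$.

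The heart of the argument is the equivalence $K(\sV) \simeq K(\sU)$, which I would obtain by applying Waldhausen's Approximation Theorem to the exact functor $F : \sV \to \sU$. The first hypothesis, that $F$ reflects weak equivalences, is immediate from $\mathbb{L}F$ being an equivalence of homotopy categories. For the factorization hypothesis, given $X \in \sV$ and a map $F(X) \to U$ in $\sU$, I would transpose it to $X \to G(U)$, factor this in $\sN$ as a cofibration $X \inj X'$ followed by a weak equivalence $X' \to G(U)$ (taking $U$, hence $G(U)$, fibrant), and transpose back to a map $F(X') \to U$; by naturality of the adjunction the original map then factors as $F(X) \to F(X') \to U$.

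I expect this factorization step to be the main obstacle, and it is exactly where a Quillen \emph{equivalence} is needed rather than a mere Quillen adjunction: the defining property of a Quillen equivalence --- that a map from a cofibrant object to a fibrant object is a weak equivalence if and only if its adjoint is --- is what guarantees that $F(X') \to U$ is a weak equivalence, and completeness of $\sU$ then forces $F(X') \in \sU$, so that $X'$ indeed lies in $\sV$. This closes the verification of the approximation hypotheses and yields the desired homotopy equivalence of $K$-theory spectra.
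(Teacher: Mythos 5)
The paper does not actually prove this statement: it is quoted verbatim from Dugger--Shipley (\cite[Corollary~3.9]{DS}) and used as a black box in the proof of Theorem~\ref{thm:DE-KE}, so there is no in-paper argument to compare yours against. Your overall architecture --- induct on the length of the zig-zag, reduce to a single Quillen equivalence $F:\sN\rightleftarrows\sM:G$, check that $\sV=\{X \text{ cofibrant}: F(X)\in\sU\}$ is a complete Waldhausen subcategory using Ken Brown's lemma and the exactness properties of a left Quillen functor, and then deduce $K(\sV)\simeq K(\sU)$ from Waldhausen's approximation theorem --- is the standard route and is essentially how Dugger and Shipley organize their proof. The first two steps are fine as you describe them.

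There is, however, a genuine gap in the approximation step, exactly at the point you identify as the heart of the argument. Objects of $\sU$ are required to be cofibrant, not fibrant, so you cannot simply ``take $U$ fibrant.'' If you pass to a fibrant replacement $U\inj \hat U$ (an acyclic cofibration, so that $\hat U$ is still cofibrant and, by completeness, still in $\sU$), then the adjunction-plus-factorization argument produces a cofibration $X\inj X'$ and a weak equivalence $F(X')\to \hat U$; the Quillen-equivalence criterion you invoke (``adjoint of a map from cofibrant to fibrant is a weak equivalence iff the map is'') applies only because the target is $\hat U$, not $U$. But Waldhausen's (App~2) demands a factorization $F(X)\to F(X')\to U$ with the second map a weak equivalence landing on $U$ itself, and there is no map $F(X')\to U$ in your construction. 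This is precisely the technical obstruction that makes the Dugger--Shipley proof longer than your sketch: one must either use a refined approximation statement that tolerates replacing the target up to weak equivalence (as in \cite{BMandel}, where only an equivalence of homotopy categories is needed), or insert an intermediate comparison showing that the relevant $K$-theory is unchanged when one restricts attention to maps with fibrant targets, with the completeness of $\sU$ doing real work at that point. A secondary, more minor issue: in the zig-zag some Quillen equivalences point the wrong way, so the ``composite of derived functors'' involves $\mathbb{R}G\simeq(\mathbb{L}F)^{-1}$ at those stages, and $\mathbb{R}G$ of a cofibrant object is not computed by applying $G$ directly; your reduction silently assumes every left adjoint points from $\sN$ toward $\sM$. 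Neither issue is fatal to the strategy, but as written the proof is incomplete at its central step.
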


\begin{thm}$($\cite[Theorems~4.2, 7.5]{DS}$)$\label{thm:DS-3}
Let $\sR$ and $\sS$ be two ringoids (see \S~\ref{section:Proj-Obj}).
Then the following conditions are equivalent.
\begin{enumerate}
\item
There is a zig-zag of Quillen equivalences between $Ch({\rm Mod}$-$\sR)$ and
$Ch({\rm Mod}$-$\sS)$.
\item
$D(\sR) \simeq D(\sS)$ are triangulated equivalent.
\item
The bounded derived categories of finitely generated projective 
$\sR$ and $\sS$-modules are triangulated equivalent.
\end{enumerate}
\end{thm}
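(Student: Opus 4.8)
The plan is to realize the three conditions as the combination of two inputs that are already available: the Dugger--Shipley comparison of Theorem~\ref{thm:DS-1}, applied to the module categories over the ringoids, for the equivalence $(1)\Leftrightarrow(2)$; and the ringoid generalization of Rickard's theorem (Proposition~\ref{prop:Rick}) for $(2)\Leftrightarrow(3)$. The observation that makes both inputs applicable is that for a ringoid $\sR$ the category ${\rm Mod}$-$\sR$ is a cocomplete (indeed Grothendieck) abelian category carrying the set $\{H_A \mid A\in{\rm Obj}(\sR)\}$ of small, projective, strong generators, which is precisely the hypothesis of Theorem~\ref{thm:DS-1}.

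First I would treat $(1)\Leftrightarrow(2)$. I equip $Ch({\rm Mod}$-$\sR)$ with its projective model structure; it is pointed (in fact stable), and its homotopy category is the unbounded derived category $D(\sR)$, with the analogous statements for $\sS$. A zig-zag of Quillen equivalences then induces an exact equivalence on homotopy categories, giving $(1)\Rightarrow(2)$ formally. Conversely, a triangulated equivalence $D(\sR)\simeq D(\sS)$ is exactly the assertion that ${\rm Mod}$-$\sR$ and ${\rm Mod}$-$\sS$ are derived equivalent, so Theorem~\ref{thm:DS-1}(1) upgrades it to an equivalence of $Ch({\rm Mod}$-$\sR)$ and $Ch({\rm Mod}$-$\sS)$ as pointed model categories, which is the same thing as a zig-zag of Quillen equivalences. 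All of the genuine homotopical work in this step is imported from \cite{DS}.

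Next I would handle $(2)\Leftrightarrow(3)$ by first identifying the compact objects. Since each $H_A$ is small and projective, $\Hom_{D(\sR)}(H_A,-)$ commutes with coproducts, so the $H_A$ are compact; hence the compact objects of $D(\sR)$ form exactly the thick subcategory they generate, namely the perfect complexes, which up to idempotent completion are the bounded complexes of finitely generated projective $\sR$-modules. In other words $D(\sR)^c$ is the bounded derived category of finitely generated projective $\sR$-modules, and likewise for $\sS$. For $(2)\Rightarrow(3)$, a triangulated equivalence preserves every colimit that exists, in particular coproducts, hence preserves compactness, and so restricts to an equivalence of the categories of compact objects. The converse $(3)\Rightarrow(2)$ is the substance of Rickard's theorem: the equivalence in $(3)$ exhibits a tilting complex, and its ringoid generalization (Proposition~\ref{prop:Rick}), proved in the appendix, promotes this to a derived equivalence $D(\sR)\simeq D(\sS)$ of the full module categories, using that $D(\sR)$ is compactly generated by the $H_A$.

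The step I expect to be the main obstacle is the rigidification hidden inside $(2)\Rightarrow(1)$: a triangulated equivalence records only homotopy-category data and in general does not determine the underlying homotopy-coherent structure, so lifting it to an actual zig-zag of Quillen equivalences is the delicate point. What makes the lift possible here is that ${\rm Mod}$-$\sR$ and ${\rm Mod}$-$\sS$ come with canonical chain-level enhancements, and the requisite uniqueness-of-enhancement statement is precisely the content of Theorem~\ref{thm:DS-1}. The remaining ingredients---the model structures, the computation of $D(\sR)^c$, and the extension from the compacts to the large category via tilting---are comparatively formal and rest on Proposition~\ref{prop:Freyd-I} and Proposition~\ref{prop:Rick}.
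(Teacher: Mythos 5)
The first thing to note is that the paper offers no proof of this statement at all: Theorem~\ref{thm:DS-3} is quoted verbatim from \cite[Theorems~4.2, 7.5]{DS} and used as a black box, so there is no internal argument to compare yours against. Judged on its own terms, your reconstruction gets the easy implications right --- $(1)\Leftrightarrow(2)$ does follow from \thmref{thm:DS-1}(1) applied to the module categories over the ringoids (which have the required small projective strong generators $\{H_A\}$), and $(2)\Rightarrow(3)$ is the formal preservation of compact objects together with the identification of $D(\sR)^c$ with the bounded homotopy category of finitely generated projectives.

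The gap is in $(3)\Rightarrow(2)$, which is the only direction with real content, and your proposed source for it does not supply it. Proposition~\ref{prop:Rick} points the wrong way: its \emph{hypothesis} is that $\sR$ and $\sS$ are derived equivalent (i.e.\ condition $(2)$), and its conclusion is an equivalence of $D^{hb,-}(\mbox{proj-}\sR)$ and $D^{hb,-}(\mbox{proj-}\sS)$ --- that is, of the bounded derived categories of finitely generated \emph{modules}, not the perfect complexes of condition $(3)$ --- so it cannot be invoked to promote an equivalence of compacts to an equivalence of $D(\sR)$ and $D(\sS)$. Worse, the proof of Proposition~\ref{prop:Rick} in the appendix explicitly cites \cite[Theorem~7.5]{DS}, i.e.\ the very theorem you are trying to prove, so routing $(3)\Rightarrow(2)$ through it would be circular within the paper's logical structure. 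What $(3)\Rightarrow(2)$ actually requires is the tilting argument of Rickard--Keller in its ringoid form: from the equivalence of compacts one extracts a set of tiltors with endomorphism ringoid $\sS$ and then builds the equivalence of the full (unbounded) derived categories, which is precisely the content of \cite[Theorems~4.2, 7.5]{DS}. That step has to be either cited, as the paper does, or reproved from scratch; it is not a corollary of the other results in this paper.
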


\subsection{Derived equivalence and $K$-theory under group action}
Let $R$ be a commutative noetherian ring and let $G$ be an affine group scheme 
over $R$. Let $(A, \phi)$ be an $R$-$G$-algebra and let 
$X = \Spec(A)$ be the associated affine $S$-scheme with $G$-action, where 
$S = \Spec(R)$.
We shall denote this datum in this section by $(R, G, A)$.
Let $Ch^G(A)$ denote the abelian category of unbounded chain complexes of 
$A$-$G$-modules and let $D^G(A)$ denote the associated derived
category. One knows that $D^G(A)$ is a cocomplete triangulated category. 

We shall say that $(R, G, A)$ has the {\it resolution property}, if for every 
finitely generated $A$-$G$-module $M$, there is a finitely generated 
projective $A$-$G$-module $E$, and a $G$-equivariant epimorphism $E \surj M$.

Recall that  a bounded chain complex  of finitely generated, projective  
$A$-$G$-modules is called a {\it strict perfect complex}.
A (possibly unbounded) chain complex of $A$-$G$-modules is called a 
{\sl perfect complex}, if it is isomorphic to a strict perfect complex in 
$D^G(A)$.
We shall denote the categories of strict perfect and perfect complexes of 
$A$-$G$-modules by ${Sperf}^G(A)$ and ${Perf}^G(A)$, 
respectively. It is known that  ${Sperf}^G(A)$ and ${Perf}^G(A)$ are both 
complicial biWaldhausen categories in the sense of \cite{TT} and there
is a natural inclusion ${Sperf}^G(A) \inj  {Perf}^G(A)$ of complicial 
biWaldhausen categories. As this inclusion induces an equivalence of the 
associated 
derived categories, it follows from \cite[Theorem~1.9.8]{TT} that this
induces a homotopy equivalence of the associated  Waldhausen $K$-theory 
spectra. We shall denote the common 
derived category  by $D^G({Perf}/{A})$ and the common $K$-theory spectrum by 
$K^G(A)$.  It follows from \cite[Theorem~1.11.7]{TT} that $K^G(A)$ is homotopy 
equivalent to the $K$-theory spectrum of the exact category
of finitely generated projective $A$-$G$-modules. 

Let $Ch^G_b(A)$ denote the category of bounded chain complexes of finitely 
generated $A$-$G$-modules and let $D^G_b(A)$ denote its
derived category. The Waldhausen $K$-theory spectrum of $Ch^G_b(A)$ will be 
denoted by $K'_G(A)$.
Let $Ch^{hb, -}(\mbox{$A$-$G$-proj})$ denote the category of chain complexes of 
finitely generated projective $A$-$G$-modules which are 
bounded above and cohomologically bounded. Let $D^{hb, -}(\mbox{$A$-$G$-proj})$ 
denote the associated derived category.

If $(R, G, A)$ has the resolution property, then every complex of  $Ch^G_b(A)$ 
is quasi-isomorphic to a complex of $Ch^{hb, -}(\mbox{$A$-$G$-proj})$ 
and vice-versa. It follows from \cite[Theorem~1.9.8]{TT} that they have 
homotopy equivalent Waldhausen $K$-theory spectra:
\begin{equation}\label{eqn:G-theory}
K'_G(A) \simeq K(Ch^{hb, -}(\mbox{$A$-$G$-proj})).
\end{equation}

\vskip .1cm

\begin{lem} \label{lem:Detection}
Assume that $(R, G, A)$ has the  resolution property.
Given any complex $K \in Ch^G(A)$, there exists a direct system of strict 
perfect complexes $F_{\alpha}$, and a quasi-isomorphism
 $$
 \varinjlim_{\alpha} F_{\alpha} \xrightarrow{\sim} K.
 $$
\end{lem}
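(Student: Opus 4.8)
The plan is to realize $K$ as a filtered colimit of strict perfect complexes mapping to it, and to verify the quasi-isomorphism by passing to homology. The structural input I would rely on is Proposition~\ref{prop:Freyd-I}: in the cases at hand $(A$-$G)$-Mod is equivalent to a module category $\sR$-Mod over a ringoid, hence is a Grothendieck category with a set of small, projective, strong generators (the rank one free modules $A{\underset{R}\otimes} R_a$). In particular filtered colimits are exact, so homology commutes with filtered colimits. It therefore suffices to produce a filtered system $\{(F_\alpha, f_\alpha)\}$ of strict perfect complexes $F_\alpha$ together with chain maps $f_\alpha\colon F_\alpha \to K$ such that the induced map $\varinjlim_\alpha H_n(F_\alpha) \to H_n(K)$ is an isomorphism for every $n$; the comparison $\varinjlim_\alpha F_\alpha \to K$ is then automatically a quasi-isomorphism.

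For the index category I would take the (essentially small) poset $\sI$ whose objects are all pairs $(F,f)$ with $F$ a strict perfect complex of $A$-$G$-modules and $f\colon F \to K$ a chain map, a morphism $(F,f)\to(F',f')$ being a chain map $F\to F'$ commuting with the maps to $K$. Essential smallness comes from the set of generators of Proposition~\ref{prop:Freyd-I}. This poset is filtered: given $(F_1,f_1)$ and $(F_2,f_2)$, the complex $F_1\oplus F_2$ with the map $f_1\oplus f_2$ is a common upper bound. The substance is then to show that the comparison on homology is both surjective and injective in the colimit over $\sI$.

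Surjectivity I would obtain by noting that every class is already hit by some object of $\sI$. Since the differentials are $A$-$G$-linear, the cycles $Z_n(K)$ and boundaries $B_n(K)$ are $A$-$G$-submodules; given a cycle $z\in K_n$, the resolution property furnishes a finitely generated projective $A$-$G$-module $P$ surjecting onto the submodule generated by $z$, and placing $P$ in degree $n$ gives a strict perfect complex with a chain map to $K$ whose image in $H_n(K)$ contains $[z]$. For injectivity I would kill spurious classes by attaching cells: if $x=[c]\in H_n(F)$ satisfies $f(c)=d_K(w)$, I take, via the resolution property, a finitely generated projective $Q$ surjecting onto the submodule $\langle c\rangle\subseteq Z_n(F)$ through a map $\delta\colon Q\to F_n$, adjoin $Q$ in degree $n+1$ with differential $\delta$ so that $c$ becomes a boundary, and extend $f$ over $Q$. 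Here projectivity of $Q$ is exactly what lets me lift $f\circ\delta\colon Q\to B_n(K)$ along the surjection $d_K\colon K_{n+1}\surj B_n(K)$, using that $f(\langle c\rangle)=\langle f(c)\rangle\subseteq B_n(K)$. The resulting $(F',f')\ge(F,f)$ is again strict perfect and satisfies $x\mapsto 0$ in $H_n(F')$, which is precisely what injectivity of $\varinjlim_\alpha H_n(F_\alpha)\to H_n(K)$ requires.

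The hard part is organizational rather than computational. Because the homology modules of an arbitrary unbounded $K$ need not be finitely generated nor supported in finitely many degrees, one cannot hope to kill everything along a single sequence, which is why I work over the genuinely filtered poset $\sI$ and only demand that surjectivity and injectivity be witnessed cofinally. The two features that make the construction close up are the resolution property, which keeps every approximation a \emph{strict} perfect complex by providing finitely generated projective covers of the finitely generated subobjects of cycles, and the projectivity of these covers, which is what allows the attaching maps into $K$ to exist. Exactness of filtered colimits in the Grothendieck category $(A$-$G)$-Mod then upgrades the homology isomorphism to the desired quasi-isomorphism $\varinjlim_\alpha F_\alpha \xrightarrow{\sim} K$.
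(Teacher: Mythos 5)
There is a genuine gap, and it sits exactly where you locate the ``organizational'' difficulty: your index category $\sI$ is not a poset and, more importantly, is not filtered, so neither the element-wise description of $\varinjlim_\alpha H_n(F_\alpha)$ nor the commutation of homology with the colimit is available; you also have not produced the \emph{direct system} that the lemma asks for. Between two objects of $\sI$ there are in general many morphisms, so besides upper bounds for pairs (which $F_1\oplus F_2$ does provide) filteredness requires that any two parallel arrows be coequalized further along, and this fails. Concretely, take $G$ trivial, $A=k[x]$, and $K$ the complex with $K_0=A/(x)$ and zero elsewhere; let $F_1=F_2=A$ in degree $0$ with $f_1=f_2$ the quotient map, and let $u=\id$, $v=(1+x)\cdot$. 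Both are morphisms $(F_1,f_1)\to(F_2,f_2)$ in $\sI$, but any $w:(F_2,f_2)\to(F_3,f_3)$ with $wu=wv$ must satisfy $x\cdot w(1)=0$ in the finitely generated projective, hence torsion-free, $A$-module $(F_3)_0$, forcing $w=0$ and then $f_3\circ w=0\neq f_2$. So no coequalizing arrow exists. Your local moves (covering a cycle by a finitely generated projective via the resolution property, attaching a cell $Q$ along $\delta$ and lifting $f\circ\delta$ through $K_{n+1}\surj B_n(K)$ by projectivity) are sound, but they must be strung together into an honest directed system by an inductive construction rather than by passing to the comma category of all approximations.

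This is precisely how the paper proceeds: it does not build the system by hand but invokes \cite[Proposition~2.3.2]{TT}, whose inductive machinery (their 1.9.5) produces a genuine direct system indexed by a directed set, first for cohomologically bounded above complexes and then for general $K$ via truncations. The only point to be checked in the equivariant setting is hypothesis 1.9.5.1, namely that any surjection $M\surj N$ of $A$-$G$-modules can be refined by a map from a (possibly infinite) direct sum of finitely generated projective $A$-$G$-modules; this follows from the resolution property together with the fact that every $A$-$G$-module is the filtered union of its finitely generated $A$-$G$-submodules (\cite[Proposition~15.4]{LMB}). That last fact is also what you implicitly need when you assert that the submodule ``generated by $z$'' admits a finitely generated projective cover: the $A$-submodule $Az$ need not be $G$-stable, and one must pass to the finitely generated $A$-$G$-submodule it generates. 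If you want a self-contained proof, the fix is to replace $\sI$ by a directed set built by inductive cell attachment in the style of TT 1.9.5, not by the full category of approximations.
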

\begin{proof}
 The non-equivariant case of this result was proven in  
\cite[Proposition~2.3.2]{TT} and similar proof
 applies here
 as well once we verify that the hypothesis 1.9.5.1 (of {\sl loc. cit.}) holds 
for $\mathcal{A} =$ ($A$-$G$)-Mod,
$\mathcal{D} =$ the category of (possibly infinite) direct sums of finitely 
generated projective A-G-modules and
$\mathcal{C} =$ the category of cohomologically bounded above complexes in 
$Ch^G(A)$. For this, it is enough 
to show that if $M \to N$ is a surjective map of $A$-$G$-modules, then 
there is a
(possibly infinite) direct sum $F$ of finitely generated projective $A$-$G$ 
modules and an $A$-$G$-linear map $F \to M$ such that the composite
$F \to M \to N$ is surjective. But $M$ is a direct limit of its finitely
generated $A$-$G$-submodules, as shown in \cite[Proposition~15.4]{LMB}
(see also \cite[Lemma~2.1]{Thom} when $G$ is faithfully flat over $S$).
Therefore, it follows from the resolution property that $M$ is a quotient of a 
direct sum of finitely generated projective $A$-$G$-modules.
\end{proof}

In order to lift the derived equivalence to an equivalence of Waldhausen 
categories, we need to use  model structures on the
category of chain complexes of $A$-$G$-modules. We refer to \cite{Hovey} for 
model structures and various related terms that we shall use here.   
Let $\sA$ be a Grothendieck abelian category with enough projective objects 
and let $Ch_{\sA}$ denote the category of unbounded chain complexes over $\sA$.
Recall from \cite[Proposition~7.4]{Hov} that $Ch_{\sA}$ has the  
{\sl projective model structure}, in which the
weak equivalences are the quasi-isomorphisms, fibrations are  term-wise 
surjections and the cofibrations are the  maps having the left lifting property
with respect to fibrations which are also weak equivalences.

\begin{lem}\label{lem:Perf-cofib}
Let $E$ be a bounded above complex of projective objects in a Grothendieck 
abelian category $\sA$ with enough projective objects. Then $E$ is cofibrant 
in the projective model structure on $Ch_{\sA}$.
\end{lem}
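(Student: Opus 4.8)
The plan is to verify cofibrancy straight from the lifting characterization. By definition $E$ is cofibrant exactly when $0\to E$ is a cofibration, i.e.\ has the left lifting property against every trivial fibration. So I fix a trivial fibration $p\colon X\to Y$ (a chain map that is a termwise surjection and a quasi-isomorphism) together with a chain map $f\colon E\to Y$, and I must produce a lift $\tilde f\colon E\to X$ with $p\tilde f=f$. The first observation is that the kernel complex $K:=\ker(p)$ is acyclic: the termwise exact sequences $0\to K_n\to X_n\to Y_n\to 0$ assemble into a short exact sequence of complexes, and since $p$ is a quasi-isomorphism the long exact homology sequence forces $H_\ast(K)=0$.

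Next I would split the lift into a crude part and a correction. Because each $E_n$ is projective and each $p_n$ is an epimorphism, I can choose, degreewise and ignoring differentials, a graded lift $s=(s_n)$ with $p_ns_n=f_n$ (taking $s_n=0$ where $E_n=0$). Its failure to be a chain map is the defect $\delta_n:=d^X_ns_n-s_{n-1}d^E_n$. Since $f$ is a chain map, $p\delta=0$, so $\delta$ factors through $K$, and a direct computation gives $d^K\delta+\delta d^E=0$; thus $\delta$ is a cycle of degree $-1$ in the Hom-complex $\Hom^\bullet_{\sA}(E,K)$. A lift of the form $\tilde f=s-c$, with $c$ a degree $0$ graded map into $K$, then satisfies $p\tilde f=f$ automatically, and it is a chain map precisely when $\delta=d^Kc-c\,d^E$, i.e.\ when $\delta$ is a boundary in $\Hom^\bullet_{\sA}(E,K)$.

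Thus the whole problem reduces to the key lemma that $\Hom^\bullet_{\sA}(E,K)$, whose term in degree $n$ is $\prod_m\Hom_{\sA}(E_m,K_{m+n})$, is acyclic whenever $E$ is a bounded above complex of projectives and $K$ is acyclic; granting this, the degree $-1$ cycle $\delta$ is a boundary, the correction $c$ exists, and $\tilde f=s-c$ is the desired lift. I would prove the lemma by viewing $\Hom^\bullet_{\sA}(E,K)$ as the product total complex of the double complex $\Hom_{\sA}(E_m,K_l)$. Each column is exact, since $E_m$ projective makes $\Hom_{\sA}(E_m,-)$ exact and hence carries the acyclic $K$ to an acyclic complex; consequently the first page of the column-filtration spectral sequence vanishes. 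The only extra ingredient needed to conclude $H_\ast=0$ is convergence, and this is exactly what the boundedness of $E$ provides: the double complex is concentrated in columns $m\le N$, so the spectral sequence of the product total complex converges. This convergence is the heart of the matter and the sole place where the hypothesis enters; it cannot be dropped, as an unbounded acyclic complex of projectives need not be cofibrant. I therefore expect the careful accounting of this boundedness/convergence step to be the main obstacle, everything else being formal.
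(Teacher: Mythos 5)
Your argument is correct, and it is essentially the same proof the paper relies on: the paper gives no details here, simply citing Hovey's Lemma~2.3.6 and asserting that the module-case proof carries over, and what you have written is a correct, self-contained rendering of that standard lifting argument (graded lift by projectivity, defect cocycle into the acyclic kernel, kill it using acyclicity of $\Hom^{\bullet}_{\sA}(E,K)$). The one step you rightly single out --- acyclicity of $\Hom^{\bullet}_{\sA}(E,K)$ for $E$ bounded above and $K$ acyclic --- is best closed not by a loose appeal to ``convergence'' of the column-filtration spectral sequence (for the product total complex this is exactly the kind of statement that fails without the bound), but by the downward induction on the column index that the bound on $E$ makes possible, solving for the homotopy one degree at a time starting where $E$ vanishes; equivalently, a $\lim^1$ argument over the finite column-truncations. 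That induction is precisely the degreewise construction of the lift in Hovey's proof, so the two routes coincide in substance.
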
 
\begin{proof}
This is proved in \cite[Lemma 2.3.6]{Hovey} in the case when $\sA$ is the 
category of modules over a ring. The same proof goes through for any abelian 
category for which the projective model structure exists. 
\end{proof}

\vskip .3cm

Given a datum $(R, G, A)$ as above, let  $Ch^G_{cc}(A)$  denote the full 
subcategory of $Ch^G (A)$  consisting of chain complexes which are compact and 
cofibrant  (in projective model structure). For the notion of Waldhausen 
subcategories of a model category,  see \cite[\S~3]{DS}.

\begin{prop}\label{prop:Waldhausen-subcat}
Let $(R, G, A)$ be as in Proposition~\ref{prop:Freyd-I}.
Then there is an inclusion ${Sperf}^G(A) \inj Ch^G_{cc}(A)$ of Waldhausen 
subcategories of $Ch^G(A)$ such that the induced map on the $K$-theory
spectra is a homotopy equivalence.
\end{prop}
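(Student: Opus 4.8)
The plan is to show first that ${Sperf}^G(A)$ sits inside $Ch^G_{cc}(A)$ as an exact Waldhausen subcategory, and then that the inclusion induces an equivalence of the underlying derived categories, from which the $K$-theory equivalence follows by a Thomason--Trobaugh/Waldhausen approximation argument. Throughout I use that the hypotheses of Proposition~\ref{prop:Freyd-I} guarantee both the resolution property for $(R,G,A)$ (via Lemmas~\ref{lem:Res-Prp-D} and \ref{lem:Reductive}) and the existence of a set $\{P_\alpha\}$ of small, projective, strong generators of ($A$-$G$)-Mod; each $P_\alpha$ (of the form $A{\underset{R}\otimes}R_a$ or $A{\underset{k}\otimes}V_a$) is a finitely generated projective $A$-$G$-module, hence lies in ${Sperf}^G(A)$ concentrated in degree zero.

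First I would verify the inclusion. A strict perfect complex is a bounded complex of finitely generated projective $A$-$G$-modules; these are projective objects of ($A$-$G$)-Mod by Lemma~\ref{lem:Splitting lemma}, so by Lemma~\ref{lem:Perf-cofib} every strict perfect complex is cofibrant in the projective model structure on $Ch^G(A)$. Compactness follows because each generator $P_\alpha$ is small and projective, hence compact in $D^G(A)$, and the compact objects form a thick (in particular triangulated) subcategory; a bounded complex of finitely generated projectives is built from finitely many $P_\alpha$ by cones and retracts and is therefore compact. Thus ${Sperf}^G(A)\subseteq Ch^G_{cc}(A)$. Both are Waldhausen subcategories of $Ch^G(A)$ with cofibrations and weak equivalences inherited from the model structure, and the inclusion is exact; this is routine once closure of $Ch^G_{cc}(A)$ under cofibrations and pushouts along them, which preserve compactness and cofibrancy, is checked.

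The key step is to identify the homotopy categories. By Proposition~\ref{prop:Freyd-I} the equivalence ($A$-$G$)-Mod $\simeq \sR$-Mod for a ringoid $\sR$ induces an equivalence $D^G(A)\simeq D(\sR)$. The strong generation property says $\{P_\alpha\}$ detects vanishing, and Lemma~\ref{lem:Detection} exhibits every object of $Ch^G(A)$ as a filtered homotopy colimit of strict perfect complexes; together with compactness this shows that $D^G(A)$ is compactly generated by $\{P_\alpha\}$. By Neeman's theorem, the subcategory $D^G(A)^c$ of compact objects is the smallest thick subcategory containing $\{P_\alpha\}$. Now the essential image of ${Sperf}^G(A)$ in $D^G(A)$ is a triangulated subcategory containing every $P_\alpha$, and it is closed under retracts because a direct summand of a perfect complex is again perfect, reducing via the ringoid equivalence to the standard idempotent-completeness of perfect complexes; hence it is thick and equals $D^G(A)^c$. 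On the other hand the homotopy category of $Ch^G_{cc}(A)$ is precisely $D^G(A)^c$, since every compact object has a compact cofibrant representative. Therefore the exact inclusion ${Sperf}^G(A)\inj Ch^G_{cc}(A)$ induces an equivalence of derived categories.

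Finally, an exact functor of complicial biWaldhausen categories that induces an equivalence of associated derived categories induces a homotopy equivalence of Waldhausen $K$-theory spectra by \cite[Theorem~1.9.8]{TT} (equivalently, by the Waldhausen Approximation Theorem in the model-categorical form used in \cite{DS}); applying this to the inclusion above yields the homotopy equivalence $K^G(A)=K({Sperf}^G(A))\simeq K(Ch^G_{cc}(A))$. I expect the main obstacle to be the identification $D^G(A)^c={Perf}^G(A)$: establishing compact generation requires combining the strong generators of Proposition~\ref{prop:Freyd-I} with Lemma~\ref{lem:Detection}, and the retract-closure of the perfect complexes must be obtained by transporting the standard idempotent-completeness statement across the ringoid equivalence $D^G(A)\simeq D(\sR)$.
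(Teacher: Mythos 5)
Your proof is correct and follows essentially the same route as the paper: cofibrancy of strict perfect complexes via Lemma~\ref{lem:Perf-cofib}, their compactness, identification of the compact objects of $D^G(A)$ with the perfect complexes using the generators from Proposition~\ref{prop:Freyd-I}, Lemma~\ref{lem:Detection} and Neeman's theorem, and then a ``derived equivalence implies $K$-theory equivalence'' principle. The only divergences are cosmetic: the paper cites Keller for compactness where you build it directly from the generators by cones and retracts, and for the final step it invokes Blumberg--Mandell (which applies directly to Waldhausen subcategories of model categories) rather than \cite[Theorem~1.9.8]{TT}.
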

\begin{proof}
It follows from the results of \S~\ref{section:Proj-Obj} that ${Sperf}^G(A)$ 
is same as the category of bounded chain complexes of
finitely generated projective objects of ($A$-$G$)-Mod.
To check now that  ${Sperf}^G(A)$  and $Ch^G_{cc}(A)$  are Waldhausen 
subcategories of $Ch^G(A)$, we only need to check that they are closed under
taking push-outs. But this is true for the first category because every 
cofibration in $Ch^G(A)$ is  a term-wise split injection with projective
cokernels (see \cite[Theorem~2.3.11]{Hovey}) and this is true for the second 
category because of the well known fact that the cofibrations are closed
under push-out and if two vertices of a triangle in a triangulated category 
are compact, then so is the third.

To show that ${Sperf}^G(A)$ is a subcategory of $Ch^G_{cc}(A)$, we have to show 
that every object of ${Sperf}^G(A)$ is cofibrant and compact.
The first property follows from Lemmas~\ref{lem:Splitting lemma}, 
~\ref{lem:Reductive} and ~\ref{lem:Perf-cofib}.
To prove compactness, we can use Proposition~\ref{prop:Freyd-I} to replace  
($A$-$G$)-Mod by $\sR$-Mod, where $\sR$ is a ringoid.
But in this case, it is shown in \cite[\S~4.2]{Keller} that a bounded complex 
of finitely generated projective objects of $\sR$-Mod is compact.

To show that the inclusion ${Sperf}^G(A) \inj Ch^G_{cc}(A)$ induces homotopy 
equivalence of $K$-theory spectra, we can use 
\cite[Theorem~1.3]{BMandel} to reduce to showing that this inclusion induces 
equivalence of the associated derived subcategories of $D^G(A)$.
To do this, all we need to show is that every compact object of $D^G(A)$ is 
isomorphic to an object of  ${Sperf}^G(A)$. 
We have just shown above that every object of ${Sperf}^G(A)$ is compact. It 
follows now from Lemma~\ref{lem:Detection} and
\cite[Theorem~2.1]{Neeman} that every compact object of $D^G(A)$ comes from 
${Sperf}^G(A)$. Notice that we have shown in
Lemmas~\ref{lem:Res-Prp-D} and ~\ref{lem:Reductive} that the hypothesis of 
Lemma~\ref{lem:Detection} is satisfied in our case.
The proof of the proposition is now complete.
\end{proof}

\vskip .3cm

For $i = 1, 2$, let $R_i$ be a commutative noetherian ring,  $G_i$  an affine 
group scheme over $R_i$ and $A_i$ an $R_i$-$G_i$-algebra such that 
one of the following holds.
\begin{enumerate}
\item
$G_i$ is a diagonalizable group scheme over $R_i$.
\item
$R_i$ is a UFD containing a field of characteristic zero and $G_i$ is a split 
reductive group scheme over $R_i$.
\end{enumerate}
We are now ready to prove the main result of this section.

\begin{thm}\label{thm:DE-KE}
Let $(R_1, G_1, A_1)$ and $(R_2, G_2, A_2)$ be as above. Then $D^{G_1}(A_1)$ and 
$D^{G_2}(A_2)$ are equivalent as triangulated categories if and
only if $D^{G_1}({Perf}/{A_1})$ and $D^{G_2}({Perf}/{A_2})$ are equivalent as 
triangulated categories.
In either case, the following hold.
\begin{enumerate}
\item
There is a homotopy equivalence of spectra $K^{G_1}(A_1) \simeq K^{G_2}(A_2)$.
\item
There is a homotopy equivalence of spectra $K'_{G_1}(A_1) \simeq K'_{G_2}(A_2)$.
\end{enumerate}
\end{thm}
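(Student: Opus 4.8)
The plan is to transport the entire problem into the category of modules over a ringoid, where the general machinery of Dugger and Shipley applies verbatim. By \propref{prop:Freyd-I}, for each $i$ the abelian category $\sA_i := (A_i\text{-}G_i)$-Mod is equivalent to $\sR_i$-Mod for some ringoid $\sR_i$, and this equivalence carries finitely generated projective $A_i$-$G_i$-modules to finitely generated projective $\sR_i$-modules. Hence $D^{G_i}(A_i) \simeq D(\sR_i)$ as triangulated categories, and since $D^{G_i}({Perf}/{A_i})$ is by construction the bounded derived category of finitely generated projective $A_i$-$G_i$-modules, the equivalence identifies it with the bounded derived category of finitely generated projective $\sR_i$-modules. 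With these identifications fixed, the asserted biconditional is exactly the equivalence of conditions (2) and (3) in \thmref{thm:DS-3}, applied to $\sR_1$ and $\sR_2$: namely $D(\sR_1) \simeq D(\sR_2)$ holds if and only if the bounded derived categories of finitely generated projective $\sR_1$- and $\sR_2$-modules are triangulated equivalent. So the first assertion follows with no further argument.

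Assuming now that either (equivalent) form of the hypothesis holds, I would record that $D^{G_1}(A_1) \simeq D^{G_2}(A_2)$, i.e. $\sA_1$ and $\sA_2$ are derived equivalent. Both are cocomplete abelian categories carrying a set of small, projective, strong generators by \propref{prop:Freyd-I}, so \thmref{thm:DS-1}(2) produces a homotopy equivalence $K_c(\sA_1) \simeq K_c(\sA_2)$ between the Waldhausen $K$-theories of the compact objects of $Ch(\sA_i)$. To finish assertion (1) I would identify $K_c(\sA_i)$ with $K^{G_i}(A_i)$: by \lemref{lem:Detection} together with \cite[Theorem~2.1]{Neeman} the compact objects of $D^{G_i}(A_i)$ are precisely the perfect complexes, and \propref{prop:Waldhausen-subcat} shows that the inclusion ${Sperf}^{G_i}(A_i) \inj Ch^{G_i}_{cc}(A_i)$ induces a homotopy equivalence on $K$-theory spectra. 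Combining these gives $K_c(\sA_i) \simeq K^{G_i}(A_i)$ and hence (1).

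The $K'$-theory statement is where the real work lies, and it is the step I expect to be the main obstacle. The spectrum $K'_{G_i}(A_i)$ is built from \emph{all} finitely generated $A_i$-$G_i$-modules, not just the projective ones, so for non-regular $A_i$ the relevant complexes form a category strictly larger than the perfect complexes; a derived equivalence of the ambient big categories need not, a priori, restrict to the bounded derived categories of finitely generated objects. This is exactly the role of the generalization of Rickard's theorem (\propref{prop:Rick}): it guarantees that the equivalence $D(\sR_1) \simeq D(\sR_2)$ restricts to a triangulated equivalence of the bounded derived categories of finitely generated modules. Using the resolution property established in \lemref{lem:Res-Prp-D} and \lemref{lem:Reductive} together with the identification \eqref{eqn:G-theory}, this corresponds on the $A_i$-$G_i$ side to an equivalence of the categories $Ch^{hb,-}(\text{$A_i$-$G_i$-proj})$ computing $K'_{G_i}(A_i)$.

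To promote this derived statement to the level of $K$-theory spectra, the plan is to upgrade the derived equivalence to a zig-zag of Quillen equivalences between $Ch(\sA_1)$ and $Ch(\sA_2)$ via \thmref{thm:DS-3}(1), and then invoke the Waldhausen transfer of \thmref{thm:DS-2}, taking $\sU$ to be the complete Waldhausen subcategory of $Ch^{G_1}(A_1)$ corresponding to the bounded derived category of finitely generated modules. The key point to verify here — and precisely the place where Rickard's generalization does its work — is that this subcategory is carried by the composite derived functors of the Quillen equivalences onto the corresponding subcategory of $Ch^{G_2}(A_2)$, rather than merely into some abstract triangulated subcategory. Once that matching is confirmed, \thmref{thm:DS-2} together with \eqref{eqn:G-theory} yields $K'_{G_1}(A_1) \simeq K'_{G_2}(A_2)$, which establishes (2) and completes the proof.
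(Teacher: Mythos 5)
Your proposal is correct and follows essentially the same route as the paper: Proposition~\ref{prop:Freyd-I} plus Theorem~\ref{thm:DS-3} for the biconditional, Theorem~\ref{thm:DS-1} together with Proposition~\ref{prop:Waldhausen-subcat} for $K$-theory, and Proposition~\ref{prop:Rick} with Theorem~\ref{thm:DS-2} and ~\eqref{eqn:G-theory} for $K'$-theory. The only detail you leave implicit is the cofibrancy of the objects of $Ch^{hb,-}(\mbox{$A_i$-$G_i$-proj})$ (Lemma~\ref{lem:Perf-cofib}), which the paper records explicitly before invoking Theorem~\ref{thm:DS-2}.
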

\begin{proof}
It follows from Lemmas~\ref{lem:Splitting lemma} and ~\ref{lem:Reductive} that 
the derived categories of perfect complexes are same as the
bounded derived categories of finitely generated projective objects. The first 
assertion of the theorem is now an immediate consequence of
Proposition~\ref{prop:Freyd-I} and \thmref{thm:DS-3}.

If $D^{G_1}(A_1)$ and $D^{G_2}(A_2)$ are equivalent as triangulated categories, 
it follows from \thmref{thm:DS-1}, Proposition~\ref{prop:Freyd-I}
and Proposition~\ref{prop:Waldhausen-subcat} that there is a homotopy 
equivalence of spectra $K^{G_1}(A_1) \simeq K^{G_2}(A_2)$.

To prove (2), we first conclude from Proposition~\ref{prop:Freyd-I} and 
\thmref{thm:DS-3} that the equivalence of the derived categories is 
induced by  a zig-zag of Quillen equivalences between $Ch^{G_1}(A_1)$ and 
$Ch^{G_2}(A_2)$. It follows from Propositions~\ref{prop:Freyd-I} and 
~\ref{prop:Rick} that  this derived equivalence induces
an equivalence between the triangulated subcategories  
$D^{hb, -}(\mbox{$A_1$-$G_1$-proj})$ and $D^{hb, -}(\mbox{$A_2$-$G_2$-proj})$
of the corresponding derived categories.
It follows that this zig-zag of Quillen equivalences carries the Waldhausen 
subcategory 
$Ch^{hb, -}(\mbox{$A_1$-$G_1$-proj})$ of $Ch^{G_1}(A_1)$ onto the Waldhausen 
subcategory 
$Ch^{hb, -}(\mbox{$A_2$-$G_2$-proj})$ of $Ch^{G_2}(A_2)$. 
Furthermore, it follows from Proposition~\ref{prop:Freyd-I}  and
Lemma~\ref{lem:Perf-cofib} that the objects of 
$Ch^{hb, -}(\mbox{$A_1$-$G_1$-proj})$ 
and $Ch^{hb, -}(\mbox{$A_2$-$G_2$-proj})$  are cofibrant
objects for the projective model structure on the chain complexes. We can 
therefore apply \thmref{thm:DS-2} and ~\eqref{eqn:G-theory}
to conclude that there is a homotopy equivalence of spectra $K'_{G_1}(A_1)$ and 
$K'_{G_2}(A_2)$. This finishes the proof.
\end{proof}


\begin{remk}\label{remk:Fin-Grp-1}
If $G$ is a finite constant group scheme whose order is invertible in the base 
ring $R$, then one can check that the analogue of Theorem~\ref{thm:DE-KE} is a 
direct consequence of Remark~\ref{remk:Fin-Grp} and the main results of 
\cite{DS}.
\end{remk} 


\section{Appendix: Ringoid version of Rickard's theorem}
\label{section:Appendix}
In the proof of Theorem~\ref{thm:DE-KE}, we used  the following ringoid  
(see \S~\ref{section:Proj-Obj}) version of a theorem of Rickard 
(see \cite[Proposition~8.1]{Rickard}) for rings. 
We shall say that a ringoid $\sR$  is (right) coherent, if every submodule of 
a finitely generated (right) $\sR$-module is
finitely generated.  We say that $\sR$ is complete, if every $\sR$-module is a 
filtered direct limit of its finitely generated submodules.
We shall assume in our discussion that  the ringoids are complete and right 
coherent.
Given a ringoid $\sR$, we have the following categories: Mod-$\sR$ is the 
category of $\sR$-modules; mod-$\sR$ is the category of finitely generated 
$\sR$-modules; Free-$\sR$ (resp. free-$\sR$) is the
category of free (resp. finitely generated free) $\sR$-modules; Proj-$\sR$ 
(resp. proj-$\sR$) is the category of projective
(resp. finitely generated projective) $\sR$-modules. Let $Ch(-)$ denote the 
category of chain complexes and $D(-)$ denote the derived category of 
unbounded chain complexes. The superscripts $``-", ``b", ``hb"$ denote the 
full subcategories of bounded above, bounded and cohomologically bounded chain 
complexes, respectively. $D(\mbox{Mod-}\sR)$ is denoted by $D(\sR)$.

Since every bounded above complex of finitely generated projective 
$\sR$-modules has a resolution by  a bounded above complex of finitely
generated free modules, we see that there are equivalences of subcategories 
$D^{-}(\mbox{free-}{\sR}) \simeq  D^{-}(\mbox{proj-}{\sR})$ and 
$D^b(\mbox{mod-}{\sR}) \simeq D^{hb, -}(\mbox{proj-}{\sR})$.
We shall say that two ringoids $\sR$ and $\sS$ are derived equivalent, if 
there is an equivalence $D(\sR) \simeq D(\sS)$ of triangulated categories.
We shall say that a set $\T$ of objects in $D^{b}(\mbox{proj-}{\sR})$ is a set 
of {\sl tiltors}, if it generates $D(\sR)$ and
$\Hom_{D(\sR)}(T,  T'[n])) = 0$ unless $n = 0$ for any $T, T' \in \T$.

\begin{prop}\label{prop:Rick}
Let $\sR$ and $\sS$ be ringoids which are  derived equivalent. Then  
$D^{hb, -}(\mbox{proj-}{\sR})$ and $D^{hb, -}(\mbox{proj-}{\sS})$ are equivalent 
as triangulated categories.
\end{prop}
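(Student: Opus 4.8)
The plan is to transcribe Rickard's argument for rings (\cite[Proposition~8.1]{Rickard}) to the ringoid setting, with the representable projectives $\{H_A \mid A \in {\rm Obj}(\sR)\}$ playing the role of the indecomposable projective summands of a ring. First I would fix a triangulated equivalence $F \colon D(\sR) \xrightarrow{\simeq} D(\sS)$. Since $F$ is an equivalence it commutes with arbitrary coproducts and therefore preserves compact objects; under the standing coherence and completeness hypotheses the compact objects of $D(\sR)$ are, up to isomorphism, exactly the perfect complexes $D^{b}(\mbox{proj-}{\sR})$ (the $H_A$ form a set of compact generators, so this is the usual compact-generation argument). Thus $F$ restricts to an equivalence $D^{b}(\mbox{proj-}{\sR}) \simeq D^{b}(\mbox{proj-}{\sS})$. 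Moreover $\T = \{H_A\}$ is a set of tiltors for $\sR$, and since $F$ preserves compactness, generation, and the vanishing $\Hom_{D(\sR)}(H_A, H_{A'}[n]) = 0$ for $n \neq 0$, the image $\T' := \{F(H_A)\}$ is a set of tiltors for $\sS$.

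Next I would use $\T'$ to put the equivalence into standard (tilting) form. Let $\sE$ be the endomorphism ringoid whose objects are those of $\sR$ and whose morphisms are $\sE(A, A') := \Hom_{D(\sS)}(F(H_A), F(H_{A'}))$; the tiltor property forces these groups to be concentrated in degree $0$, so $\sE$ is an honest ringoid, and $F$ induces an isomorphism $\sE \cong \sR$. One should note here that $F$ need not preserve the standard $t$-structure, since the $F(H_A)$ are perfect complexes rather than representables; this is precisely why the tilting reformulation, rather than a naive restriction of the standard truncation functors, is required. The ringoid analogue of Rickard's tilting theorem, applied to $\sS$ together with the compact, generating tiltor set $\T'$, then yields a triangulated equivalence $D(\sS) \simeq D(\sE) \simeq D(\sR)$ that is realized by the tilting object and is compatible with $F$. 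In other words, $F$ (or its quasi-inverse) is identified with a standard functor of the form $\operatorname{RHom}_{\sS}(T', -)$, respectively $- \otimes^{L} T'$, where $T'$ is the perfect complex assembled from $\T'$.

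Finally I would show that this standard functor preserves the bounded-finitely-generated subcategory. Because $T'$ is a perfect complex over the right coherent ringoid $\sS$, applying $\operatorname{RHom}_{\sS}(T', -)$ (resp. $- \otimes^{L} T'$) to a bounded-above complex of finitely generated projectives with bounded cohomology again produces such a complex: coherence keeps every cohomology module finitely generated, completeness allows one to compute the derived functors as filtered colimits of finitely generated subobjects, and the boundedness of $T'$ controls the cohomological amplitude. Hence $F$ restricts to an equivalence $D^{hb,-}(\mbox{proj-}{\sR}) \simeq D^{hb,-}(\mbox{proj-}{\sS})$, which is the assertion (recall the equivalence $D^{b}(\mbox{mod-}{\sR}) \simeq D^{hb,-}(\mbox{proj-}{\sR})$ noted above).

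The step I expect to be the main obstacle is the passage to standard form together with the verification that the derived tilting functors preserve finite generation of cohomology. The purely ringoid bookkeeping — reading ``module'' as ``additive presheaf on $\sR$'' and using $\{H_A\}$ as compact projective generators — is routine, and the reductions to the compact subcategory are formal. The real content, exactly as in the ring case, is that the coherence and completeness assumptions are precisely what guarantee $\operatorname{RHom}(\text{perfect}, D^{b}(\mbox{mod})) \subseteq D^{b}(\mbox{mod})$; checking this in the enriched setting is where the work lies.
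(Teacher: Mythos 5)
Your overall strategy --- transport Rickard's argument to ringoids, using the images of the representables $H_A$ as a set of tiltors whose endomorphism ringoid recovers the other ringoid --- is the same as the paper's, and the first half of your argument (compacts are the perfect complexes, so the equivalence restricts to $D^{b}(\mbox{proj-}{\sR}) \simeq D^{b}(\mbox{proj-}{\sS})$, and the images of the representables form a tiltor set) matches what is done there. But the step you yourself flag as the main obstacle is a genuine gap, and it is exactly the point where the paper argues differently. You propose to realize the equivalence as $\operatorname{RHom}_{\sS}(T',-)$ and to check that it carries $D^{hb,-}(\mbox{proj-}{\sS}) \simeq D^{b}(\mbox{mod-}{\sS})$ into $D^{b}(\mbox{mod-}{\sR})$ by appealing to coherence and completeness. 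The issue is that the degree-$n$ cohomology of $\operatorname{RHom}_{\sS}(T',M)$, evaluated at an object $A$ of $\sR$, is $\Hom_{D(\sS)}(T'_A, M[n])$, and what must be shown is that this is finitely generated \emph{as a module over the endomorphism ringoid} $\sR \simeq \operatorname{End}(T')$ --- not merely that each such abelian group is assembled from finitely generated $\sS$-modules. Coherence of $\sS$ gives the latter but says nothing directly about the former; ``computing derived functors as filtered colimits of finitely generated subobjects'' does not produce a finite generating set over $\sR$. As written, the crux of the proposition is asserted rather than proved. (The claim that the standard tilting equivalence is ``compatible with $F$'' is also unjustified --- whether every derived equivalence is standard is a well-known open problem --- but that is harmless here, since any equivalence of the subcategories suffices.)

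The paper sidesteps the finiteness problem by two moves you do not make. First, it observes that cohomological boundedness is automatically preserved by any equivalence, because $X \in D^{hb}(\mbox{Mod-}{\sR})$ if and only if $\Hom_{D(\sR)}(A, X[n]) = 0$ for all but finitely many $n$ for every compact $A$; since $D^{hb,-}(\mbox{proj-}{\sR}) = D^{-}(\mbox{proj-}{\sR}) \cap D^{hb}(\mbox{Mod-}{\sR})$, the whole proposition reduces to showing $D^{-}(\mbox{proj-}{\sR}) \simeq D^{-}(\mbox{proj-}{\sS})$. Second, for that statement it runs Rickard's construction in the ``realization'' direction: the functor is built by rectifying a bounded-above bigraded object whose rows are finite direct sums of objects of the tiltor set $S$ into a double complex and totalizing. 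Since each object of $S$ is a bounded complex of finitely generated projective $\sR$-modules, the total complex is visibly a bounded-above complex of finitely generated projective $\sR$-modules, and no coherence argument is needed. To repair your route, either prove the $\operatorname{RHom}$ finiteness statement directly (which is genuinely delicate) or switch to the $- \otimes^{L}_{\sR} T'$ direction combined with the compact-detection reduction above.
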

\begin{proof}
Any equivalence of triangulated categories $D(\sR)$ and $D(\sS)$ induces an 
equivalence of its compact objects 
and hence induces an equivalence between $D^{hb}(\mbox{Mod-}{\sR})$ and 
$D^{hb}(\mbox{Mod-}{\sS})$ because an object $X$ of $D(\sR)$ is in 
$D^{hb}(\mbox{Mod-}{\sR})$ if and only if for every compact object $A$, one has 
$\Hom_{D(\sR)}(A, X[n]) = 0$ for all but finitely many $n$.
Since $ D^{hb, -}(\mbox{proj-}{\sR}) = D^{-}(\mbox{proj-}{\sR}) \cap 
D^{hb}(\mbox{Mod-}{\sR})$, the proposition is about showing that
the triangulated categories $D^{-}(\mbox{proj-}{\sR})$ and 
$D^{-}(\mbox{proj-}{\sS})$ are equivalent.

This result was proven by Rickard  (see \cite[Proposition~8.1]{Rickard}) when 
$\sR$ and $\sS$ are both rings. We only  explain here how the proof of Rickard 
goes through even for ringoids without further changes.
The completeness assumption and our hypothesis together imply  that the 
triangulated categories $ D^{-}(\mbox{Proj-}{\sR})$ and
$D^{-}(\mbox{Proj-}{\sS})$ are equivalent.  It follows from 
\cite[Theorem~7.5]{DS} that this induces an equivalence
of triangulated subcategories $D^{b}(\mbox{proj-}{\sR})$ and 
$D^{b}(\mbox{proj-}{\sS})$.
Let $S$ denote the set of the images of the objects of $\sS$ 
(the representable objects of $\sS$-Mod) under this equivalence and let
$\sT := {\rm End}(S)$ denote the full subcategory of $D^b(\mbox{proj-}{\sR})$ 
consisting of objects in $S$.
One easily checks that $S$ is a set of tiltors  such that 
${\rm End}(S) \simeq \sS$ as ringoids (see \cite[Theorem~7.5]{DS}).

Rickard constructs (in case of rings) a functor 
$F: D^{-}(\mbox{Proj-}{\sT}) \to  
D^{-}(\mbox{Proj-}{\sR})$ of triangulated categories which is an equivalence
and shows that it induces equivalence between $D^{-}(\mbox{proj-}{\sT})$ and 
$D^{-}(\mbox{proj-}{\sR})$. We recall his construction which works for ringoids
as well. The functor $\Hom_{D(\sR)}(\sT, -)$ from $D^{-}(\mbox{Proj-}{\sR})$ to 
$\sT$-Mod induces an
equivalence between the direct sums of objects of $\sT$ and free objects of 
$\sT$-Mod.
Moreover, the completeness assumption on $\sS$ implies that the inclusion 
$Ch^{-}(\mbox{Free-}{\sT}) \to Ch^{-}(\mbox{Proj-}{\sT})$ induces an 
equivalence of their homotopy categories.  One is thus reduced to constructing a
functor from the category $D^{-}(\mbox{Free-}{\sT})$ of bounded above chain 
complexes of
direct sums of copies of objects in $S$ to $D^{-}(\mbox{Proj-}{\sR})$ with 
requisite properties.

An object $X$ of $D^{-}(\mbox{Free-}{\sT})$ consists of a bigraded object 
$X = (X^{**}, d, \delta)$ of projective $\sR$-modules such that each row is a  
chain complex of objects which are direct sums of objects in $S$ but the 
columns are not necessarily chain complexes.
The goal is then to modify the differentials of $X^{**}$ such that it becomes 
a double complex and then one defines $F(X)$ to be the total complex
of $X^{**}$ which is an object of $D^{-}(\mbox{Proj-}{\sR})$.

In order to modify the differentials of $X^{**}$, Rickard uses his Lemma~2.3 
whose proof works in the ringoid case if we know that
$\Hom_{D(\sR)}(T,  T'[n]) = 0$ unless $n = 0$ for any $T, T' \in S$. But this 
is true in our case as $S$ is a set of tiltors. The rest of \S~2
of {\sl loc. cit.} shows how one can indeed modify $X^{**}$ to get a double 
complex under this assumption.
The point of other sections is to show how this yields an equivalence of 
triangulated categories, which only uses the requirement that
$S$ is a set of tiltors and in particular, it generates $D^b({proj-}{\sR})$ 
and hence $D(\sR)$.

Finally, the functor $F$ will take  $D^{-}(\mbox{proj-}{\sT})$ to  
$D^{-}(\mbox{proj-}{\sR})$ if $F({\rm Tot}(X^{**}))$ is a bounded above complex 
of finitely generated projective $\sR$-modules whenever each row of $X^{**}$ 
is a finite direct sum of objects in $S$. But this is obvious because each
object of $S$ is a bounded complex of finitely generated projective 
$\sR$-modules. 
\end{proof}

\enlargethispage{25pt}

\noindent\emph{Acknowledgements.} 
AK would like to thank the Mathematics department of KAIST, Korea for 
invitation and support where part of this work was carried out.
The authors would like to thank the
referees for carefully reading the paper and suggesting many
improvements.

\end{document}